

\documentclass[12pt]{article}
\usepackage{mathtools}
\usepackage{appendix}

\usepackage{amsmath,latexsym,amsfonts,enumerate}
\usepackage{amssymb,amsthm}
\usepackage{calrsfs} 
\DeclareMathAlphabet{\pazocal}{OMS}{zplm}{m}{n}
\usepackage{tikz-cd}
\usepackage{tikz}
\usepackage{comment}

\usepackage[LGR,T1]{fontenc} 
\usepackage[utf8]{inputenc} 
\usepackage{lmodern} 
\usepackage[greek,english]{babel} 
\usepackage{alphabeta} 
\usepackage{biblatex}
\usepackage{csquotes}
\usepackage{cleveref}
\usepackage{pgfplots}
\usepackage{tikz-3dplot}

\tdplotsetmaincoords{60}{115}
\pgfplotsset{compat=newest}

\addbibresource{bibliography.bib}

\newtheorem{theor}{Theorem}[section]
\newtheorem{lem}[theor]{Lemma}
\newtheorem{prop}[theor]{Proposition}
\newtheorem{cor}[theor]{Corollary}

\newtheorem{notation}[theor]{Notation}
\newtheorem{rem}[theor]{Remark}

\def\N{\mathbb{N}}
\def\R{\mathbb{R}}


\DeclareMathOperator{\dis}{{\mathbf d}}

\DeclareMathOperator{\Ran}{Ran}
\DeclareMathOperator{\Span}{span}

\DeclareMathOperator{\Trun}{\downarrow}

\DeclareMathOperator{\rank}{rank}

\DeclareMathOperator{\Lip}{Lip}

\newcommand{\ignore}[1]{}

\newcommand{\ip}[2]{{\langle#1,#2\rangle}}
\DeclarePairedDelimiter{\norm}{\lVert}{\rVert}

\newcommand{\Vc}{{\mathcal{V}}}
\newcommand{\bw}{{\textbf{w}}}

\newcommand{\wb}{{\mathbf w}}
\newcommand{\Vh}{{\widehat{V}}}
\newcommand{\dd}{\dis}

\title{G-Invariant Representations using Coorbits: Bi-Lipschitz Properties}
\author{{\bf Radu Balan, Efstratios Tsoukanis} \\
Department of Mathematics \\ University of Maryland \\ College Park, MD 20742 \\email: rvbalan@umd.edu efstratios.tsoukanis@cgu.edu}

\begin{document}

\maketitle

\begin{abstract}
	Consider a finite dimensional real vector space and a finite group acting unitarily on it. We study the general problem of constructing Euclidean stable embeddings of the quotient space of orbits. Our embedding is based on subsets of sorted coorbits. Our main result shows that, whenever such embeddings are injective, they are automatically bi-Lipschitz. Additionally, we demonstrate that stable embeddings can be achieved with reduced dimensionality, and that any continuous or Lipschitz $G$-invariant map can be factorized through these embeddings.

\end{abstract}
\section{Introduction}

In a lot of machine learning problems we want to embed our data into an Euclidean space $\mathbb{R}^m$ using a symmetry-invariant embedding $\Psi$ and utilize $\mathbb{R}^m$ as our feature space. This embedding $\Psi$ should also separate data orbits and satisfy  certain stability conditions to ensure that small perturbations of the input do not significantly impact the predictions. We worked in the orbit separation problem in \cite{balan2023inj}. In this paper we focus on the stability problem.

This problem is an instance of \emph{invariant machine learning}.
\cite{mixon2022injectivity,aslan2023group,dym2022low,bronstein,maron2018invariant,DUFRESNE20091979,cahill2023bilipschitz,derksen2024bi}. 

The most common group actions in invariant machine learning are permutations
\cite{sannai2020universal,cahill22,balan2022permutation}
reflections \cite{mixon2022max} and  translations \cite{Cahill19}.
A related, by somewhat different problem is the case of equivariant embeddings \cite{lipman2022,maron2018invariant,villar2021scalars,villar2022dimensionless,blum2022equivariant}.

The phase retrieval that was introduced in \cite{balan2004signal} is an instance of this setup, when the compact group is the torus. \cite{balwan,balan16,balan06,alaifari2019stable,freeman2024stable,alaifari2021stability}.

Our work extends and unifies two prior approaches: the \emph{max filter} embedding introduced in  \cite{cahill22,mixon2022injectivity}, and the \emph{permutation invariant representation} introduced in \cite{balan2022permutation}. 
 \cite{amir2025stability} analyzes the stability of generalized phase retrieval problems under the action of compact groups, providing conditions under which stable recovery is possible from invariant measurements.
The construction of permutation invariant embeddings is  closely connected to the phase retrieval problem \cite{balan2004signal,bcmn} that has a large body of results. 
For instance, \cite{balan16} provides exact estimates for both the upper and lower Lipschitz bounds, addressing both the real and complex cases of the phase retrieval problem. In \cite{Balan2023Rel}  we establish an isometric identification of the real phase retrieval problem to $S_2$-invariant representations.

A completely different approach is considered in \cite{eriksson2018quantitative}. There it is proved that for any discrete group $G$ of isometries acting on $\mathbb{R}^d$, it is possible to construct a bi-Lipschitz map from $\mathbb{R}^d/G$ to $\mathbb{R}^N$, where the distortion depends solely on $d$. 

Since the first draft of this paper was placed on arxiv, the authors of \cite{qaddura2024stable} extended the construction of this paper and proved that for certain compact groups, given enough generic templates, the coorbit filter bank (as defined there) is injective and bi-Lipschitz. 

In this paper we construct an Euclidean embedding that is globally bi-Lipschitz and can be implemented relatively easy.
Our paper is organized as follows:
Chapter 1 introduces the embedding map. Chapter 2 explores the upper Lipschitz bound for the proposed embeddings and establishes the equivalence between injectivity on the quotient space and stability. Chapter 3 demonstrates that a generic linear projection can be used to reduce the dimension of the target space while preserving both injectivity and stability. 
Finally, chapter 4 presents universal factorization results for arbitrary continuous or Lipschitz $G$-invariant maps.

\subsection{Notation}
In this paper we use the same notation as in \cite{balan2023inj}.
Let $(\Vc,\ip{\cdot}{\cdot} )$ be a $d$-dimensional real vector space with scalar product, and $d\geq 2$.
Assume $(G, \cdot)$ is a finite group of order $|G|=N$ acting unitarily on $\Vc$.
For every $g \in G$, we denote by $U_gx$ the group action on vector $x\in\Vc$.
Let $\hat{\Vc}=\Vc/\sim$ denote the quotient space with respect to this group action,  $x\sim y$ if and only if $y=U_gx$ for some $g\in G$. 
 We denote by $[x]$ the orbit of vector $x$, i.e. $[x]= \{U_gx :g \in G\}$. 
 The natural metric, $\dis: \hat{\Vc} \times \hat{\Vc} \to \R$, is defined by
\begin{equation}    
\dis([x],[y]) = \min_{h_1,h_2 \in G} \norm{U_{h_1}x-U_{h_2}y} = \min_{g \in G} \norm{x-U_gy}.
\end{equation}
Note $(\hat{\Vc},\dis)$ is a complete metric space.

Our goal is to construct a bi-Lipschitz Euclidean embedding of the metric space $(\hat{\Vc},\dis)$ into an Euclidean space $\R^m$.

Specifically, we want to construct a function $\Psi: \Vc \to \R^m$ such that
\begin{enumerate}
	\item  $\Psi(U_gx)=\Psi(x),\ \forall x \in \Vc,\ \forall g \in G$,\label{property1}
	\item  If $x,y \in \Vc$ are such that $\Psi(x)=\Psi(y)$, then there exist $g \in G$ such that $y=U_gx$,\label{property2}
	\item  There are $0<a<b<\infty$ such that for any $x,y \in \Vc$
	\[a \dis([x],[y])^2 \le\norm{\Psi(x)-\Psi(y)}^2\le b (\dis([x],[y]))^2.\]\label{property3}
\end{enumerate}   

The invariance property \eqref{property1} allows to lift $\Psi$ to a map $\hat{\Psi}$ acting on the quotient space $\hat{\Vc}=\Vc/\sim$:
\[ \hat{\Psi}:\hat{\Vc}\rightarrow\R^m,\quad \hat{\Psi}([x])=\Psi(x), \quad\forall [x]\in\hat{\Vc}. \]
If a $G$-invariant map $\Psi$ satisfies property \eqref{property2} we say that $\Psi$ {\em separates} $G$-orbits in $\Vc$.

Our construction for the embedding $\Psi$ is based on the non-linear sorting map $\Trun$ described next.

\begin{notation}
	$\Trun:\R^r \to \R^r$ denoted the operator that takes as input a vector in $\R^r$ and returns a monotonically decreasing sorted  vector of same length $r$ that has same entries as the input vector:
 \[ x\in\R^r \mapsto \Trun x = (x_{\sigma(i)})_{1\leq i\leq r}~~,~~x_{\sigma(1)}\geq\cdots\geq x_{\sigma(r)} \]
 for some permutation $\sigma:\{1,\ldots,r\}\rightarrow\{1,\ldots,r\}$.
\end{notation}

For an integer $p \in \N$, we denote $[p]=\{1,2,\ldots,p\}$. For a set $S$, $|S|$ denotes its cardinal. Fix a $p$-tuple of vectors $\bw= (w_1,\dots,w_p)\in \Vc^p$. For any  $i \in [p]$ and $j \in [N]$ we define the operator $\Phi_{w_i,j}: \Vc \to \R$ so that $\Phi_{w_i,j}(x)$ is the $j$-th coordinate of the sorted vector $\Trun(\ip{U_gw_i}{x})_{g\in G }$.
Fix a set $S \subset [N] \times [p]$ such that $|S|=m$, and for $i \in [p]$, let $S_i=\{k \in [N]: (k,i)\in S\}$ (the $i^{th}$ column of $S$). Denote by $m_i$ the cardinal of the set $S_i$, $m_i=|S_i|$. Thus $m= \sum_{i=1}^p m_i$.  
\begin{notation}
    The coorbit embedding $\Phi_{\bw,S}$ associated to windows $\bw\in\Vc^p$ and index set $S\subset [N] \times [p]$ is given by the map
\begin{equation}
    \label{emb}
\Phi_{\bw,S}:\Vc\rightarrow\R^m~,~
\Phi_{\bw,S}(x)=[\{\Phi_{w_1,j}(x)\}_{j \in S_1},\dots,\{\Phi_{w_p,j}(x)\}_{j \in S_p}] \in \R^m.
\end{equation}
\end{notation}

\begin{figure}

    \begin{tikzpicture}[]
    \draw[black, very thick] (0,0) rectangle (4,4)[label=right: {$\Vc=\R^d$}] {}; 
    \draw[black, very thick] (6,0) rectangle (9,4);
    \draw (2,1.8) node (yaxis) [below] {$[y]$};
     \draw  (2,3.3) node (yaxis) [below] {$[x]$};
     \draw  (2,-0.3) node (yaxis) [below] {$\hat{\Vc}$};
     \draw  (7.5,-0.3) node (yaxis) [below] {$\R^m$};
     \draw  (12.5,-0.3) node (yaxis) [below] {$\R^{q}$};
     \draw  (5,2.9) node (yaxis) [below] {$\Phi_{\bw,S}$};
     \draw  (5,1.9) node (yaxis) [below] {$\Phi_{\bw,S}$};
     \draw  (9.8,3.23) node (yaxis) [below] {$\ell$};
     \draw  (9.8,2.27) node (yaxis) [below] {$\ell$};
      \draw  (7.8,3.1) node (yaxis) [above] {$\Phi_{\bw,S}([x])$};
     \draw  (7,2.1) node (yaxis) [above] {$\Phi_{\bw,S}([y])$};
      \draw  (11.9,1) node (yaxis) [above] {$\ell \circ \Phi_{\bw,S}([y])$};
     \draw  (11.8,2.7) node (yaxis) [above] {$\ell \circ \Phi_{\bw,S}([x])$};
     \draw[black, very thick] (10.3,0) rectangle (13.5,4);
 \draw[red, thick, dotted, scale=1,domain=0:3,smooth]
  plot[parametric,id=parametric-example] function{sqrt(9-t*t),t};
 \draw[green, thick, dotted, scale=1,domain=0:4,smooth]
  plot[parametric,id=parametric-example] function{sqrt(16-t*t),t};
    \foreach \Point in {(8,3)  }{
    \node[green] at \Point {\textbullet};}
      \foreach \Point in {(7,2)  }{
    \node[red] at \Point {\textbullet};}
    \draw[dashed,->] (2.77,3) -- (7.8,3) [label=right: {$\Vc=\R^d$}] {};;
    \draw[dashed,->] (2.23,2) -- (6.8,2);
    \foreach \Point in {(11.8,2.6)  }{
    \node[green] at \Point {\textbullet};}
      \foreach \Point in {(12,1.6)  }{
    \node[red] at \Point {\textbullet};}
     \draw[dashed,->] (8.1,3) --(11.6,2.6) ;
    \draw[dashed,->] (7.1,2) --(11.9,1.63)  ;
    \end{tikzpicture}
\caption{Proposed embedding scheme}
\label{fig1}
\end{figure}

Let $\ell: \R^{m} \to \R^{q}$  be a linear map.
\begin{notation}
The embedding $\Psi_{\bw,S,\ell}$ associated to windows $\bw\in\Vc^p$, index set $S\subset [N] \times [p]$ and linear map $\ell:\R^m\rightarrow\R^q$ is given by the map
\begin{equation}
 \Psi_{\bw,S,\ell}=\ell \circ \Phi_{\bw,S} :\Vc \to \R^{q}~,~\Psi_{\bw,S,\ell}(x)=\ell(\Phi_{\bw,S}(x))
\end{equation}
obtained by composition of $\ell$ with the coorbit embedding $\Phi_{\bw,S}$.   
\end{notation}

 In this paper we focus on stability and universality properties of maps $\Phi_{\bw,S}$ and $\Psi_{\bw,S,\ell}$. 

Informally, our main results, \Cref{theormainstab}, 
 \Cref{dimred}, \Cref{LIPTHEOR} and \Cref{cont}, state that: (1) "injectivity" implies "(bi-Lipschitz) stability", (2) stable bi-Lipschitz embedding can be achieved into an Euclidean space of dimension at most twice the dimension of the input space; and (3) any continuous or even Lipschitz map factors through $\Phi_{\bw,S}$. 

\vspace{5mm}

For the rest of the paper we shall use interchangeably $\Phi_{i,j}$ instead of $\Phi_{w_i,j}$. 



\subsection{Main results}

In this section, we summarize the key results of this paper, that focus on bi-Lipschitz stability of embeddings, dimensionality reduction, and universality of \( G \)-invariant maps.

\begin{theor}\label{theormainstab}
	Let $G$ be a finite group acting unitarily on the vector space $\Vc$.  For fixed $\bw \in \Vc^p$ and $S\subset [N]\times [p]$, where $|S|=m$, suppose that the map $\hat{\Phi}_{\bw,S}: \hat{\Vc} \to \R^m$ is injective on the quotient space $\hat{\Vc}=\Vc/G$. Then, there exist $0<a \le b<\infty$ such that for all $x,y \in \Vc$, 
 \[a\dis([x],[y]) \le \norm{\Phi_{\bw,S}(x)-\Phi_{\bw,S}(y)}_{2}\le b\dis([x],[y]).\]
\end{theor}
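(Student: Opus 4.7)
The plan is to prove the upper and lower Lipschitz bounds separately. The upper bound follows from standard facts about the sorting operator combined with the unitarity of the $U_g$; the lower bound combines a homogeneity-based compactness reduction with a local analysis at the diagonal that exploits the piecewise-linear structure of $\Phi_{\bw,S}$.

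For the upper bound, the sorting operator $\Trun\colon\R^N\to\R^N$ is $1$-Lipschitz in $\ell_2$ (the classical rearrangement inequality), and the linear map $x\mapsto(\ip{U_gw_i}{x})_{g\in G}$ has operator norm at most $\sqrt{N}\,\|w_i\|$ since each $U_g$ is unitary. Composing these observations and summing over $i\in[p]$ yields
\[\|\Phi_{\bw,S}(x) - \Phi_{\bw,S}(y)\|_2 \le b_0\,\|x-y\|,\qquad b_0 := \sqrt{N\sum_{i=1}^p \|w_i\|^2}.\]
Since $\Phi_{\bw,S}$ is $G$-invariant, I may replace $y$ with any $U_gy$ on the right and minimize over $g$, producing $\|\Phi_{\bw,S}(x) - \Phi_{\bw,S}(y)\|_2 \le b_0\,\dis([x],[y])$.

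For the lower bound, set $a := \inf\{\|\Phi_{\bw,S}(x) - \Phi_{\bw,S}(y)\|_2/\dis([x],[y]) : [x]\neq [y]\}$ and suppose for contradiction that $a=0$. Both the numerator and the denominator are positively $1$-homogeneous in $(x,y)$, so the ratio is invariant under $(x,y)\mapsto(\lambda x,\lambda y)$ for $\lambda>0$ and a minimizing sequence may be placed on the compact sphere $\|x\|^2+\|y\|^2=1$. Extracting a convergent subsequence $(x_n,y_n)\to(x_*,y_*)$: if $[x_*]\neq[y_*]$, continuity of $\Phi_{\bw,S}$ and $\dis$ forces $\Phi_{\bw,S}(x_*)=\Phi_{\bw,S}(y_*)$ while $\dis([x_*],[y_*])>0$, contradicting injectivity of $\hat{\Phi}_{\bw,S}$. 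Otherwise $[x_*]=[y_*]$, i.e.\ $y_*=U_g x_*$ for some $g\in G$; replacing $y_n$ by $U_g^{-1}y_n$ (which leaves both $\Phi_{\bw,S}(y_n)$ and $\dis([x_n],[y_n])$ unchanged) I may assume $y_*=x_*$.

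This remaining diagonal case is the main obstacle. The key structural fact is that $\Phi_{\bw,S}$ is piecewise linear: the orderings of $\{\ip{U_gw_i}{\cdot}\}_{g\in G,\,i\in[p]}$ cut $\Vc$ into finitely many closed polyhedral cones $C_\alpha$ on each of which $\Phi_{\bw,S}$ coincides with a linear map $L_\alpha$. Taking $y_n$ to be the nearest representative of $[y_n]$ to $x_n$ (so $\epsilon_n:=\|x_n-y_n\|=\dis([x_n],[y_n])$) and passing to subsequences so that $x_n\in C_\alpha$, $y_n\in C_\beta$ and $x_*\in C_\alpha\cap C_\beta$, I introduce the scale $\rho_n:=\max(\|x_n-x_*\|,\|y_n-x_*\|)$ and the renormalized limits $u_n:=(x_n-x_*)/\rho_n\to u_*$ and $v_n:=(y_n-x_*)/\rho_n\to v_*$ with $\max(\|u_*\|,\|v_*\|)=1$. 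Because the minimizing $h\in G$ in $\dis([x_n],[y_n])$ must stabilize $x_*$ once $\rho_n$ is sufficiently small, $\epsilon_n/\rho_n\to\mu:=\min_{g\in\mathrm{Stab}(x_*)}\|u_*-U_gv_*\|$. When $\mu>0$, vanishing of the original ratio forces $L_\alpha(u_*)=L_\beta(v_*)$ and hence $\Phi_{\bw,S}(x_*+tu_*)=\Phi_{\bw,S}(x_*+tv_*)$ for all small $t\ge 0$; injectivity together with the finiteness of $G$ then produces $v_*=U_gu_*$ for some $g\in\mathrm{Stab}(x_*)$, contradicting $\mu>0$. The hard subcase is $\mu=0$, i.e.\ $\epsilon_n\ll\rho_n$: here the first-order limits $(u_*,v_*)$ collapse into a single direction and a secondary blow-up of $y_n-x_n$ at the finer scale $\epsilon_n$ is required to recover enough linear information to reach the analogous contradiction. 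Reconciling these two scales inside a common piecewise-linear picture at $x_*$ is the main obstacle of the argument.
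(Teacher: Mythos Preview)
Your upper bound is fine (and actually simpler than the paper's path-integral argument, which produces a sharper constant). Your compactness/homogeneity reduction to the diagonal case $y_*=x_*$ is also essentially the paper's Lemma~2.8. The $\mu>0$ subcase is likewise handled correctly: your argument that $x_*+tu_*\in C_\alpha$ for small $t\ge 0$ works because the $C_\alpha$ are closed polyhedral cones, and the conclusion $v_*=U_gu_*$ with $g\in\mathrm{Stab}(x_*)$ follows as you indicate.

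The genuine gap is the $\mu=0$ case, which you explicitly leave open (``Reconciling these two scales \ldots\ is the main obstacle of the argument''). A single secondary blow-up at scale $\epsilon_n$ is not enough: the same degeneracy can recur, with the rescaled difference $(y_n-x_n)/\epsilon_n$ again collapsing onto a $\mathrm{Stab}$-orbit, forcing a tertiary blow-up, and so on. Nothing in your outline bounds the depth of this recursion, so as written the argument does not terminate.

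The paper's resolution is an inductive ``robotic arm'' construction (Lemmas~2.6--2.11). Starting from $z_1=x_*$, each time the blow-up collapses one extracts a new nonzero direction $z_{k+1}$ \emph{orthogonal} to $\mathrm{span}(z_1,\ldots,z_k)$ at which the local lower Lipschitz constant still vanishes; the delicate part is controlling the sets $L^{i,j}(\cdot)$ of group elements realizing each sorted coordinate along this iteration (Lemmas~2.3--2.6), so that the piecewise-linear structure is preserved in a small box around $\sum_r z_r$. Orthogonality forces the process to stop after at most $d=\dim\Vc$ steps, and at that point the collapsed direction lies in $\mathrm{span}(z_1,\ldots,z_d)=\Vc$, which lets one manufacture two non-equivalent points with identical $\Phi_{\bw,S}$-image, contradicting injectivity. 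This orthogonality-driven termination is precisely the missing ingredient in your sketch.
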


\begin{cor}\label{maxfilter} When injective, the Max Filter embedding is bi-Lipschitz. Specifically:
Let \( G \) be a finite group acting unitarily on \( V \). Suppose the \text{max filter bank embedding} \( \hat{\Phi}_{w,S_{\text{max}}} : \hat{\Vc}\to \mathbb{R}^p \), defined by
\[
\hat{\Phi}_{w,S_{\text{max}}}([x]) = \left( \max_{g \in G} \langle x, g \cdot w_k \rangle \right)_{k \in [p]} 
\]
is injective. Then \( \hat{\Phi}_{w,S_{\text{max}}} \) is \textbf{bi-Lipschitz}, that is, there are $ 0 < a \leq b < \infty$, so that for every $x,y\in\Vc$,
\[
a \cdot \dis([x], [y]) \leq \| \Phi_{w,S_{\text{max}}}(x) - \Phi_{w,S_{\text{max}}}(y) \|_2 \leq b \cdot \dis([x], [y]).
\]
\end{cor}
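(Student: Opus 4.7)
The plan is to recognize the max filter bank embedding as a particular instance of the general coorbit embedding $\Phi_{\bw,S}$ defined in \eqref{emb}, and then to apply \Cref{theormainstab} directly. Specifically, I would choose the index set $S_{\max} = \{(1,i) : i \in [p]\} \subset [N]\times[p]$, so that $|S_{\max}|=p$ and the column sets satisfy $S_{\max,i}=\{1\}$ for every $i \in [p]$; in particular each $m_i = 1$.

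With this choice, the only entry extracted from the sorted coorbit attached to each window is its first (largest) coordinate. By the definition of the sorting operator $\Trun$, which rearranges entries in monotonically decreasing order,
\[ \Phi_{w_i,1}(x) = \bigl[\Trun(\langle U_g w_i, x\rangle)_{g\in G}\bigr]_1 = \max_{g\in G}\langle U_g w_i, x\rangle = \max_{g\in G}\langle x, g\cdot w_i\rangle, \]
where the last equality uses the symmetry of the real inner product together with the convention $g\cdot w = U_g w$. Assembling across $i \in [p]$, I conclude that $\Phi_{\bw,S_{\max}}$ agrees entrywise with the map $\Phi_{w,S_{\max}}$ appearing in the corollary, and hence that the lifts to the quotient, $\hat{\Phi}_{\bw,S_{\max}}$ and $\hat{\Phi}_{w,S_{\max}}$, coincide on $\hat{\Vc}$.

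Given this identification, the hypothesis that $\hat{\Phi}_{w,S_{\max}}$ is injective is exactly the hypothesis of \Cref{theormainstab} applied with $S = S_{\max}$. Invoking that theorem produces constants $0 < a \leq b < \infty$ for which the two-sided Lipschitz inequalities hold, which is precisely the conclusion of the corollary. The substantive work, namely the upgrade from injectivity on the quotient to the bi-Lipschitz property, is entirely encapsulated in \Cref{theormainstab}; there is no remaining obstacle, and the corollary amounts to a translation between the two notational conventions.
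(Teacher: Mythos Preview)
Your proposal is correct and matches the paper's own treatment: the paper simply states that \Cref{maxfilter} follows from \Cref{theormainstab}, and your argument spells out precisely why, by identifying the max filter bank with the coorbit embedding for $S_{\max}=\{(1,i):i\in[p]\}$.
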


\begin{theor}\label{dimred}
Let $\{U_g~,~g\in G\}$ denote a representation of a finite group $G$ of order $N$
 acting by isometries on the real vector space $\Vc$ of dimension $d$. 
 Let $V_G=\{x \in \Vc : U_gx=x ~,~\forall g \in G\}$ denote the linear space of vectors invariant of this representation, and let $d_G= \dim(V_G)$ denote its dimension. 
 Let $\bw\in\Vc^p$ and $S\subset[N]\times[p]$ so that $\hat{\Phi}_{\bw,S}: \hat{\Vc} \to \R^{m}$ is injective on the quotient space $\hat{\Vc}$. 
Then, for a generic linear map $\ell:\R^m \to \R^{2d-d_G}$, the map $\hat{\Psi}_{\bw,S,\ell}=\ell \circ \hat{\Phi}_{\bw,S}:\hat{\Vc}\rightarrow\R^{2d-d_G}$ is injective and bi-Lipschitz. Here generic means open dense with respect to Zariski topology over the set of matrices.
\end{theor}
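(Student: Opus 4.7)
The plan is to apply Theorem \ref{theormainstab}, which guarantees that $\Phi_{\bw,S}$ is already bi-Lipschitz on $\hat{\Vc}$, and reduce matters to showing that a generic linear $\ell:\R^m\to\R^{2d-d_G}$ preserves the lower Lipschitz bound on its image. Since the upper bound is automatic ($\|\ell(v)\|\leq\|\ell\|_{op}\|v\|$), everything reduces to producing a uniform $c>0$ with $\|\ell(v)\|\geq c\|v\|$ for every secant $v=\Phi_{\bw,S}(x)-\Phi_{\bw,S}(y)$; combined with the constants $a,b$ from Theorem \ref{theormainstab} this yields
\[
ca\,\dis([x],[y])\leq\|\hat{\Psi}_{\bw,S,\ell}([x])-\hat{\Psi}_{\bw,S,\ell}([y])\|\leq\|\ell\|_{op}\,b\,\dis([x],[y]),
\]
with injectivity an immediate consequence of the lower bound.

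The key ingredient is a secant-dimension count. By construction $\Phi_{\bw,S}$ is piecewise linear: $\Vc$ decomposes into finitely many polyhedral cones $\{C_\alpha\}$, indexed by $p$-tuples of orderings $(\sigma_1,\dots,\sigma_p)$ of $G$ that sort each coorbit $(\ip{U_g w_i}{x})_{g\in G}$, and on each cone $\Phi_{\bw,S}|_{C_\alpha}$ extends to a linear map $A_\alpha:\Vc\to\R^m$ whose row at index $(j,i)\in S$ is $(U_{\sigma_i(j)}w_i)^T$. For $x_0\in V_G$, unitarity gives $\ip{U_g w_i}{x_0}=\ip{w_i}{U_g^{-1}x_0}=\ip{w_i}{x_0}$ for every $g$, so $A_\alpha x_0=(\ip{w_i}{x_0})_{(j,i)\in S}$ independently of $\alpha$: all the restrictions $A_\alpha|_{V_G}$ coincide with a single linear map $A_0:V_G\to\R^m$. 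Decomposing $x=x_0+x_\perp$ and $y=y_0+y_\perp$ along $\Vc=V_G\oplus V_G^\perp$ with $x\in C_\alpha$ and $y\in C_\beta$, we obtain
\[
\Phi_{\bw,S}(x)-\Phi_{\bw,S}(y)=A_0(x_0-y_0)+A_\alpha x_\perp-A_\beta y_\perp\ \in\ W_{\alpha,\beta},
\]
where $W_{\alpha,\beta}:=A_0(V_G)+A_\alpha(V_G^\perp)+A_\beta(V_G^\perp)$ has dimension at most $d_G+2(d-d_G)=2d-d_G$. Thus every secant lies in the finite union $\bigcup_{\alpha,\beta}W_{\alpha,\beta}$.

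A standard genericity argument then closes things out. For each pair $(\alpha,\beta)$, injectivity of $\ell$ on $W_{\alpha,\beta}$ is equivalent to $\ell B_{\alpha,\beta}$ having full column rank, where $B_{\alpha,\beta}$ is any $m\times(\dim W_{\alpha,\beta})$ matrix whose columns form a basis of $W_{\alpha,\beta}$; this is the Zariski-open condition that at least one maximal minor of $\ell B_{\alpha,\beta}$ is nonzero, and it is non-empty because $\dim W_{\alpha,\beta}\leq 2d-d_G$. The finite intersection over pairs $(\alpha,\beta)$ remains Zariski-open and dense. For $\ell$ in this set, compactness of each unit sphere $W_{\alpha,\beta}\cap\{\|v\|=1\}$ produces $c_{\alpha,\beta}>0$ with $\|\ell(v)\|\geq c_{\alpha,\beta}\|v\|$ on $W_{\alpha,\beta}$, and $c:=\min_{\alpha,\beta}c_{\alpha,\beta}>0$ is the uniform lower bound sought.

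The main obstacle I expect is isolating the codimension drop from $2d$ to $2d-d_G$: the naive bound $\dim(A_\alpha\Vc+A_\beta\Vc)\leq 2d$ needs to be sharpened by observing that all $A_\alpha$ agree on $V_G$, so the $V_G$-component of any secant contributes only $d_G$ dimensions instead of $2d_G$. Once this reduction is in hand, the remainder is routine piecewise-linear bookkeeping and a standard algebraic genericity argument.
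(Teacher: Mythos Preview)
Your proposal is correct and follows essentially the same strategy as the paper: cover the secant set $\Phi_{\bw,S}(x)-\Phi_{\bw,S}(y)$ by finitely many linear subspaces of $\R^m$ of dimension at most $2d-d_G$, show that a Zariski-generic $\ell$ has kernel meeting each trivially, and combine the resulting uniform lower bound on $\ell$ over this union with the bi-Lipschitz constants of $\hat{\Phi}_{\bw,S}$ from Theorem~\ref{theormainstab}. The only cosmetic difference is that the paper indexes the covering subspaces as ranges of linear maps $L_{\pi_1,\dots,\pi_{2p}}:\Vc\times\Vc\to\R^m$ over $2p$-tuples of permutations and gets the dimension bound from rank--nullity via $L(v,v)=0$ for $v\in V_G$, whereas you use the piecewise-linear pieces $A_\alpha$ and the splitting $\Vc=V_G\oplus V_G^\perp$ to reach the same count; the genericity and compactness steps are identical in substance.
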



\begin{theor}\label{LIPTHEOR}
Assume that for some $S \in [N] \times [p]$ and $\bw=(w_1,\dots,w_p) \in \Vc^p$, the map $\hat{\Phi}_{\bw,S}:\hat{\Vc} \to \R^m$ is bi-Lipschitz on the quotient space $\hat{\Vc}=\Vc/G$, with (upper) Lipschitz constant $b=Lip(\hat{\Phi}_{\bw,S})$ and lower Lipschitz constant $a>0$.
\begin{enumerate}
    \item For every Lipschitz map $F: \hat{\Vc} \to H$ into a real Hilbert space $H$ there exists a Lipschitz map $ T :\R^m \rightarrow H$ such that $F=T \circ \Phi_{\bw,S}$ and has Lipschitz constant $\Lip(T) \le \frac{1}{a} \Lip(F) $.
    \item  Conversely, for any Lipschitz map $T: \R^m \to H$, the map $F=T \circ \Phi_{\bw,S}$ is Lipschitz, with Lipschitz constant $\Lip(F) \le b \Lip(T) $.
\end{enumerate} 
\end{theor}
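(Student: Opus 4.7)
The plan is to treat the two directions separately, handling the easy composition direction first and then constructing the factorization $T$ via a two-stage argument: define $T$ on the image of $\Phi_{\bw,S}$, then use a Lipschitz extension theorem to obtain a map defined on all of $\R^m$.

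For part (2), I would simply chain the Lipschitz estimates. Since $\Phi_{\bw,S}$ is $G$-invariant, $F := T\circ\Phi_{\bw,S}$ descends to a well-defined map on $\hat{\Vc}$. Given representatives $x,y$ of two orbits, applying $T$'s Lipschitz bound and then the upper bound from the hypothesis gives
\[
\norm{F([x])-F([y])}_H \le \Lip(T)\,\norm{\Phi_{\bw,S}(x)-\Phi_{\bw,S}(y)} \le b\,\Lip(T)\,\dis([x],[y]),
\]
which is the desired conclusion.

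For part (1), the main construction goes as follows. Let $R := \Phi_{\bw,S}(\Vc)\subset\R^m$ be the image. Since $\hat{\Phi}_{\bw,S}$ is bi-Lipschitz with lower constant $a>0$, it is in particular injective on $\hat{\Vc}$, so for every $r\in R$ there is a unique orbit $[x]\in\hat{\Vc}$ with $\Phi_{\bw,S}(x)=r$. I would define $T_0:R\to H$ by $T_0(r) := F([x])$, which is well-defined precisely because of this injectivity. To bound $\Lip(T_0)$, pick $r_1=\Phi_{\bw,S}(x)$ and $r_2=\Phi_{\bw,S}(y)$ in $R$ and use the lower bi-Lipschitz estimate $a\,\dis([x],[y])\le\norm{r_1-r_2}$ together with the Lipschitz property of $F$:
\[
\norm{T_0(r_1)-T_0(r_2)}_H = \norm{F([x])-F([y])}_H \le \Lip(F)\,\dis([x],[y]) \le \tfrac{1}{a}\Lip(F)\,\norm{r_1-r_2}.
\]
Hence $T_0$ is $(\Lip(F)/a)$-Lipschitz on $R$.

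The final step is to extend $T_0$ to a Lipschitz map $T:\R^m\to H$ with no loss of Lipschitz constant. Because the target $H$ is a real Hilbert space and the source is (a subset of) a Hilbert space, this is exactly the setting of Kirszbraun's extension theorem, which guarantees an extension with the same Lipschitz constant. The resulting $T$ satisfies $T\circ\Phi_{\bw,S}=F$ by construction on $R$ and $\Lip(T)\le\Lip(F)/a$. The only delicate point in the argument is the extension step: without Kirszbraun (or some substitute like McShane's theorem, which requires $H=\R$), one would only get an extension with a dimension-dependent factor. Since the statement concerns a general real Hilbert space $H$, invoking Kirszbraun is the natural and essentially only clean route, and I would cite it explicitly rather than reprove it.
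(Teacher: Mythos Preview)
Your proposal is correct and follows essentially the same approach as the paper: define the factorizing map on the image of $\Phi_{\bw,S}$, bound its Lipschitz constant by $\Lip(F)/a$ via the lower bi-Lipschitz estimate, and then invoke Kirszbraun's extension theorem to extend to all of $\R^m$; part (2) is handled by the same straightforward chaining of Lipschitz bounds. The only cosmetic difference is the order in which the two parts are treated.
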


\begin{theor}\label{cont}
Assume that, for fixed $S \in [N] \times [p]$ and $\bw=(w_1,\dots,w_p) \in \Vc^p$, the map $\hat{\Phi}_{\bw,S}:\Vc \to \R^m$ is bi-Lipschitz on the quotient space $\hat{\Vc}=\Vc/G$. Let $L$ denote a locally convex topological vector space.
For every continuous map $F: \hat{\Vc} \to L$ there exists a continuous $T :\R^m \to L$  such that $F=T \circ \Phi_{\bw,S}$. Furthermore $T(\R^m)$ is included in the convex hull of $F(\hat{\Vc})$.
\end{theor}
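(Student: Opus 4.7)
The plan is to reduce the factorization problem to a classical continuous extension theorem for locally convex target spaces, namely Dugundji's extension theorem. Since the hypothesis supplies a bi-Lipschitz $\hat{\Phi}_{\bw,S}:\hat{\Vc}\to\R^m$, the image $X:=\Phi_{\bw,S}(\Vc)=\hat{\Phi}_{\bw,S}(\hat{\Vc})$ is in bi-Lipschitz correspondence with $(\hat{\Vc},\dis)$. Because $(\hat{\Vc},\dis)$ is complete (as noted just after the definition of $\dis$) and bi-Lipschitz maps preserve Cauchy sequences in both directions, $X$ is complete in the induced Euclidean metric, hence \emph{closed} in $\R^m$. This closedness is what will allow a continuous extension off of $X$.

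Next I would define the target map on $X$ by pulling $F$ back through the bi-Lipschitz homeomorphism: set
\[
T_0:X\to L,\qquad T_0(\Phi_{\bw,S}(x)) := F([x]).
\]
This is well-defined because $F$ is $G$-invariant (being defined on $\hat{\Vc}$) and $\hat{\Phi}_{\bw,S}$ is injective, so the fiber of any $z\in X$ determines $F$ unambiguously. Continuity of $T_0$ follows from $T_0 = F\circ \hat{\Phi}_{\bw,S}^{-1}$, since $\hat{\Phi}_{\bw,S}^{-1}:X\to\hat{\Vc}$ is Lipschitz (with constant $1/a$) by the lower Lipschitz bound, and $F$ is continuous by hypothesis.

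The central step is to apply Dugundji's extension theorem: if $X$ is a closed subset of a metric space $Y$, $L$ is a locally convex topological vector space, and $T_0:X\to L$ is continuous, then there exists a continuous extension $T:Y\to L$ with $T(Y)\subset\overline{\mathrm{conv}}(T_0(X))$ (in fact $T(Y)$ is contained in the convex hull of $T_0(X)$ in Dugundji's original formulation). Applying this with $Y=\R^m$, closed subset $X$, and continuous $T_0$ produces the desired $T:\R^m\to L$. By construction $T\circ \Phi_{\bw,S}(x)=T_0(\Phi_{\bw,S}(x))=F([x])$ for every $x$, which is the claimed factorization, and $T(\R^m)\subset \mathrm{conv}(T_0(X))=\mathrm{conv}(F(\hat{\Vc}))$.

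The only mildly delicate point is justifying the hypotheses of Dugundji's theorem in our exact setting: one needs $L$ to be locally convex (given), $X$ to be closed in a metric space (established via completeness of $(\hat{\Vc},\dis)$ transferred through the bi-Lipschitz embedding), and $T_0$ continuous (immediate from bi-Lipschitz invertibility of $\hat{\Phi}_{\bw,S}$ on its image). Once these are in place the theorem applies verbatim and delivers both the extension and the convex-hull containment in a single stroke.
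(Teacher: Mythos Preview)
Your proof is correct and follows essentially the same approach as the paper: define $T_0=F\circ\hat{\Phi}_{\bw,S}^{-1}$ on the image $X=\Phi_{\bw,S}(\Vc)$, show $X$ is closed in $\R^m$ via completeness of $(\hat{\Vc},\dis)$ transported through the bi-Lipschitz map, and then invoke Dugundji's extension theorem to obtain $T$ with the convex-hull containment. Your write-up is in fact slightly more explicit than the paper's in justifying closedness and continuity of $T_0$.
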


\begin{rem}
    In \cite{balan2024stability} we generalize \cref{theormainstab} for the case where our embedding has the form $a \circ \Phi_{\bw,S}(x)$, where $a$ is a linear map.
\end{rem}

\begin{rem}
        In \cite{balan2023g} we study under which conditions the map  $\hat{\Phi}_{\bw,S}: \hat{\Vc} \to \R^m$ is injective on the quotient space $\hat{\Vc}=\Vc/G$.
\end{rem}

\section{Stability of Embedding}
Suppose that for $\bw=(w_1,\dots,w_p) \in \Vc^p$ and $S \subset [N] \times [p]$ the map $\hat{\Phi}_{\bw,S}$ is injective. In this case, we claim the map $\hat{\Phi}_{\bw,S}$ is also bi-Lipschitz, (\Cref{theormainstab}). An estimate of the upper Lipschitz constant $b$ is given in \Cref{lemupper}.

\Cref{maxfilter} follows from \Cref{theormainstab}.

The proof of \Cref{theormainstab} is contained in the next two subsections.

\subsection{Upper Lipschitz bound}

Note that $\Phi_{\wb,S}$ is Lipschitz because it is a composition of Lipschitz maps. It remains to estimate the upper bound.
\begin{lem}\label{lemupper}
  Consider $G$ be a finite group of size $N$ acting unitarily on  $\Vc$. Let $\wb \in \Vc^p$ and $S\subset  [N]\times [p]$.
Denote
\[  B = \max_{\substack{
\sigma_1,\ldots,\sigma_p\subset G \\
|\sigma_i|=m_i,\forall i
}}\lambda_{max}\left(\sum_{i=1}^p \sum_{g \in \sigma_i} U_g w_i w_i^T U_g^T  \right), \]
where $S_i=\{j\in[N],(i,j)\in S\}$ and $m_i=|S_i|$. 
Then $\hat{\Phi}_{\wb,S}:(\Vh,\dd)\rightarrow \R^m$ is Lipschitz with an upper Lipschitz constant $\sqrt{B}$.
\end{lem}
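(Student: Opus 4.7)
The plan is to exhibit $\Phi_{\wb, S}$ as a continuous piecewise linear map on $\Vc$, bound the operator norm of each linear piece by $\sqrt{B}$, and conclude via the standard fact that a continuous piecewise linear map on Euclidean space is globally Lipschitz with constant at most the supremum of the piecewise operator norms. The transfer to the quotient $(\hat{\Vc},\dd)$ then follows directly from $G$-invariance of $\Phi_{\wb,S}$.

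The first step is to partition $\Vc$ into closed polyhedral cells $C_{\pi_1,\dots,\pi_p}$ indexed by tuples $(\pi_1,\dots,\pi_p)$ of permutations of $G$, where $x\in C_{\pi_1,\dots,\pi_p}$ means $\langle U_{\pi_i(1)} w_i, x\rangle \ge \cdots \ge \langle U_{\pi_i(N)} w_i, x\rangle$ for every $i\in[p]$. On such a cell $\Phi_{w_i,j}(x)=\langle U_{\pi_i(j)} w_i, x\rangle$, so $\Phi_{\wb,S}$ acts linearly as $x\mapsto A_{\pi_1,\dots,\pi_p}\, x$, where $A=A_{\pi_1,\dots,\pi_p}\in\R^{m\times d}$ has rows $(U_{\pi_i(j)} w_i)^T$ indexed by $(i,j)$ with $j\in S_i$. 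Continuity across shared boundaries is automatic, since any ties in the values being sorted yield the same sorted vector irrespective of which admissible permutation is used to represent the cell. A direct computation gives
\[ A^T A \;=\; \sum_{i=1}^p \sum_{g\in\sigma_i} U_g w_i w_i^T U_g^T, \qquad \sigma_i=\{\pi_i(j):j\in S_i\}, \]
where each $\sigma_i\subset G$ has cardinality $m_i$. Hence $\|A\|^2=\lambda_{\max}(A^T A)\le B$ on every cell, by the very definition of $B$.

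To globalize the per-piece bound, for $x,y\in\Vc$ I would parametrize the segment $\gamma(t)=(1-t)x+ty$ and split $[0,1]$ into finitely many subintervals $[t_{k-1},t_k]$ on each of which $\gamma$ stays in a single cell; applying the linear bound on each sub-segment and combining via the triangle inequality with $\sum_k (t_k-t_{k-1})=1$ gives $\|\Phi_{\wb,S}(x)-\Phi_{\wb,S}(y)\|_2 \le \sqrt{B}\,\|x-y\|$. For the quotient bound, $G$-invariance yields $\|\Phi_{\wb,S}(x)-\Phi_{\wb,S}(y)\|=\|\Phi_{\wb,S}(x)-\Phi_{\wb,S}(U_h y)\|\le \sqrt{B}\,\|x-U_h y\|$ for every $h\in G$, and minimizing over $h$ gives the desired $\sqrt{B}\,\dd([x],[y])$. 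The main technical obstacle is the bookkeeping around the cell decomposition, particularly verifying continuity at boundaries where ties make the sorting permutation non-unique, and confirming that as $(\pi_i)_{i=1}^p$ varies the induced subsets $\sigma_i=\pi_i(S_i)$ lie inside the family of size-$m_i$ subsets over which $B$ is maximized; both points are routine but should be stated carefully.
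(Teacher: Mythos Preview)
Your argument is correct and is essentially the same idea as the paper's proof: both exploit that $\Phi_{\bw,S}$ is continuous and piecewise linear, and bound the Lipschitz constant along the segment $t\mapsto(1-t)x+ty$ by the maximal operator norm of the linear pieces. The presentations differ in two minor ways. First, the paper works analytically---invoking a.e.\ differentiability of the scalar functions $t\mapsto\Phi_{i,j}((1-t)x+ty)$ and the fundamental theorem of calculus, then Minkowski's integral inequality---whereas you argue combinatorially via an explicit polyhedral cell decomposition and a finite subdivision of the segment plus the triangle inequality; your version is a bit more elementary and sidesteps the a.e.\ bookkeeping. Second, and more to the point of the stated constant, you bound all templates $w_1,\dots,w_p$ jointly by computing $A^TA=\sum_i\sum_{g\in\sigma_i}U_gw_iw_i^TU_g^T$ on each cell and thus obtain $\sqrt{B}$ directly, while the paper's written proof treats each $i$ separately via an individual $B_i=\max_{|\sigma|=m_i}\lambda_{\max}(\sum_{g\in\sigma}U_gw_iw_i^TU_g^T)$ and then sums; your joint bound is exactly what the statement asks for.
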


\begin{proof}
For fixed $x,y \in \Vc$, $i \in [p], j \in S_i$, let $\psi_{i,j}:[0,1] \to \R$, where \[\psi_{i,j}(t) =\Phi_{i,j}((1-t)x +ty) = \ip{(1-t)x +ty}{U_{g(t)}w_i}.\]

By Lebesgue differentiation theorem we have that $\psi_{i,j}$ is differentiable almost everywhere. Consequently $\Phi_{i,j}$ is also differentiable almost everywhere. Notice that
\begin{align*}
    \frac{d}{dt}\psi_{i,j}(t) = \ip{y-x}{U_{g(t)}w_i}
\end{align*}
for almost every $t$. Specifically, $\psi_{i,j}(t)$ is differentiable at all $t \in [0,1]$ such that there exists $\epsilon >0$ so that $g{\vert}_{(t-\epsilon,t+\epsilon)}$ can be chosen to be constant. This happens because $G$ is finite.

By the fundamental theorem of calculus we get \[\Phi_{i,j}(x)-\Phi_{i,j}(y)= \int_0^1\frac{d}{dt}\Phi_{i,j}((1-t)x +ty)dt.\]

Therefore,

 \[\Phi_{i,j}(y)-\Phi_{i,j}(x)= \int_0^1\ip{y-x}{U_{g_{j_t}}w_i}dt\]
   so
   \begin{align*}
    \norm{\Trun \{\Phi_{i,j}(x)\}_{j \in S_i}-\Trun \{\Phi_{i,j}(y)\}_{j \in S_i}} &\le \int_0^1(\sum_{j \in S_i}\ip{y-x}{U_{g_{j_t}}w_i}^2)^{1/2}dt\\ &\le
    \sqrt{B_i} \norm{x-y}
    \end{align*}
where $B_i=\max_{\substack{
\sigma \subset G, \\
|\sigma|=|S_i|,
}}\lambda_{max}\left(\sum_{g \in \sigma} U_g w_i w_i^T U_g^T  \right).$

Hence, for $x \in [x], y \in [y]$ so that $\dis([x],[y])=\norm{x-y}$,
\begin{align*}
    \norm{\Phi_{\bw,S}(x)-\Phi_{\bw,S}(y)}^2= \sum_{i=1}^p \norm{\Phi_{i,j}(y)-\Phi_{i,j}(x)}^2 &\le \sum_{i=1}^p B_k \norm{x-y}^2\\ &\le B\dis([x],[y])^2.
\end{align*}

\end{proof}

\subsection{Lower Lipschitz bound}

We start with some useful geometric results.

\subsubsection{Geometric Analysis of Coorbits}
Fist let us introduce some additional notation.
For fixed $i \in [p]$, $j \in [N]$ and $x \in \Vc$ we define the following non-empty subset of the group $G$:
\begin{equation}\label{LIJ}
L^{i,j}(x)=\{g \in G: \ip{x}{U_g w_i} =\Phi_{i,j}(x)\}.
\end{equation}
This represents the collection of group elements that achieve the $j$-th position for the sorted co-orbit $\downarrow (\ip{U_g w_i}{x})_{g\in G}$.

Consider also the map

\begin{equation}\label{DIJ}
\Delta^{i,j}(x) =\begin{cases} \min_{g \notin L^{i,j}(x)}(|\ip{U_g w_i}{x}-\Phi_{i,j}(x)|)\frac{1}{\norm{w_i}},~\text{ if } L^{i,j}(x) \not= G\\ \frac{\norm{x}}{\norm{w_i}},~  \text{ if } L^{i,j}(x) = G. \\
\end{cases}
\end{equation}

\begin{lem}\label{lemPhi}
\mbox{}

a. For any $x\in\Vc$, $i\in[p]$, and $j\in[N]$, 
\begin{equation}
\label{eq:ineq1}    
\left|\{g\in G~,~\ip{U_g w_i}{x}>\Phi_{i,j}(x) \} \right| \leq j-1
\end{equation} 
\begin{equation}
\label{eq:ineq2}
\left|\{g\in G~,~\ip{U_g w_i}{x}<\Phi_{i,j}(x) \} \right| \leq N-j. 
\end{equation}

b. For any $x\in\Vc$, $i\in[p]$, and $j\in[N-1]$,

(i) either $\Phi_{i,j}(x)=\Phi_{i,j+1}(x)$, in which case $L^{i,j}(x)=L^{i,j+1}(x)$, 

or 

(ii)  $\Phi_{i,j}(x)>\Phi_{i,j+1}(x)$, in which case $L^{i,j}(x)\neq L^{i,j+1}(x)$ and  
\[ \{g\in G~,~\ip{U_g w_i}{x}>\Phi_{i,j+1}(x) \} = \cup_{k\leq j} L^{i,k}(x)
~~,~~
\left| \cup_{k\leq j} L^{i,k}(x)\right| = j   \]
and
\[ \{g\in G~,~\ip{U_g w_i}{x}<\Phi_{i,j}(x) \} =
\cup_{k\geq j+1} L^{i,k}(x) 
~~,~~
\left| \cup_{k\geq j+1} L^{i,k}(x)\right| = N- j.   \]
\end{lem}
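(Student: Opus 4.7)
The plan is to prove both parts by unwinding the definition of the sorting operator $\Trun$. Fix $i\in[p]$ and let $\sigma:[N]\to G$ be any permutation that realises the sort, i.e. $\Phi_{i,k}(x)=\ip{U_{\sigma(k)}w_i}{x}$ and $\Phi_{i,1}(x)\geq\Phi_{i,2}(x)\geq\cdots\geq\Phi_{i,N}(x)$. Both inequalities in part (a) and both set identities in part (b) will be read off from this ordering by a careful but elementary bookkeeping of ties.

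For part (a), I would argue that if $\ip{U_g w_i}{x}>\Phi_{i,j}(x)$, then $g=\sigma(k)$ for some $k$ with $\Phi_{i,k}(x)>\Phi_{i,j}(x)$, which forces $k<j$; hence there are at most $j-1$ such $g$. The companion inequality \eqref{eq:ineq2} is obtained by the symmetric argument (or by applying the first inequality to $-x$ after sorting the opposite direction), noting that $|\{k: \Phi_{i,k}(x)<\Phi_{i,j}(x)\}|\leq N-j$ because positions $k\leq j$ all satisfy $\Phi_{i,k}(x)\geq \Phi_{i,j}(x)$.

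For part (b), I would split into the two stated cases. In case (i), since $L^{i,j}(x)$ depends on $\Phi_{i,j}(x)$ only through its value, the equality $\Phi_{i,j}(x)=\Phi_{i,j+1}(x)$ gives $L^{i,j}(x)=L^{i,j+1}(x)$ directly from \eqref{LIJ}. In case (ii), the two sets $L^{i,j}(x)$ and $L^{i,j+1}(x)$ are nonempty preimages of distinct values of $g\mapsto\ip{U_gw_i}{x}$ and hence disjoint, so in particular unequal. To identify $\{g:\ip{U_gw_i}{x}>\Phi_{i,j+1}(x)\}$, I would observe that, because the sorted sequence is nonincreasing and $\Phi_{i,j}(x)>\Phi_{i,j+1}(x)$, the positions $1,\ldots,j$ of $\sigma$ carry values strictly greater than $\Phi_{i,j+1}(x)$ while positions $j+1,\ldots,N$ carry values $\leq\Phi_{i,j+1}(x)$. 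The group elements in positions $1,\ldots,j$ are precisely $\bigcup_{k\leq j}L^{i,k}(x)$, and the cardinality is $j$ because $\sigma$ is a bijection. The statement about $\{g:\ip{U_gw_i}{x}<\Phi_{i,j}(x)\}$ follows by the mirror image argument with positions $j+1,\ldots,N$.

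The only real subtlety is accounting for ties: a single value $c$ appearing, say, $r$ times in the sorted list contributes the same set $\{g:\ip{U_gw_i}{x}=c\}$ of size $r$ to $r$ consecutive indices $L^{i,k}(x)$, so unions $\bigcup_{k\leq j}L^{i,k}(x)$ can collapse badly unless one is careful. I expect the main (very mild) obstacle to be writing the cardinality identities $|\bigcup_{k\leq j}L^{i,k}(x)|=j$ cleanly; the hypothesis $\Phi_{i,j}(x)>\Phi_{i,j+1}(x)$ in case (ii) is exactly what rules out a tie straddling position $j$, so the union over $k\leq j$ exhausts complete level sets and no over- or undercounting occurs. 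With that observation both cardinalities come out to $j$ and $N-j$ as claimed.
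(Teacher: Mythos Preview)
Your proposal is correct and follows essentially the same approach as the paper: both arguments reduce to the basic observation that $\Phi_{i,j}(x)$ is the $j$-th entry of the decreasingly sorted coorbit, so at most $j-1$ entries can be strictly larger and at most $N-j$ strictly smaller. The paper phrases part (a) and the cardinality claim in (b)(ii) as proofs by contradiction, whereas you set up the sorting bijection $\sigma$ explicitly and read the counts off directly; your handling of ties via the strict inequality $\Phi_{i,j}(x)>\Phi_{i,j+1}(x)$ is exactly the mechanism the paper relies on as well.
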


\begin{proof}

    (a) Recall that  $\Phi_{i,j}(x)$, is the $j$-th coordinate of the monotonically decreasing sorted vector $\Trun(\ip{U_gw_i}{x})_{g \in G}$.
    Suppose that \[ \left|\{g\in G~,~\ip{U_g w_i}{x}>\Phi_{i,j}(x) \} \right| > j-1 .\] Then there are at least $j$, distinct elements of group $G$,  $(h_1,\dots h_j)$,  such that $\ip{U_{h_k}w_i}{x}>\ip{U_gw_i}{x},~\forall k \in [j]$. But this is a contradiction. Similarly, if \[ \left|\{g\in G~,~\ip{U_g w_i}{x}<\Phi_{i,j}(x) \} \right| > N-j \]
    then there exist at least $N-j+1$, distinct elements of group $G$, say $\{h_1,\dots h_j\}$ such that $\ip{U_{h_k}w_i}{x}<\ip{U_gw_i}{x}~\forall k \in [N-j+1]$, which is also a contradiction.
    
    (b) If $\Phi_{i,j}(x)=\Phi_{i,j+1}(x)$, a group element $g$ achieves $\Phi_{i,j}(x)$ if and only if, it also achieves $\Phi_{i,j+1}(x)$, therefore $L^{i,j}(x)=L^{i,j+1}(x)$. On the other hand, if $\Phi_{i,j}(x)>\Phi_{i,j+1}(x)$ then  we claim that $L^{i,j}(x)$ and $ L^{i,j+1}(x)$ are disjoint sets, because otherwise there is $g \in L^{i,j}(x) \cap  L^{i,j+1}(x)$, but then $\Phi_{i,j}(x)= \ip{U_gw_i}{x}=\Phi_{i,j+1}(x)$. 
    
Now assume that  \[ \{g\in G~,~\ip{U_g w_i}{x}>\Phi_{i,j+1}(x) \} \not= j .\]
Without loss of generality
     \[ \{g\in G~,~\ip{U_g w_i}{x}>\Phi_{i,j+1}(x) \} > j \] 
    so there exists at least $j+1$ group elements $(h_1,\dots,h_{j+1})$, such that \[\ip{U_{h_k} w_i}{x}>\Phi_{i,j+1},~\forall k \in [j+1]\] but this is a contradiction. Similarly,\[ \{g\in G~,~\ip{U_g w_i}{x}<\Phi_{i,j}(x) \} =
\cup_{k\geq j+1} L^{i,k}(x) 
~~,~~ \left| \cup_{k\geq j+1} L^{i,k}(x)\right| = N- j. \]
\end{proof}

Note that for any $w_1,\ldots,w_p\in\Vc\setminus\{0\}$ the subset $L^{i,j}(x) \subset G$ has the following ``nestedness'' property.
\begin{lem}\label{lemnest}
  For any $x,y \in \Vc$ such that $\norm{y} < \frac{1}{2}\Delta^{i,j}(x)$, we have that $L^{i,j}(x+y) \subset L^{i,j}(x)$. Furthermore,
\[
\{ g\in G~,~\ip{U_g w_i}{x}>\Phi_{i,j}(x) \} \subset 
\{ g\in G~,~\ip{U_g w_i}{x+y}>\Phi_{i,j}(x+y) \},
\]
\[
\{ g\in G~,~\ip{U_g w_i}{x} < \Phi_{i,j}(x) \} \subset 
\{ g\in G~,~\ip{U_g w_i}{x+y} < \Phi_{i,j}(x+y) \},
\]
\[
\{ g\in G~,~\ip{U_g w_i}{x+y}\geq\Phi_{i,j}(x+y) \} \subset 
\{ g\in G~,~\ip{U_g w_i}{x} \geq \Phi_{i,j}(x) \}
\]
and
\[
\{ g\in G~,~\ip{U_g w_i}{x+y} \leq \Phi_{i,j}(x+y) \} \subset 
\{ g\in G~,~\ip{U_g w_i}{x} \leq \Phi_{i,j}(x) \}.
\]

\end{lem}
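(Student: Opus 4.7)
The plan is to exploit the ``perturbation smaller than half the gap'' hypothesis: if the inner products $\ip{U_g w_i}{\cdot}$ move by less than $\alpha/2$, where $\alpha := \norm{w_i}\Delta^{i,j}(x)$ is the minimum distance from $\Phi_{i,j}(x)$ to any other value $r_g(x) := \ip{U_g w_i}{x}$ in the coorbit, then the relative order of distinct coorbit values cannot reverse. This single observation drives all five set inclusions.

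First I would set $r_g(z) := \ip{U_g w_i}{z}$, observe linearity $r_g(x+y)=r_g(x)+r_g(y)$, and use unitarity of $U_g$ together with Cauchy--Schwarz to bound $|r_g(y)| \leq \norm{w_i}\norm{y} < \alpha/2$. The degenerate case $L^{i,j}(x) = G$ is then trivial: $r_g(x)$ is constant in $g$, so $\{g : r_g(x) > \Phi_{i,j}(x)\}$ and $\{g : r_g(x) < \Phi_{i,j}(x)\}$ are both empty while $\{g : r_g(x) \geq \Phi_{i,j}(x)\} = \{g : r_g(x) \leq \Phi_{i,j}(x)\} = G$, and each of the five listed inclusions reduces to $\emptyset \subset (\cdot)$ or $(\cdot) \subset G$.

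Assuming now $L^{i,j}(x) \neq G$, partition $G$ into $A_{>} := \{g : r_g(x) > \Phi_{i,j}(x)\}$, $L^{i,j}(x)$, and $A_{<} := \{g : r_g(x) < \Phi_{i,j}(x)\}$. By the definition of $\Delta^{i,j}(x)$ in \eqref{DIJ}, one has $r_g(x) \geq \Phi_{i,j}(x) + \alpha$ on $A_{>}$ and $r_g(x) \leq \Phi_{i,j}(x) - \alpha$ on $A_{<}$. Adding $y$ and using $|r_g(y)| < \alpha/2$ produces three strictly separated bands for $r_g(x+y)$: strictly above $\Phi_{i,j}(x) + \alpha/2$ on $A_{>}$, inside the open interval $(\Phi_{i,j}(x) - \alpha/2,\, \Phi_{i,j}(x) + \alpha/2)$ on $L^{i,j}(x)$, and strictly below $\Phi_{i,j}(x) - \alpha/2$ on $A_{<}$.

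The central step is then to locate $\Phi_{i,j}(x+y)$ in the middle band. By \Cref{lemPhi}(a), $|A_{>}| \leq j - 1$ and $|A_{<}| \leq N - j$, so $|A_{>}| + 1 \leq j \leq |A_{>}| + |L^{i,j}(x)|$. Because the three bands are disjoint and ordered, sorting $\{r_g(x+y)\}_{g \in G}$ in decreasing order places the $|A_{>}|$ values from $A_{>}$ first, then the $|L^{i,j}(x)|$ values from the middle band, then those from $A_{<}$; the $j$-th entry therefore lies in the middle block, so $\Phi_{i,j}(x+y) = r_{g^{*}}(x+y)$ for some $g^{*} \in L^{i,j}(x)$, and in particular $|\Phi_{i,j}(x+y) - \Phi_{i,j}(x)| < \alpha/2$. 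The inclusion $L^{i,j}(x+y) \subset L^{i,j}(x)$ is then immediate since the strict bands rule out $g \in A_{>} \cup A_{<}$; the two strict inclusions follow by comparing $r_g(x+y)$ against this location of $\Phi_{i,j}(x+y)$; and the two non-strict ``$\geq$''/``$\leq$'' inclusions are the contrapositives of the strict ones. The main (if mild) obstacle is the counting argument locating $j$ in the middle block, as it requires invoking both inequalities of \Cref{lemPhi}(a) simultaneously.
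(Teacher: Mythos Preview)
Your argument is correct and rests on the same two ingredients as the paper's proof: the perturbation bound $|r_g(y)|<\norm{w_i}\Delta^{i,j}(x)/2$ and the counting inequalities of \Cref{lemPhi}(a). The paper proceeds by contradiction (assume $g\in L^{i,j}(x+y)\setminus L^{i,j}(x)$, say $g\in A_{<}$, and show that all of $\cup_{k\le j}L^{i,k}(x)$---at least $j$ elements---would then lie strictly above $\Phi_{i,j}(x+y)$, violating \eqref{eq:ineq1}) and writes out only the first inclusion explicitly; your band-separation argument is a direct rather than contradiction version of the same idea, and has the advantage of dispatching all five inclusions (and the degenerate case $L^{i,j}(x)=G$) in one stroke.
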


\begin{proof}

   Suppose that exists $g \in G$ such that $g \in L^{i,j}(x+y)$ but $g \notin L^{i,j}(x)$. Without loss of generality assume that $\ip{U_gw_i}{x} < \Phi_{i,j}(x)$. Then for every $h  \in \cup_{k \le j}L^{i,k}(x)$
        \begin{align*}
      \ip{U_hw_i}{x+y}-\ip{U_gw_i}{x+y} \geq  \ip{U_hw_i}{x}-\ip{U_gw_i}{x}-2\norm{y}\norm{w_i}>0.
        \end{align*}
        On the other hand, $\ip{U_gw_i}{x+y} = \Phi_{i,j}(x+y)$. Thus 
$$\cup_{k\leq j}L^{i,k}(x)\subset \{h\in G~,~\ip{U_h w_i}{x+y}>\Phi_{i,j}(x+y)\}.$$
    But the set  $\cup_{k \le j}L^{i,k}(x)$ contains at least $j$ elements (since each $L^{i,j}(x)$ is non-empty) and so we derived a contradiction with \Cref{lemPhi}(a)  \Cref{eq:ineq1}.


\end{proof}
\begin{lem}\label{lemnest2}
For $i \in [p]$ and $j \in [N]$, fix vectors $x,y \in \Vc$ and positive numbers $c_1,c_2>0$ such that $\max(c_1,c_2)\norm{y}< \frac{1}{4}\Delta^{i,j}(x)$.
Then $L^{i,j}(x+c_1y)=L^{i,j}(x+c_2y)$.
\end{lem}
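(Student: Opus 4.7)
My plan is to show that for every $c>0$ satisfying $c\norm{y}<\tfrac{1}{4}\Delta^{i,j}(x)$, the set $L^{i,j}(x+cy)$ coincides with a fixed subset of $G$ that depends only on $x,y,i,j$ and not on $c$; applying this at $c=c_1$ and $c=c_2$ yields the desired equality. To identify this fixed subset, I would partition $G$ into the three blocks
\[
A=\{g\in G:\ip{U_g w_i}{x}>\Phi_{i,j}(x)\},\quad B=L^{i,j}(x),\quad C=\{g\in G:\ip{U_g w_i}{x}<\Phi_{i,j}(x)\},
\]
observing that $|A|\leq j-1$ and $|A|+|B|\geq j$ by Lemma \ref{lemPhi}, and that by the definition of $\Delta^{i,j}$ every element of $A\cup C$ is separated from $\Phi_{i,j}(x)$ by at least $\Delta^{i,j}(x)\norm{w_i}$.

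The first step is to verify that this three-block structure persists at $z=x+cy$. Since $|\ip{U_g w_i}{z}-\ip{U_g w_i}{x}|\leq c\norm{y}\norm{w_i}<\tfrac{1}{4}\Delta^{i,j}(x)\norm{w_i}$ for every $g$, the inner products $\ip{U_g w_i}{z}$ for $g\in A$ still strictly exceed those for $g\in B$, which in turn still strictly exceed those for $g\in C$. Combined with the cardinality bounds $|A|<j\leq|A|+|B|$, this forces the $j$-th largest value at $z$ to fall within block $B$, so $L^{i,j}(z)\subset B$; more precisely, $L^{i,j}(z)$ equals the set of $g\in B$ attaining the $(j-|A|)$-th largest value of $\ip{U_g w_i}{z}$ within $B$.

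The final step exploits that for $g\in B$ one has $\ip{U_g w_i}{z}=\Phi_{i,j}(x)+c\ip{U_g w_i}{y}$, so whenever $c>0$ the decreasing sort of these values within $B$ coincides with the decreasing sort of $\{\ip{U_g w_i}{y}\}_{g\in B}$, which is a combinatorial object independent of the value of $c$. Hence the $(j-|A|)$-th level set inside $B$ is the same for $c=c_1$ and $c=c_2$, giving $L^{i,j}(x+c_1y)=L^{i,j}(x+c_2y)$. The only subtlety is the degenerate case $L^{i,j}(x)=G$ with $A=C=\emptyset$, which is handled by running the internal sort argument directly on $B=G$; the sub-case $x=0$ is vacuous since then $\Delta^{i,j}(x)=0$ forces $y=0$ under the hypothesis. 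I expect the only real conceptual point to be setting up the block decomposition and noticing that the factor $\tfrac{1}{4}$ is exactly what is required to keep the blocks strictly separated after perturbation; once that is in place, the constancy of the internal sort for $c>0$ is automatic from linearity of $c\mapsto \ip{U_g w_i}{x+cy}$.
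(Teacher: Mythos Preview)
Your argument is correct and takes a genuinely different route from the paper's. You set up the three-block partition $A\cup B\cup C$ of $G$ at the unperturbed point $x$, verify via the $\tfrac{1}{4}\Delta^{i,j}(x)$ bound that this ordering survives the perturbation $x\mapsto x+cy$, and then reduce the problem to sorting $\{\ip{U_gw_i}{y}\}_{g\in B}$ inside the middle block, which is manifestly independent of $c>0$. The paper instead argues by contradiction: it assumes some $g_1\in L^{i,j}(x+c_2y)\setminus L^{i,j}(x+c_1y)$, invokes \Cref{lemnest} to place $g_1$ in $L^{i,j}(x)$, and then produces an auxiliary element $h$ via a pigeonhole count using \Cref{lemPhi}(a)--(b), followed by a sign-by-sign case analysis on $\ip{U_hw_i}{x}-\ip{U_{g_1}w_i}{x}$ that leads to incompatible inequalities at the two scalings $c_1,c_2$. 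Your approach is cleaner and more structural: once the block separation is established, no contradiction or auxiliary element is needed, and the dependence on $c$ disappears for free from the affine form $\Phi_{i,j}(x)+c\ip{U_gw_i}{y}$ on $B$. One small remark: the separation argument in fact only needs $c\norm{y}<\tfrac{1}{2}\Delta^{i,j}(x)$, so the constant $\tfrac{1}{4}$ is sufficient but not ``exactly'' what is required; this does not affect the validity of your proof.
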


\begin{proof}
 \item Assume that exist $g_1 \in L^{i,j}(x+c_2y)$ with $g_1 \notin L^{i,j}(x+c_1y)$. 
Without loss of generality assume that $\ip{U_{g_1}w_i}{x+c_1y} < \Phi_{i,j}(x+c_1y)$. Let $q>j$ be the smallest integer such that $g_1\in L^{i,q}(x+c_1y)$. Then $\Phi_{i,q}(x+c_1y)=\ip{U_{g_1}w_i}{x+c_1y}<\Phi_{i,j}(x+c_1y)$. By \Cref{lemPhi} (b)(ii), 
\[ \left| \cup_{r\leq j} L^{i,r}(x+c_1y) \right|=q-1\geq j, \]
and $g_1\not\in\cup_{r\leq j}L^{i,r}(x+c_1y)$. 
On the other hand, from \Cref{lemPhi} (a), \Cref{eq:ineq1}, 
\[
\left| \{ h\in G ~,~\ip{U_{h}w_i}{x+c_2y}>\Phi_{i,j}(x+c_2y)\} \right|\leq j-1
\]
Hence
\[
\cup_{r\leq j} L^{i,r}(x+c_1y) \setminus 
\{ h\in G ~,~\ip{U_{h}w_i}{x+c_2y}>\Phi_{i,j}(x+c_2y)\}
\neq \emptyset
\]
Therefore there exists $h \in \cup_{r\leq j} L^{i,r}(x+c_1y)$ such that  
\begin{equation}
\label{eq:qq1}
\ip{U_{h}w_i}{x+c_2y}\leq \Phi_{i,j}(x+c_2y)=
\ip{U_{g_1}w_i}{x+c_2y}.
\end{equation}

On the other hand, by \Cref{lemnest}, $g_1 \in L^{i,j}(x)$.

But if $\ip{U_{h}w_i}{x}- \ip{U_{g_1}w_i}{x}>0$ then
\[\ip{U_{h}w_i}{x+c_2y}- \ip{U_{g_1}w_i}{x+c_2y}\geq \norm{w_i}(\Delta^{i,j}(x)- 2c_2\norm{y})>0 \]
which is a contradiction with (\ref{eq:qq1}). 
If  $\ip{U_{h}w_i}{x}- \ip{U_{g_1}w_i}{x}<0$ then
\[\ip{U_{g_1}w_i}{x+c_1y}-\ip{U_{h}w_i}{x+c_1y} \geq \norm{w_i}(\Delta^{i,j}(x)- 2c_1\norm{y})>0 \]
which is a contradiction with $h\in\cup_{r\leq j}L^{i,r}(x+c_1y)$. 
 Therefore $\ip{U_{h}w_i}{x}= \ip{U_{g_1}w_i}{x}$ and thus 
 $h \in L^{i,j}(x)$.
But then
\begin{align*}
 0 \geq \ip{U_{h}w_i}{x+c_2y}- \ip{U_{g_1}w_i}{x+c_2y} &=
 \ip{U_{h}w_i}{c_2y}- \ip{U_{g_1}w_i}{c_2y}\\&
=c_2(\ip{U_{h}w_i}{y}- \ip{U_{g_1}w_i}{y})\\
\end{align*}
and
\begin{align*}
0 <\ip{U_{h}w_i}{x+c_1y}- \ip{U_{g_1}w_i}{x+c_1y} &=
 \ip{U_{h}w_i}{c_1y}- \ip{U_{g_1}w_i}{c_1y}\\ &=
c_1(\ip{U_{h}w_i}{y}- \ip{U_{g_1}w_i}{y}).\\
\end{align*}
Which contradict each other since $c_1,c_2>0$.
   
\end{proof}

\begin{lem} \label{lemdelta}
For any $w_1,\ldots,w_p\in\Vc\setminus\{0\}$ and $x\in\Vc$, the sets $L^{i,j}(x)$ and perturbation bounds $\Delta^{i,j}(x)$ have the following properties:
\begin{enumerate}
\item For any $t>0$, $L^{i,j}(tx)=L^{i,j}(x)$. 
\item  For any $i\in[p]$, $j\in [N]$, and $t>0$, $\Delta^{i,j}(tx)=t\Delta^{i,j}(x)$. 
\item For any $i\in[p]$, $j\in [N]$, and $x\in\Vc\setminus\{0\}$, $\Delta^{i,j}(x)>0$.
\end{enumerate} 
\end{lem}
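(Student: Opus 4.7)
The plan is to handle the three items in order, exploiting the fact that positive scaling preserves the sort order produced by $\Trun$ and that all relevant sets are finite.

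For part (1), the first step is to observe that for $t>0$ and any $g\in G$, $\ip{U_g w_i}{tx} = t\ip{U_g w_i}{x}$, so multiplying every coordinate of the vector $(\ip{U_g w_i}{x})_{g\in G}$ by the positive scalar $t$ leaves every ordering inequality unchanged. Consequently $\Trun(\ip{U_g w_i}{tx})_{g\in G} = t\,\Trun(\ip{U_g w_i}{x})_{g\in G}$, and in particular $\Phi_{i,j}(tx)=t\Phi_{i,j}(x)$. Then $g\in L^{i,j}(tx)$ iff $\ip{U_gw_i}{tx}=\Phi_{i,j}(tx)$, which, after dividing by $t>0$, is equivalent to $g\in L^{i,j}(x)$.

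Part (2) is a direct corollary of (1), but the two-case definition of $\Delta^{i,j}$ forces a case split. If $L^{i,j}(x)\neq G$, then by (1) also $L^{i,j}(tx)\neq G$, and
\[
\Delta^{i,j}(tx)=\frac{1}{\norm{w_i}}\min_{g\notin L^{i,j}(tx)}\bigl|\ip{U_g w_i}{tx}-\Phi_{i,j}(tx)\bigr| = \frac{t}{\norm{w_i}}\min_{g\notin L^{i,j}(x)}\bigl|\ip{U_g w_i}{x}-\Phi_{i,j}(x)\bigr| = t\Delta^{i,j}(x).
\]
If $L^{i,j}(x)=G$, then by (1) also $L^{i,j}(tx)=G$, and $\Delta^{i,j}(tx)=\norm{tx}/\norm{w_i}=t\norm{x}/\norm{w_i}=t\Delta^{i,j}(x)$.

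Part (3) is the only item that uses the assumption $x\neq 0$, and again one splits on whether $L^{i,j}(x)$ equals $G$. If $L^{i,j}(x)=G$, then $\Delta^{i,j}(x)=\norm{x}/\norm{w_i}>0$ because $x\neq 0$ and, by standing assumption, $w_i\neq 0$. If $L^{i,j}(x)\neq G$, then the complement $G\setminus L^{i,j}(x)$ is a \emph{finite nonempty} set, and by the very definition of $L^{i,j}(x)$ each term $|\ip{U_g w_i}{x}-\Phi_{i,j}(x)|$ with $g$ outside $L^{i,j}(x)$ is strictly positive; the minimum of finitely many strictly positive numbers is strictly positive.

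None of the three items present any serious obstacle; the only thing that requires mild care is keeping the two branches of the piecewise definition of $\Delta^{i,j}$ consistent with the conclusion of part (1), which is exactly the point that makes the case split in parts (2) and (3) clean. No additional machinery beyond the definitions in \eqref{LIJ} and \eqref{DIJ} is needed.
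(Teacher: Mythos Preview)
Your proof is correct and follows essentially the same approach as the paper's own proof, which also relies on the homogeneity $\Phi_{i,j}(tx)=t\Phi_{i,j}(x)$ and the observation that $\Delta^{i,j}(x)$ is a minimum over a finite set of strictly positive numbers. The paper's proof is much terser, but your explicit handling of the two cases in the definition of $\Delta^{i,j}$ simply fills in the details the paper leaves implicit.
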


\begin{proof}

1.,2. For $t>0$, $\Phi_{i,j}(tx)=t\Phi_{i,j}(x)$, from where the claims follow from the definitions of $L^{i,j}(x)$ and $\Delta^{i,j}(x)$.

3. This claim follows from definitions of $\Delta_{i,j}$ which is the minimum of a finite set of positive numbers.

\end{proof}

\begin{lem}\label{lemhelplow}
Fix $w_i\in\Vc\setminus\{0\}$ and $j \in [N]$.
For any $k>1$, fix $z_1\in\Vc$ of unit norm, $\norm{z_1}=1$, and choose arbitrary $z_2,...,z_k$ that satisfy only the norm conditions 
\[\norm{z_{l+1}} 
\le  \min(\frac{1}{4}\Delta^{i,j}(\sum_{r=1}^l z_r),\frac{1}{4}\norm{z_l})~\forall l \in [k-1].\] 

For any scalars $a_1,\dots,a_k \in \left(1-\frac{1}{16k} \Delta^{i,j}(\sum_{r=1}^kz_r),1+\frac{1}{16k} \Delta^{i,j}(\sum_{r=1}^k z_r)\right)$ the following hold true: 
\begin{enumerate}
\item If $g_1,g_2 \in L^{i,j}(\sum_{r=1}^k z_r)$ then
\[ \ip{U_{g_1}w_i}{ \sum_{r=1}^k a_rz_r}
 =
 \ip{U_{g_2}w_i}{ \sum_{r=1}^k a_rz_r}.\]

 \item 
\begin{equation}
L^{i,j}(\sum_{r=1}^k a_r z_r) = L^{i,j}(\sum_{r=1}^k z_r),
\end{equation}
\item    
\begin{equation}
\label{eq:Deltas}
\frac{1}{4}\Delta^{i,j}(\sum_{r=1}^k  a_r z_r) <  \Delta^{i,j}(\sum_{r=1}^k  z_r) < 4\Delta^{i,j}(\sum_{r=1}^k a_r z_r).
\end{equation}
\item 

\begin{equation}
   \bigcup_{l\leq j}L^{i,l}(\sum_{r=1}^k a_r z_r) = \bigcup_{l\leq j}L^{i,l}(\sum_{r=1}^k z_r),
   \end{equation}
\begin{equation}   
\bigcup_{l\geq j}L^{i,l}(\sum_{r=1}^k a_r z_r) = \bigcup_{l\geq j}L^{i,l}(\sum_{r=1}^k z_r).
\end{equation}
\item For every $e \in \Vc$ with $\norm{e}< \frac{1}{16}\Delta^{i,j}(\sum_{r=1}^k z_r)$,
\begin{equation}
\label{Eq:lije}
L^{i,j}(\sum_{r=1}^k a_r z_r+e) = L^{i,j}(\sum_{r=1}^k z_r+e)
\end{equation}
\end{enumerate}

\end{lem}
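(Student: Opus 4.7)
The plan is to derive all five claims from two quantitative estimates together with repeated application of \Cref{lemnest} and \Cref{lemnest2}. First I set $x = \sum_{r=1}^k z_r$ and $x' = \sum_{r=1}^k a_r z_r$, so that $x' - x = \sum_{r=1}^k (a_r - 1) z_r$. From $\|z_{l+1}\| \leq \frac{1}{4}\|z_l\|$ with $\|z_1\|=1$, the geometric bound $\sum_{r=1}^k \|z_r\| \leq 4/3$ combined with $|a_r - 1| < \frac{1}{16k}\Delta^{i,j}(x)$ yields the key estimate $\|x'-x\| < \frac{1}{12k}\Delta^{i,j}(x) < \frac{1}{4}\Delta^{i,j}(x)$. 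Statement~2 then follows immediately from \Cref{lemnest2} applied with this $x$, perturbation $y = x'-x$, and scalars $c_1 = 1$, $c_2 = 0$.

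For statement~1, I would establish the stronger fact that any $g_1, g_2 \in L^{i,j}(x)$ also lie in $L^{i,j}(\sum_{r=1}^l z_r)$ for every $l \in [k]$. The other half of the recurrence, $\|z_{l+1}\| \leq \frac{1}{4}\Delta^{i,j}(\sum_{r=1}^l z_r)$, together with $\|z_{l+1}\| \leq \frac{1}{4}\|z_l\|$ yields $\|\sum_{r=l+1}^k z_r\| \leq \frac{4}{3}\|z_{l+1}\| < \frac{1}{2}\Delta^{i,j}(\sum_{r=1}^l z_r)$, so \Cref{lemnest} forces $L^{i,j}(x) \subset L^{i,j}(\sum_{r=1}^l z_r)$. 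Consequently $\ip{U_{g_1}w_i}{\sum_{r=1}^l z_r} = \ip{U_{g_2}w_i}{\sum_{r=1}^l z_r}$ for every $l$; subtracting consecutive equalities gives $\ip{U_{g_1}w_i}{z_l} = \ip{U_{g_2}w_i}{z_l}$ individually, and linearity of inner products in the $a_r$ completes statement~1.

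For statement~3, I pick any $h \in L^{i,j}(x) = L^{i,j}(x')$ (available by statement~2), so that $\Phi_{i,j}(x') - \Phi_{i,j}(x) = \ip{U_h w_i}{x'-x}$. For $g \notin L^{i,j}(x)$ the identity $\ip{U_g w_i}{x'} - \Phi_{i,j}(x') = \ip{U_g w_i - U_h w_i}{x} + \ip{U_g w_i - U_h w_i}{x'-x}$ shows the perturbation costs at most $2\|w_i\|\,\|x'-x\|$; dividing by $\|w_i\|$ and minimizing yields $\Delta^{i,j}(x)$ and $\Delta^{i,j}(x')$ agreeing up to a factor of $5/6$, much finer than the required $4$ (the degenerate case $L^{i,j}(x)=G$ is treated separately using $\Delta^{i,j}(x)=\|x\|/\|w_i\|$). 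Statement~4 combines statement~2 with the inclusions supplied by \Cref{lemnest}: writing $A_j(x)=\{g:\ip{U_g w_i}{x}\geq\Phi_{i,j}(x)\}$ and $B_j(x)=\{g:\ip{U_g w_i}{x}>\Phi_{i,j}(x)\}$, one checks via \Cref{lemPhi}(b) that $A_j(x)=\bigcup_{l\leq j}L^{i,l}(x)$, while \Cref{lemnest} applied with $y=x'-x$ gives $B_j(x)\subset B_j(x')$ and $A_j(x')\subset A_j(x)$. Since $A_j=B_j\cup L^{i,j}$ and $L^{i,j}(x)=L^{i,j}(x')$, the chain $A_j(x)=B_j(x)\cup L^{i,j}(x)\subset B_j(x')\cup L^{i,j}(x')=A_j(x')$ forces equality, and the dual argument handles $\bigcup_{l\geq j}L^{i,l}$. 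Statement~5 requires two applications of \Cref{lemnest2}: first to $\sum z_r$ with perturbation $e$ (using $\|e\|<\frac{1}{16}\Delta^{i,j}(\sum z_r)$) to obtain $L^{i,j}(\sum z_r+e)=L^{i,j}(\sum z_r)$ and, via the statement-3 argument, the bound $\Delta^{i,j}(\sum z_r+e)\geq\frac{7}{8}\Delta^{i,j}(\sum z_r)$; then to $\sum z_r+e$ with perturbation $\sum(a_r-1)z_r$.

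The main obstacle is statement~1. Claims~2--5 all fall out of the uniform perturbation bound $\|x'-x\|<\frac{1}{12k}\Delta^{i,j}(x)$ and are essentially routine, but statement~1 genuinely uses both sides of the recursive hypothesis on $\|z_{l+1}\|$: without the bound $\|z_{l+1}\|\leq\frac{1}{4}\Delta^{i,j}(\sum_{r=1}^l z_r)$ one cannot iterate \Cref{lemnest} to propagate the equality $\ip{U_{g_1}w_i}{\,\cdot\,}=\ip{U_{g_2}w_i}{\,\cdot\,}$ from the aggregate $x$ down to each individual $z_l$, which is what makes linearity in $a_1,\ldots,a_k$ effective.
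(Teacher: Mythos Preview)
Your treatments of statements~1, 3, and 4 are correct and close in spirit to the paper's (your argument for~4 via $A_j=B_j\cup L^{i,j}$ combined with the two inclusions from \Cref{lemnest} is in fact tidier than the paper's). The problem is the repeated use of \Cref{lemnest2} with $c_2=0$, on which your proofs of statements~2 and~5 both rest.

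\Cref{lemnest2} explicitly requires $c_1,c_2>0$, and the would-be extension to $c_2=0$---namely that $L^{i,j}(x+y)=L^{i,j}(x)$ whenever $\norm{y}<\frac14\Delta^{i,j}(x)$---is \emph{false}. Take $G=S_2$ swapping coordinates on $\R^2$, $w_i=(1,0)$, $x=(1,1)/\sqrt2$: then $L^{i,1}(x)=G$ and $\Delta^{i,1}(x)=1$, yet for $y=(1/20,0)$ one has $L^{i,1}(x+y)=\{\mathrm{id}\}\subsetneq G$. For statement~2 this gap is easily patched using your own statement~1: for $g_1\in L^{i,j}(\sum z_r)$ and any $g_2\in L^{i,j}(\sum a_rz_r)\subset L^{i,j}(\sum z_r)$ (inclusion by \Cref{lemnest}), statement~1 gives $\ip{U_{g_1}w_i}{\sum a_rz_r}=\ip{U_{g_2}w_i}{\sum a_rz_r}=\Phi_{i,j}(\sum a_rz_r)$, so $g_1\in L^{i,j}(\sum a_rz_r)$. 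This is exactly how the paper proceeds, and it explains why the recursive hypothesis on the $\norm{z_{l+1}}$ is needed for~2 and not just for~1: the equality of $L^{i,j}$ under the perturbation $\sum(a_r-1)z_r$ is a consequence of its special form, not of its small norm alone.

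For statement~5 the error is not repairable within your strategy. Your plan hinges on the intermediate equality $L^{i,j}(\sum z_r+e)=L^{i,j}(\sum z_r)$, but the same counterexample (take $z_1=(1,1)/\sqrt2$, $k=2$, $z_2=0$, $e=(1/20,0)$) shows this fails even though statement~5 itself holds for these data. The paper avoids this by comparing the two perturbed sets directly: it fixes $g\in L^{i,j}(\sum z_r+e)$ and $h\notin L^{i,j}(\sum z_r+e)$ and splits into the cases $h\in L^{i,j}(\sum z_r)$ and $h\notin L^{i,j}(\sum z_r)$. In the first case both $g,h$ lie in $L^{i,j}(\sum z_r)=L^{i,j}(\sum a_rz_r)$, so the $\sum a_rz_r$ contributions match and the discrepancy is carried entirely by $e$; in the second case the $\Delta^{i,j}$ gap dominates both the $e$-term and the $\sum(a_r-1)z_r$-term. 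You need a comparable case analysis---collapsing everything to ``small perturbation of $\sum z_r$'' discards precisely the structure that distinguishes the allowed perturbation $\sum(a_r-1)z_r$ from a generic one like $e$.
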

\begin{rem}    
Notice this Lemma allows us to choose arbitrary directions for vectors $z_2,...,z_k$. Conclusions of the lemma hold true even for $z_2=\ldots=z_k=0$. On the other hand, the norm conditions $\norm{z_{l+1}}\leq \frac{1}{4}\norm{z_l}$ and $\norm{z_1}=1$ prevent $\sum_{l=1}^k z_l$ from ever reaching 0. Hence $\Delta({i,j}(\sum_{r=1}^l z_r) >0$ for all $l\in[k]$.
\end{rem}

 \begin{proof}
 \mbox{}

{\bf 1}.
Note that by norm conditions on $z_r$, \Cref{lemnest} and \Cref{lemnest2}, if $g_1,g_2 \in  L^{i,j}(\sum_{r=1}^k z_r)$ then  $ \ip{U_{g_1}w_i}{\sum_{r=1}^l z_r}= \ip{U_{g_2}w_i}{\sum_{r=1}^l z_r}~\forall l \in [k]$. Starting with $l=1$ and proceeding recursively we get $ \ip{U_{g_1}w_i}{ z_r}= \ip{U_{g_2}w_i}{ z_r}~\forall r \in [k]$. Therefore $\ip{U_{g_1}w_i}{a_rz_r}= \ip{U_{g_2}w_i}{a_rz_r}~\forall r \in [k]$ and so $\ip{U_{g_1}w_i}{ \sum_{r=1}^k a_rz_r}=\ip{U_{g_2}w_i}{ \sum_{r=1}^k a_rz_r}.$

{\bf 2}. 

"$\subset$".  
From \Cref{lemnest} we have that
$ L^{i,j}(\sum_{r=1}^k a_r z_r)\subset  L^{i,j}(\sum_{r=1}^k z_r)$.

"$\supset$".  
Let $g_1 \in L^{i,j}(\sum_{r=1}^k z_r)$.
Take any $g_2 \in L^{i,j}(\sum_{r=1}^k a_rz_r)$. By \Cref{lemnest}, $g_2 \in L^{i,j}(\sum_{r=1}^k z_r)$, but from part 1 of this Lemma,
 \[\ip{U_{g_1}w_i}{ \sum_{r=1}^k a_rz_r}=\ip{U_{g_2}w_i}{ \sum_{r=1}^k a_rz_r}.\]
 Therefore $g_1\in L^{i,j}(\sum_{r=1}^k a_r z_r)$, which proves the other inclusion.

{\bf 3}.

Case 1. Suppose $L^{i,j}(\sum_{r=1}^k z_r)=G$. 
From \cref{lemnest} we know that 
\[G=L^{i,j}(\sum_{r=1}^k z_r) \subset  L^{i,j}(\sum_{r=1}^{k-1} z_r) \subset \dots \subset L^{i,j}( z_1).\]
Τherefore, $L^{i,j}(z_r)=G,~\forall r \in [k]$ and consequently $L^{i,j}( a_r z_r)=G,~\forall r \in [k]$.

Moreover, $a_1,\dots a_k \in (7/8,9/8)$.
Therefore,
\begin{align*}  
\Delta^{i,j}(\sum_{r=1}^{k} a_rz_r)= \frac{1}{\norm{w_i}}\norm{\sum_{r=1}^{k} a_rz_r}  \le \frac{9}{8\norm{w_i}}\left(\sum_{r=1}^{k}\norm{z_r}\right)  <  \frac{3}{2\norm{w_i}}\norm{z_1}
\\ 
\end{align*}
and
\begin{align*}  
\Delta^{i,j}(\sum_{r=1}^{k} a_rz_r)= \frac{1}{\norm{w_i}}\norm{\sum_{r=1}^{k} a_rz_r}  \geq  \frac{1}{\norm{w_i}} \left(\frac{7}{8}\norm{z_1}-\frac{9}{8}\sum_{r=2}^{k}\norm{z_r}\right)  > \frac{1}{2\norm{w_i}} \norm{z_1}.
\\ 
\end{align*}
Similarly, 

\begin{align*}  
\Delta^{i,j}(\sum_{r=1}^{k} z_r)= \frac{1}{\norm{w_i}}\norm{\sum_{r=1}^{k} z_r}  \le  \frac{1}{\norm{w_i}}\left(\sum_{r=1}^{k}\norm{z_r}\right)   <  \frac{4}{3\norm{w_i}}\norm{z_1}
\\ 
\end{align*}
and
\begin{align*}  
\Delta^{i,j}(\sum_{r=1}^{k} z_r)= \frac{1}{\norm{w_i}}\norm{\sum_{r=1}^{k} z_r}  \geq \frac{1}{\norm{w_i}} \left( \norm{z_1}-\sum_{r=2}^{k}\norm{z_r}\right) > \frac{2}{3\norm{w_i}} \norm{z_1}.
\\ 
\end{align*}

So,
\[ 
\frac{1}{4}\Delta^{i,j}(\sum_{r=1}^{k} z_r) \le \Delta^{i,j}(\sum_{r=1}^{k} a_rz_r) \le 4\Delta^{i,j}(\sum_{r=1}^{k} z_r).
\]

Case 2.
Now assume that $L^{i,j}(\sum_{r=1}^k z_r) \not=G$.
Fix $g_1 \in G$ that achieves
$\Delta^{i,j}(\sum_{r=1}^k a_rz_r)$, i.e.
\[\frac{1}{\norm{w_i}}|\ip{U_{g_1} w_i}{\sum_{r=1}^k a_rz_r}-\Phi_{i,j}(\sum_{r=1}^k a_rz_r)|= \Delta^{i,j}(\sum_{r=1}^k a_rz_r)\]
and $g_2 \in L^{i,j}(\sum_{r=1}^k a_rz_r)$.

Then

\begin{align*}
   \Delta^{i,j}(\sum_{r=1}^k a_rz_r) = & \frac{1}{\norm{w_i}}\left|\ip{U_{g_1} w_i}{\sum_{r=1}^k a_rz_r}-\ip{U_{g_2} w_i}{\sum_{r=1}^k a_rz_r}\right|\\ \geq  & \frac{1}{\norm{w_i}}\left|\ip{U_{g_1}w_i}{\sum_{r=1}^k z_r}-\ip{U_{g_2}w_i}{\sum_{r=1}^k z_r}\right| -\sum_{r=1}^k 2 |1-a_r| \norm{z_r}\\ \geq &\Delta^{i,j}(\sum_{r=1}^k z_r)-2\sum_{r=1}^k  |1-a_r| \norm{z_r}>\frac{1}{2}\Delta^{i,j}(\sum_{r=1}^k z_r).
  \end{align*}

and,
\begin{align*}
   \Delta^{i,j}(\sum_{r=1}^k a_rz_r) = & \frac{1}{\norm{w_i}}|\ip{U_{g_1} w_i}{\sum_{r=1}^k a_rz_r}-\ip{U_{g_2} w_i}{\sum_{r=1}^k a_rz_r}|\\ \le  & \frac{1}{\norm{w_i}}|\ip{U_{g_1}w_i}{\sum_{r=1}^k z_r}-\ip{U_{g_2}w_i}{\sum_{r=1}^k z_r}| +\sum_{r=1}^k 2 |1-a_r| \norm{z_r}\\ \geq &\Delta^{i,j}(\sum_{r=1}^k z_r)+2\sum_{r=1}^k  |1-a_r| \norm{z_r}\le 2\Delta^{i,j}(\sum_{r=1}^k z_r).\\
  \end{align*}

 Therefore,

  \[\frac{1}{2}\Delta^{i,j}(\sum_{r=1}^k a_rz_r) \le \Delta^{i,j}(\sum_{r=1}^k a_rz_r) \le 2\Delta^{i,j}(\sum_{r=1}^k a_rz_r).\]

{\bf 4}.

Let $g_1 \in   \cup_{l=1}^jL^{i,l}(\sum_{r=1}^kz_r)$.
If $g_1 \in L^{i,j}(\sum_{r=1}^kz_r)$ then we just showed that $g_1 \in L^{i,j}(\sum_{r=1}^k a_rz_r) $.
If $g_1 \notin L^{i,j}(\sum_{r=1}^kz_r)$ then  for every $h \in \cup_{l\geq j}L^{i,l}L^{i,l}(\sum_{r=1}^k z_r)$
\begin{align*}
\ip{U_{g_1} w_i}{\sum_{r=1}^k a_r z_r}-\ip{U_{h} w_i}{\sum_{r=1}^k a_r z_r} &\geq \ip{U_{g_1} w_i}{\sum_{r=1}^k z_r}-\norm{w_i}\norm{\sum_{r=1}^k|1-a_r|z_r}\\ &>\norm{w_i}( \Delta^{i,j}(\sum_{r=1}^k z_r)-\norm{\sum_{r=1}^k|1-a_r|z_r}) \\ &>0.
\end{align*}
But from \Cref{lemPhi} $|\cup_{l\geq j}L^{i,l}L^{i,l}(\sum_{r=1}^k z_r)|\geq N-j+1$. So $g_1 \in \cup_{l=1}^jL^{i,l}(\sum_{r=1}^kz_r)$.
Therefore,
\[  \cup_{l\le j}L^{i,l}(\sum_{r=1}^kz_r) \subset \cup_{l \le j}L^{i,l}(\sum_{r=1}^ka_rz_r).\]

The other inclusions are obtained similarly.

{\bf 5}.
We prove the equality between complements: 
\[\left(L^{i,j}(\sum_{r=1}^k z_r +e)\right)^c = \left(L^{i,j}(\sum_{r=1}^k a_rz_r+e)\right)^c.\]

First notice that by \Cref{lemnest} we have that $L^{i,j}(\sum_{r=1}^k z_r+e) \subset L^{i,j}(\sum_{r=1}^k z_r)$ and $L^{i,j}(\sum_{r=1}^k a_rz_r+e) \subset L^{i,j}(\sum_{r=1}^k a_rz_r)$.
    From \Cref{lemhelplow} part (2) we have that 
$L^{i,j}(\sum_{r=1}^k z_r)= L^{i,j}(\sum_{r=1}^k a_rz_r)$.
Take $g\in L^{i,j}(\sum_{r=1}^k z_r+e)$ and $h\in\left(L^{i,j}(\sum_{r=1}^k z_r+e)\right)^c$.
Hence $\ip{U_g w_i}{\sum_{r=1}^k z_r+e}\neq \ip{U_h w_i}{\sum_{r=1}^k z_r+e}$.

 There are two cases: 

Case 1. $h\in L^{i,j}(\sum_{r=1}^k z_r)\setminus L^{i,j}(\sum_{r=1}^k z_r+e)$. Thus $\ip{U_g w_i}{\sum_{r=1}^k z_r}=\ip{U_h w_i}{\sum_{r=1}^k z_r}$.
Therefore $\ip{U_g w_i}{e}\neq\ip{U_h w_i}{e}$. On the other hand $h\in L^{i,j}(\sum_{r=1}^k a_r z_r)$ since 
 $L^{i,j}(\sum_{r=1}^k z_r)= L^{i,j}(\sum_{r=1}^k a_rz_r)$. 
 Hence $\ip{U_g w_i}{\sum_{r=1}^k a_r z_r}=\ip{U_h w_i}{\sum_{r=1}^k a_r z_r}$, which implies 
$\ip{U_g w_i}{\sum_{r=1}^k a_rz_r+e} \neq \ip{U_h w_i}{\sum_{r=1}^k a_rz_r+e}$. Thus $h\in \left(L^{i,j}(\sum_{r=1}^k a_rz_r+e)\right)^c$.

Case 2. $h\in G\setminus L^{i,j}(\sum_{r=1}^k z_r)$.  Thus
$\ip{U_g w_i}{\sum_{r=1}^k z_r}\neq\ip{U_h w_i}{\sum_{r=1}^k z_r}$. In this case
$|\ip{U_h w_i}{\sum_{r=1}^k z_r} - \ip{U_g w_i}{\sum_{r=1}^k z_r}|\geq \norm{w_i}\Delta^{i,j}(\sum_{r=1}^k z_r)$ and 
\[ \left|\ip{U_h w_i}{\sum_{r=1}^k a_rz_r+e} - \ip{U_g w_i}{\sum_{r=1}^k a_rz_r+e}\right| \geq \]

\begin{align*}
     \geq
\left| \ip{U_h w_i}{\sum_{r=1}^k z_r} - \ip{U_g w_i}{\sum_{r=1}^k z_r} \right|&-
\left| 
\ip{U_h w_i}{\sum_{r=1}^k (a_r-1)z_r+e}
\right|\\ &-
\left| 
\ip{U_g w_i}{\sum_{r=1}^k (a_r-1)z_r+e}
\right|
\geq 
\end{align*}
\[
\geq \norm{w_i}\left[ \Delta^{i,j}(\sum_{r=1}^k z_r) - 2 \left( \sum_{r=1}^k |a_r-1| \norm{z_r} + \norm{e} \right)\right]>0\hspace{20mm}(*)\]
Hence again $h\in \left(L^{i,j}(\sum_{r=1}^k a_rz_r+e)\right)^c$.
This proves that $\left(L^{i,j}(\sum_{r=1}^k z_r+e)\right)^c\subset \left(L^{i,j}(\sum_{r=1}^k a_r z_r+e)\right)^c$. 
The reverse inclusion is shown similarly, with $\Delta^{i,j}(\sum_{r=1}^kz_r)$ replaced by $\Delta^{i,j}(\sum_{r=1}^k a_rz_r)$ in (*). 
 \end{proof}

\subsubsection{Positivity of the Lower Lipschitz Constant}

Now we prove that the lower Lipschitz bound must be positive if the embedding map $ \hat{\Phi}_{\bw,S}$ is injective. We do so by contradiction. 

The strategy is the following:
 Assume the lower Lipschitz constant is zero. 
\begin{itemize}
    
\item First we find a unit norm vector $z_1$ where the local lower Lipschitz constant vanishes.

\item Next we construct inductively a sequence of non-zero vectors $z_2,z_3,...,z_k$ so that the local lower Lipschitz constant vanishes in a convex set of
the form $\{\sum_{r=1}^k a_r z_r~,~|a_r-1|<\delta\}$
 for some $\delta>0$ small enough, and where sets $L^{i,j}$ remain constant. These steps are depicted in Figure \ref{robarm} that suggests a "robotic arm" procedure (this name was suggested by the authors of \cite{qaddura2024stable}).

\item For $k=d$ this construction defines a non-empty open set where the local lower Lipschitz constant vanishes and $L^{i,j}$ remain constants. This allows us to construct $u,v\neq 0$ so that $x=u+\sum_{r=1}^d z_r$ and $y=v+\sum_{r=1}^d z_r$ satisfy $x\not\sim y$ and yet $\Phi_{\bw,S}(x)=\Phi_{\bw,S}(y)$. This contradicts the injectivity hypothesis.
\end{itemize}

First, we show that if the lower bound is zero then it can be achieved locally. 
With a slightly abuse of notation we define 

\begin{lem}\label{lemassum}
    Fix $\bw=(w_1,\dots,w_p)\in \Vc^p$ and $S\subset [N]\times [p]$.
    If the lower Lipschitz constant of map $\Phi_{\bw,S}$ is zero, then there exist sequences $(x_{n})_{n}$, $(y_{n})_{n}$ in $\Vc$ such that
    \[\lim_{n \to \infty} \frac{\norm{\Phi_{\bw,S}(x_{n})-\Phi_{\bw,S}(y_{n})}^2}{d(x_{n},y_{n})^2}=0\]
 and, additionally, satisfy the following relations:
\begin{enumerate}
 \item (convergence) They share a common limit $z_1$,  
 \begin{equation} 
 \label{eq17}
 \lim_{n\rightarrow\infty}x_{n} = \lim_{n\rightarrow\infty} y_{n}=z_1, 
 \end{equation} 
 with $\norm{z_1}=1$;
 
 \item (boundedness) For all $k$:
    \begin{gather}
    \label{eq18}
        \norm{x_{n}}=1 \\ 
        \label{eq19}
        \norm{y_{n}} \le 1
    \end{gather}

    \item (alignment) For all $k$:
    \begin{gather}
    \label{eq20}
        \norm{x_{n}-y_{n}}= \min_{g \in G}\norm{x_{n}-U_gy_{n}}\\
     \label{eq21}
        \norm{x_{n}-z_1}= \min_{g \in G}\norm{x_{n}-U_gz_1} \\
     \label{eq22}
        \norm{y_{n}-z_1}= \min_{g \in G}\norm{y_{n}-U_gz_1}    \end{gather}
\end{enumerate}
\end{lem}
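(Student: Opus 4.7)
The plan is to construct the required sequences by successively normalizing an arbitrary witnessing pair—rescaling, swapping, and orbit alignment—and then extracting convergent subsequences. The injectivity of $\hat{\Phi}_{\bw,S}$ (inherited from the enclosing Theorem~\ref{theormainstab}) will only intervene at the very end, to identify the common limit.

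First I would pick sequences $(\tilde x_n),(\tilde y_n)\subset\Vc$ with $[\tilde x_n]\neq[\tilde y_n]$ along which the ratio $\|\Phi_{\bw,S}(\tilde x_n)-\Phi_{\bw,S}(\tilde y_n)\|^2/\dd(\tilde x_n,\tilde y_n)^2$ tends to zero. Because both $\Phi_{\bw,S}$ and $\dd$ are positively $1$-homogeneous, this ratio is invariant under a common positive rescaling, and it is manifestly symmetric in its two arguments. Swapping if needed, assume $\|\tilde x_n\|\geq\|\tilde y_n\|$, which is positive for large $n$ (otherwise both vanish, contradicting $[\tilde x_n]\neq[\tilde y_n]$). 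Scaling by $1/\|\tilde x_n\|$ produces sequences that satisfy (\ref{eq18}) and (\ref{eq19}) while preserving the vanishing ratio.

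Second, for each $n$ I would replace $y_n$ by $U_{g_n}y_n$ where $g_n\in G$ minimizes $g\mapsto\|x_n-U_g y_n\|$; such $g_n$ exists because $G$ is finite. This leaves $[y_n]$, $\|y_n\|$, and $\Phi_{\bw,S}(y_n)$ unchanged while enforcing (\ref{eq20}). By compactness of the unit sphere and of the closed unit ball in $\Vc$, I then pass to subsequences along which $x_n\to z_1$ with $\|z_1\|=1$ and $y_n\to w$ with $\|w\|\leq 1$. Continuity of $\Phi_{\bw,S}$, together with the bound $\dd(x_n,y_n)\leq\|x_n\|+\|y_n\|\leq 2$ and the vanishing ratio, forces $\|\Phi_{\bw,S}(x_n)-\Phi_{\bw,S}(y_n)\|\to 0$, hence $\Phi_{\bw,S}(z_1)=\Phi_{\bw,S}(w)$. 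Injectivity of $\hat{\Phi}_{\bw,S}$ then yields $w=U_h z_1$ for some $h\in G$.

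The core obstacle is upgrading the orbit equality $[w]=[z_1]$ to the genuine equality $w=z_1$ demanded by (\ref{eq17}); this is exactly why the alignment step (\ref{eq20}) was carried out before taking limits. Passing to the limit in $\|x_n-y_n\|\leq\|x_n-U_{h^{-1}}y_n\|$ gives $\|z_1-w\|\leq\|z_1-U_{h^{-1}}w\|=\|z_1-z_1\|=0$, forcing $w=z_1$. Finally, (\ref{eq21}) and (\ref{eq22}) hold on a tail of the sequences: since $G$ is finite, the representatives of $[z_1]$ distinct from $z_1$ sit at a definite positive distance from $z_1$, so once $x_n$ and $y_n$ are close enough to $z_1$ the trivial representative is automatically the nearest, and I retain only those indices.
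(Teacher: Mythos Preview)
Your argument is correct and follows the same broad outline as the paper---rescale by homogeneity, align within orbits, extract convergent subsequences, and invoke the injectivity of $\hat{\Phi}_{\bw,S}$ inherited from Theorem~\ref{theormainstab}---but the execution is more streamlined in two places. The paper first extracts limits $x_n\to x_\infty$, $y_n\to y_\infty$, uses injectivity to get $x_\infty\sim y_\infty$, and then carries out several successive orbit replacements (applying various $U_g$'s to $x_n$ and $y_n$, with multiple subsequence passes) to force the two limits to coincide and to arrange (\ref{eq20})--(\ref{eq21}). You instead impose the alignment (\ref{eq20}) \emph{before} passing to limits, which lets your one-line inequality $\|z_1-w\|\leq\|z_1-U_{h^{-1}}w\|=0$ pin down $w=z_1$ immediately. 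For (\ref{eq21}) and (\ref{eq22}) you use a clean tail argument based on the positive separation $\rho_0(z_1)=\min_{g\notin H(z_1)}\|z_1-U_gz_1\|$, whereas the paper handles (\ref{eq22}) through yet another finite-group subsequence extraction (ultimately arriving at the same fact, that the optimal $h_0$ lies in the stabilizer $H(z_1)$). Your route avoids most of the paper's group-element bookkeeping; the paper's version is more explicit about which orbit representatives are in play, but that extra information is not actually used downstream.
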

\begin{proof}
Because the lower Lipschitz bound of map $ \Phi_{\bw,S}$ is zero we have that
\[\inf_{\substack{x,y \in \Vc \\ x \nsim y}} \frac{\norm{\Phi_{\bw,S}(x)-\Phi_{\bw,S}(y)}^2}{\dis([x],[y])^2}=0.\]
Thus, we can find sequences $(x_n)_n,(y_n)_n \in \Vc$  such that
\[\displaystyle \lim_{n \to \infty} \frac{\norm{\Phi_{\bw,S}(x_n)-\Phi_{\bw,S}(y_n)}^2}{\dis([x_n],[y_n])^2}=0\]

Now, notice that for all $t>0$ we have $\Phi_{\bw,S}(tx) = t\Phi_{\bw,S}(x)$ and $\dis([tx],[ty])=t \dis([x],[y])$. So, for every $t>0$
\begin{align*}
	\frac{\norm{\Phi_{\bw,S}(x_n)-\Phi_{\bw,S}(y_n)}^2}{\dis([x_n],[y_n])^2} 
    \\ &=\frac{\norm{\Phi_{\bw,S}(tx_n)-\Phi_{\bw,S}(ty_n)}^2}{\dis([tx_n],[ty_n])^2}.
\end{align*}
By setting $t = \frac{1}{\max(\norm{x_n}, \norm{y_n})}$ we can always assume that both $x_n$ and $y_n$, lie in the unit ball, and what is more thanks to the symmetry of the formulas we can additionally assume that one of the sequences, say $x_n$, lies on unit sphere. In other words, $\norm{x_n}=1$ and $\norm{y_n}\le 1$ for all $n\in \N$.

Because of this, we can find a convergent subsequence $(x_{n_k})_k$ of $(x_{n})_n$ with $x_{n_k} \to x_{\infty}$. Similarly, we can find a convergent subsequence $(y_{n_{k_l}})_l$ of $(y_{n_k})_n$ with $y_{n_{k_l}} \to y_{\infty}$. Clearly, $x_{n_{k_l}} \to x_{\infty}$. For easiness of notation, we denote the sequences $(x_{n_{k_l}})_l$ and $(y_{n_{k_l}})_l$ by $(x_{n})_n$ and $(y_{n})_n$, respectively.

Next, suppose that $x_{\infty} \nsim y_{\infty}$. Then,
\[\frac{\norm{\Phi_{\bw,S}(x_{\infty})-\Phi_{\bw,S}(y_{\infty})}^2}{\dis([x_{\infty}],[y_{\infty}])^2}=\displaystyle \lim_{k \to \infty} \frac{\norm{\Phi_{\bw,S}(x_{n})-\Phi_{\bw,S}(y_{n})}^2}{\dis([x_{n}],[y_{n}])^2}=0,\]
and thus, $\Phi_{\bw,S}(x_{\infty})= \Phi_{\bw,S}(y_{\infty})$, which contradict the injectivity assumption. Hence, $x_{\infty} \sim y_{\infty}$.

Now, let us denote by $g_{\infty}$ a group element such that $x_{\infty}= U_{g_{\infty}}y_{\infty}$.
Observe that $\lim_{n \to \infty} \norm{x_{n}-U_{g_{\infty}}y_n} =0$.
For each $n \in \N$ there exists at least one element $g_n \in G$, which achieves the Euclidean distance between $x_n$ and $U_{g_{\infty}}y_n$, i.e. satisfying $\dis([x_n],[U_{g_\infty} y_n]) = \norm{x_n- U_{g_kg_{\infty}}y_n}$.
But $G$ is a finite group, meaning that, as $n$ goes to infinity, there must exist an element $g_0\in G$ for which $g_n=g_0$ for infinitely many $n$. Let $(n_m)_m$ be the sequence of all such indices. We see that $\dis([x_{n_m}],[U_{g_{\infty}}y_{n_m}])=\norm{x_{n_m}- U_{g_0g_{\infty}}y_{n_m}}$ for all $m\in \N$. Finally, for every $m \in \N$, let $g_m \in G$ be a group element that achieves the Euclidean distance between $x_{n_m}$ and $x_{\infty}$, that is
\[\dis([x_{n_m}],[x_{\infty}])=\norm{U_{g_m}x_{n_m}-x_{\infty}}.\]
Denote $U_{g_{m}}x_{n_m}$ by $x_{n}$ and $U_{g_{m}g_0g_{\infty}}y_{n_m}$ by $y_{n}$. 
So far we obtained two sequences $(x_n)_n$ and $(y_n)_n$ that satisfy (\ref{eq17}-\ref{eq21}).
Now let $h_n\in G$ denote a group element so that $d(y_n,z_1)=\norm{y_n-U_{h_n}z_1}$. Since $G$ is finite, pass to a subsequence (again indexed by $n$) so that $h_n=h_0$. Therefore $d(y_n,z_1)=\norm{y_n-U_{h_0}z_1}\leq \norm{y_k-z_1}$. But $\lim_{n \to \infty}y_n=z_1$. 
Thus $U_{h_0}z_1=z_1$.  This shows (\ref{eq22}) and the lemma is now proved.
\end{proof}

\begin{figure} 
    \centering
\begin{tikzpicture}[tdplot_main_coords, scale = 0.7]

\coordinate (P) at ({5},{0},{5});

\shade[ball color = lightgray, opacity = 0.5] (P) circle (1cm);
\draw (P) node[above] {$z_1$};

\fill (P) circle (2pt);

\draw[-stealth] (0,0,0) -- (6,0,0) node[anchor=north east]{$x$};
\draw[-stealth] (0,0,0) -- (0,6,0) node[anchor=north west]{$y$};
\draw[-stealth] (0,0,0) -- (0,0,6) node[anchor=south]{$z$};
\draw[dashed, gray] (0,0,0) -- (-1,0,0);
\draw[dashed, gray] (0,0,0) -- (0,-1,0);

\draw[thick, -stealth, nodes={opacity=0.5}] (0,0,0) -- (P);

\draw[dotted, thick, blue] ({5 + 2.5*cos(45)}, {0 + 2.5*sin(45)}, {5}) .. controls ({5 + 1.5*cos(10)}, {0 + 1.5*sin(10)}, {5}) and ({5 + 0.2}, {0.05}, {5}) .. (P);
\node[blue] at ({5 + 2.6*cos(45)}, {0 + 2.6*sin(45) - 0.8}, {5}) {$x_n$};

\draw[dotted, thick, red] ({5 + 2.5*cos(315)}, {0 + 2.5*sin(315)}, {5}) .. controls ({5 + 1.5*cos(350)}, {0 + 1.5*sin(350)}, {5}) and ({5 + 0.2}, {-0.05}, {5}) .. (P);
\node[red] at ({5 + 2.6*cos(315)}, {0 + 2.6*sin(315)}, {5}) {$y_n$};

\draw[fill = lightgray!50] (P) circle (0.5pt);

\end{tikzpicture}
    \caption{Local analysis of sequences $x_n$ and $y_n$ converging to $z_1$.}
\end{figure}

In what follows, we will denote by $H(z)$ the stabilizer group of $z$; recall that
\[H(z)=\{g \in G : U_gz=z\}.\]
 For a fixed vector $z$ we define the strictly positive number \[\rho_0(z)=\begin{cases}
     \min_{g \in G \setminus H(z)} \norm{z -U_gz},~\text{if}~H(z) \neq G\\
     \norm{z},~\text{if}~H(z) = G.
 \end{cases}\] 
Assume $N_0$ is large enough so that $d(x_{1,k},z_1)<\frac{1}{8}\rho_0(z_1)$ and $d(x_{1,k},y_{1,k})<\frac{1}{8}\rho_0(z_1)$ for all $k>N_0$. Then \[\norm{y_{1,k}-z_1} \leq \norm{y_{1,k}-x_{1,k}}+\norm{x_{1,k}-z_1}=d(y_{1,k},x_{1,k})+d(x_{1,k},z_1)<\frac{\rho_0(z_1)}{4}.\] 
 
\begin{lem}\label{lemalign}
    Assume that $\norm{u},\norm{v} < \frac{1}{4} \Delta_{0}(z_1)$ and let $x=z_1+u$ and $y=z_1+v$. Then, the following properties hold:
    \begin{enumerate}
        \item $\dis([x],[z_1])=\norm{u}$ and $\dis([y],[z_1])=\norm{v}$,
        \item $\dis([x],[y]) = \min_{g \in H(z_1)} \norm{u- U_gv} = \min_{g \in H(z_1)} \norm{U_gu-v}$, and
        \item the following are equivalent:
        \begin{enumerate}
            \item $\dis([x],[y])= \norm{u-v}$,
            \item $\norm{u-v} \le \norm{U_gu-v}$, for all $g \in H(z_1)$,
            \item $\ip{u}{v} \geq  \ip{U_gu}{v}$, for all $g \in H(z_1)$.
        \end{enumerate}
    \end{enumerate}
\end{lem}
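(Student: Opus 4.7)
The plan is to exploit the gap $\rho_0(z_1)$ (which I take to be the role of $\Delta_0(z_1)$ in the statement): since $\|u\|,\|v\|<\tfrac14\rho_0(z_1)$, any group element $g\notin H(z_1)$ moves $z_1$ by at least $\rho_0(z_1)$, while $u$ and $v$ combined perturb distances by less than $\tfrac12\rho_0(z_1)$. So every orbit minimization reduces to a minimization over the stabilizer $H(z_1)$.

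For Part 1, if $g\in H(z_1)$ then $\|x-U_gz_1\|=\|z_1+u-z_1\|=\|u\|$; if $g\notin H(z_1)$, the triangle inequality gives
\[ \|x-U_gz_1\|\;\geq\;\|z_1-U_gz_1\|-\|u\|\;\geq\;\rho_0(z_1)-\tfrac14\rho_0(z_1)\;>\;\|u\|. \]
Thus $\dis([x],[z_1])=\|u\|$, and the proof for $y$ is identical.

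For Part 2, the same dichotomy applies to $\|x-U_gy\|$. If $g\in H(z_1)$, then $\|x-U_gy\|=\|u-U_gv\|\leq\|u\|+\|v\|<\tfrac12\rho_0(z_1)$. If $g\notin H(z_1)$, then
\[ \|x-U_gy\|\;\geq\;\|z_1-U_gz_1\|-\|u\|-\|v\|\;>\;\tfrac12\rho_0(z_1), \]
so the minimum over $G$ agrees with the minimum over $H(z_1)$. This yields the first equality in Part 2; the second follows by applying unitarity (which preserves norms) and substituting $g\mapsto g^{-1}$, using that $H(z_1)$ is a subgroup.

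Part 3 is then a direct consequence of Part 2. Since $e\in H(z_1)$, we always have $\dis([x],[y])\leq\|u-v\|$, so (a) holds exactly when $\|u-v\|$ attains the minimum in Part 2, which is (b). For (b) $\Leftrightarrow$ (c), expand
\[ \|U_gu-v\|^2-\|u-v\|^2 \;=\; 2\langle u-U_gu,\,v\rangle \;=\; 2\bigl(\langle u,v\rangle-\langle U_gu,v\rangle\bigr), \]
using $\|U_gu\|=\|u\|$; the inequality in (b) holds iff the right-hand side is nonnegative, which is (c). I do not anticipate any serious obstacle: the only real content is the constant $\tfrac14$, which is calibrated precisely so that the strict inequality $\rho_0(z_1)-\|u\|-\|v\|>\|u\|+\|v\|$ rules out $g\notin H(z_1)$ in both minimizations.
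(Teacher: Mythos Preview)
Your proof is correct and follows essentially the same approach as the paper: both arguments split into the dichotomy $g\in H(z_1)$ versus $g\notin H(z_1)$, use the triangle inequality together with the bound $\|u\|+\|v\|<\tfrac12\rho_0(z_1)$ to rule out the latter, and then reduce Part 3 to an inner-product identity. Your identification of $\Delta_0(z_1)$ with $\rho_0(z_1)$ is correct, and your presentation is in fact somewhat cleaner than the paper's (which contains a few typos in this passage).
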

\begin{proof}
    \begin{enumerate}
        \item If $u=0$ then the claim follows.  If $u\not=0$, then
        $\dis([x],[z_]1) = \min_{g \in G}\norm{x-U_gz_1}= \min_{g \in G}\norm{z_1-U_gz_1+u} \le \norm{u}$.
        From the other hand, suppose that minimum is achieved for a permutation $g \in G$.
        If $g \in H(z_1)$, then $\dis([x],[z_1])=\norm{u}$.
        If $g \notin H(z_1)$, then  $\dis([x],[z_1]) >\norm{u} \le \dis([x],[z_1])$, which is a contradiction.
        \item Obviously $\dis([x],[z_1]) \le \min_{g \in H(z_1)} \norm{U_gu-v}$.
        On the other hand, for $g \in G \setminus K$ and $h \in G$, 
        \begin{align*}
         \norm{U_gx-y} &= \norm{U_gz_1-z_1+U_gu-v}\\ &\geq \norm{U_gz_1-z_1}-\norm{u}-\norm{v} \\ &\geq \rho_0(z_1) -2\norm{u}-2\norm{v}+\norm{U_hu-v} \\ & \geq \dis([x],[y]).
         \end{align*}
         
        \item \begin{itemize}
            \item $(a) \Rightarrow (b)$.
            If $\dis([x],[y]) =\norm{u-v}$, then
            $\norm{u-v} \le \norm{U_gx-y}= \norm{U_gz_1-z_1+U_gu-v}$, $\forall g \in G$.
            For $g \in H(z_1)$ this reduces to (b)
            \item $(b) \Rightarrow (a)$.
            Assume that $\forall g \in H(z_1)$, $\norm{u-v} \le \norm{U_gu-v}$
            Then $\norm{u-v} = \norm{x-y} \le \norm{U_gu-v}= \norm{U_gx-y}$
            For, $g\in G\setminus H(z_1)$
            $ \norm{U_gx-y} =  \norm{U_gz_1-z_1+U_gu-v} \geq \norm{U_gz_1-z_1}-\norm{u}-\norm{v} \geq \rho_0(z_1) -2\norm{u}-2\norm{v}+\norm{u-v} \geq \norm{u-v}=\norm{x-y} $
            Thus, $\dis([x],[y])=\norm{x-y}=\norm{u-v}$
            \item $(b) \Leftrightarrow (c)$ is immediate from definition of inner product
        \end{itemize}
    \end{enumerate}
\end{proof}
\begin{rem}    
Applying Lemma \ref{lemalign} to two sequences $(x_k)_k$ and $(y_k)_k$ that satisfy (\ref{eq17}-\ref{eq20}) in \Cref{lemassum}, it follows that $d(x_{k},z_1)=\norm{x_{k}-z_1}$ and $d(y_{k},z_1)=\norm{y_{k}-z_1}$ for $k$ large enough.
 Hence alignment must occur from some rank on.
\end{rem}

\begin{lem}\label{lem4.4}
For  fixed $i \in [p]$, $ j\in S_i$ and two sequences $(x_{n})_n$, $(y_{n})_n$ produced by  \Cref{lemassum}, we denote by $g_{1,n,i,j}$  the group elements that achieves $\Phi_{i,j}(x_{n})$ and by  $g_{2,n,i,j}$ the group element that achieves the $\Phi_{i,j}^j(y_{n})$. That is $\Phi_{i,j}(x_{n})= \ip{U_{g_{1,n,i,j}}w_i}{x_{n}}$ and $\Phi_{i,j}(y_{n})= \ip{U_{g_{2,n,i,j}}w_i}{y_{n}}$. 

	We can find a sequence of natural numbers $(n_r)_r$, such that,  $g_{1,n_r,i,j} = g_{1,i,j}$ and $g_{2,n_r,i,j} = g_{2,i,j}$ $\forall r \in \N,~ i \in [p],~ j \in S_i$.
 
\end{lem}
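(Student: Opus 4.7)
The plan is to prove this by a finite iterated pigeonhole (subsequence extraction) argument, relying only on two finiteness facts: the group $G$ has cardinality $N<\infty$, and the index set $S\subset[N]\times[p]$ has cardinality $m<\infty$. These together mean we have only $2m$ sequences of group elements to stabilize, each of which lives in a finite set, so we can successively refine the sequence to make every one of them eventually constant.

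First I would enumerate the index pairs in $S$ as $(i_1,j_1),(i_2,j_2),\ldots,(i_m,j_m)$. Starting from the sequence $(x_n,y_n)_n$ produced by \Cref{lemassum}, I look at $(g_{1,n,i_1,j_1})_n$; since this sequence takes values in the finite set $G$, at least one element $g_{1,i_1,j_1}\in G$ is attained for infinitely many indices $n$. Passing to the corresponding infinite subsequence (and relabeling) gives a subsequence along which $g_{1,n,i_1,j_1}=g_{1,i_1,j_1}$ for every $n$.

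I then repeat this step, in order, for $g_{2,n,i_1,j_1}$, then $g_{1,n,i_2,j_2}$, $g_{2,n,i_2,j_2}$, and so on up through $g_{1,n,i_m,j_m}$ and $g_{2,n,i_m,j_m}$. Each step is another application of the pigeonhole principle to a finite-valued sequence, and each step preserves all previously achieved stabilizations because we only pass to a further subsequence. After $2m$ extractions we obtain a single subsequence $(n_r)_r$ such that $g_{1,n_r,i,j}=g_{1,i,j}$ and $g_{2,n_r,i,j}=g_{2,i,j}$ for every $(i,j)\in S$ and every $r\in\mathbb{N}$.

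There is no real obstacle here: the argument is a clean instance of finite iterated pigeonhole, and the properties (\ref{eq17})--(\ref{eq22}) of $(x_n)_n$ and $(y_n)_n$ from \Cref{lemassum} are automatically inherited by any subsequence, so the stabilized subsequence still enjoys all the conclusions of that lemma. The only thing worth checking in the write-up is that $|S|$ and $|G|$ are both finite, which is already part of our standing hypotheses.
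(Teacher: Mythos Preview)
Your proposal is correct and is essentially identical to the paper's own proof: both perform a finite iterated pigeonhole extraction over the $2m$ finite-valued sequences $g_{1,n,i,j}$ and $g_{2,n,i,j}$ to obtain a single subsequence on which all of them are constant. The only cosmetic difference is that the paper first stabilizes all of the $g_{1,\cdot}$ and then all of the $g_{2,\cdot}$, whereas you interleave them, which is immaterial.
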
 
\begin{proof}
	For $i=1,~j=1$ there is a subsequence $(x_{n_m})_m$ such that $g_{1,1,1,n_m}= g_{1,1,1}$ for every $m \in \N$.
	Similarly, for $i=1,~j=2$ we can find a subsequence of $(x_{n_m})_m$, lets call it $(x_{n_{l}})_l$, such that $g_{1,1,2,n_{l}}= g_{1,1,2},~\forall l \in \N$. So by induction after $\sum_{i \in [p]} m_i =m$ steps we construct a subsequence of $(x_{n})_n$ lets call it $(x_{n_m})_m$ such that  $g_{1,i,j,n_m}= g_{1,i,j}$ for every $i\in [p],j \in S_i $.
	Starting from sequence $(y_{1,n_m})_m$ we repeat the same procedure concluding in a subsequence $(y_{1,n_r})_r$ such that $g_{2,i,j,n_r}= g_{2,i,j}$ for every $r\in \N, i\in [p],j \in S_i$ .
	Notice that sequences $(x_{n_r})_r$ and $(y_{n_r})_r$ that from now on  we will call them $(x_{n})_n$ and $(y_{n})_n$ for easiness of notation, satisfy the assumptions of lemma.
\end{proof}
For sequences $(x_{n})_n$,  $(y_{n})_n$ and $z_1$ defined before, let $u_{n} = x_{n}-z_1$ and $v_{n} = y_{n}-z_1$.  
Notice that
\begin{align*}\norm{\Phi_{\bw,S}(x_{n})-\Phi_{\bw,S}(y_{n})}^2=\sum_{i=1}^p \sum_{j \in S_i} |&\ip {U_{g_{1,i,j}}w_i}{x_{n}}-\ip{U_{g_{2,i,j}}w_i}{y_{n}}|^2\\ 
=\sum_{i=1}^p\sum_{j \in S_i} |&\ip{U_{g_{1,i,j}}w_i-U_{g_{2,i,j}}w_i}{z_1}\\+& \ip{w_i}{U_{g_{1,i,j}^{-1}}u_{n}-U_{g_{2,i,j}^{-1}}v_{n}}|^2.
\end{align*}
This sequence converge to $0$, as $k \to \infty$ while also $u_{n},v_{n} \to 0$.
So we conclude that for each $i \in [p]$ and $j \in S_i$, 
$\ip{U_{g_{1,i,j}}w_i-U_{g_{2,i,j}}w_i}{z_1}=0$.
So
\[\norm{\Phi_{\bw,S}(x_{n})-\Phi_{\bw,S}(y_{n})}^2= \sum_{i=1}^p \sum_{j \in S_i} |\ip{w_i}{U_{g_{1,i,j}^{-1}}u_{n}-U_{g_{2,i,j}^{-1}}v_{n}}|^2.\]
Thus we have
\begin{equation}\label{eq0}
\lim_{n \to \infty}\frac{ \sum_{i=1}^p \sum_{j \in S_i}|\ip{w_i}{U_{g_{1,i,j}^{-1}}u_{n}-U_{g_{2,i,j}^{-1}}v_{n}}|^2}{\norm{u_{n}-v_{n}}^2}=0
\end{equation}
where $\norm{u_{n}},\norm{v_{n}} \to 0$, so for large enough $n$ we have that $\norm{u_{n}}, \norm{v_{n}}\le \frac{1}{4}\rho_0(z_1)$. Recall that from \Cref{lemalign}, we conclude that exists $N_0 \in \N$, such that $\norm{u_{n}-v_{n}}\le \norm{U_gu_{n}-v_{n}}$ for all $g \in H(z_1)$ and $k\geq N_0$.

\begin{lem}\label{lemsub}
Fix  $p \in \N$, $\bw \in \Vc^p$ and $S \subset [N]\times [p]$.
Let $\Delta: \Vc \to \R$, where $\Delta(x)= \min_{(i,j) \in [p] \times [N]}\Delta^{i,j}(x)$, where the map $\Delta^{i,j}$ is defined in \eqref{DIJ}.
Fix nonzero vectors $z_1,\dots,z_k \in \Vc$, such that
\begin{align*}
\norm{z_1}=1,~\ip{z_i}{z_j}=0,~\forall i,j \in [k],~ i \not=j\\   
\intertext{and}
\norm{z_{l+1}} \le \min\left(\frac{1}{4}\Delta(\sum_{r=1}^l z_r),\frac{1}{4}\norm{z_l}\right),~\forall l \in [k-1].
\end{align*}
 Assume that the local lower Lipschitz constant of $\Phi_{\bw,S}$ vanishes at $z_1+z_2+\cdots+z_k$.
\begin{enumerate}
\item 

The local lower Lipschitz constant vanishes on the non-empty convex box 
 $\{\sum_{r=1}^k a_r z_r~,~|a_r-1|<\frac{1}{16k} \Delta(\sum_{l=1}^k z_l)\}$ centered at $z_1+z_2+\cdots+z_k$.

\item Assume $\hat{\Phi}_{\bw,S}$ is injective. 
If $k<d$ then there exists a nonzero vector $z_{k+1}$ such that:

(i) $\ip{z_{k+1}}{z_j}=0,~\forall j \in [k]$;

(ii) $\norm{z_{k+1}} \le \min\left(\frac{1}{4}\Delta(\sum_{r=1}^k z_r),\frac{1}{4}\norm{z_k}\right)$; and 

(iii) The local lower Lipschitz constant vanishes at $z_1+z_2+\cdots+z_{k+1}$, i.e. there are sequences of vectors $(x_n)_n,(y_n)_n$ such that 
\[\lim_{n \to \infty} x_n =\lim_{n \to \infty} y_n = \sum_{r=1}^{k+1} z_r\] and \[\lim_{n \to \infty}\frac{\norm{\Phi_{\bw,S}(x_n)-\Phi_{\bw,S}(y_n)}^2}{\dis([x_n],[y_n])^2}=0.\]

\end{enumerate}

 \end{lem}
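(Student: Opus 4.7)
\emph{Part 1.} The hypothesis yields witnessing sequences $(x_n),(y_n) \to \sum_{r=1}^k z_r$ with $\|\Phi_{\bw,S}(x_n)-\Phi_{\bw,S}(y_n)\|/\dis([x_n],[y_n]) \to 0$. Mimicking \Cref{lem4.4}, I would pass to subsequences on which the group elements $g_{1,i,j}, g_{2,i,j}$ selecting $\Phi_{i,j}(x_n)$ and $\Phi_{i,j}(y_n)$ remain constant in $n$. Writing $u_n = x_n-\sum_r z_r$ and $v_n = y_n-\sum_r z_r$, the $(i,j)$-component of $\Phi_{\bw,S}(x_n)-\Phi_{\bw,S}(y_n)$ equals
\[
\ip{(U_{g_{1,i,j}}-U_{g_{2,i,j}})w_i}{\textstyle\sum_r z_r} + \ip{U_{g_{1,i,j}}w_i}{u_n} - \ip{U_{g_{2,i,j}}w_i}{v_n},
\]
and vanishing of the ratio forces the leading constant to be zero for each $(i,j)$. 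To transport this vanishing to an arbitrary point $\sum_r a_r z_r$ in the box I would consider the shifted sequences $\tilde x_n = x_n+\sum_r(a_r-1)z_r$ and $\tilde y_n = y_n+\sum_r(a_r-1)z_r$; by \Cref{lemhelplow}(5) (with $e = u_n$ respectively $v_n$) the same $g_{1,i,j},g_{2,i,j}$ still select $\Phi_{i,j}$ at the shifted points. The difference $\Phi_{i,j}(\tilde x_n)-\Phi_{i,j}(\tilde y_n)$ then acquires an extra constant term $\ip{(U_{g_{1,i,j}}-U_{g_{2,i,j}})w_i}{\sum_r(a_r-1)z_r}$, which I must show vanishes for every admissible $a$. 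I expect this to follow from per-coordinate orthogonality $\ip{(U_{g_{1,i,j}}-U_{g_{2,i,j}})w_i}{z_r} = 0$ for each $r\in[k]$, established by replaying the vanishing-ratio argument on auxiliary witness pairs whose perturbations align with individual $z_r$ directions and exploiting the constancy of $L^{i,j}$ across the box (\Cref{lemhelplow}(2)).

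\emph{Part 2.} I would take aligned witnessing sequences $(x_n),(y_n)\to\sum_{r=1}^k z_r$ (with $\dis([x_n],[y_n])=\|x_n-y_n\|$ via \Cref{lemalign}), decompose $x_n = \sum_r c_{r,n}z_r + u_n$ and $y_n = \sum_r d_{r,n}z_r + v_n$ with $u_n,v_n$ orthogonal to $E_k:=\Span(z_1,\ldots,z_k)$ and both tending to zero, then rescale by $s_n = M/\sqrt{\|u_n\|^2+\|v_n\|^2}$, where $M = \tfrac{1}{8}\min(\Delta(\sum_r z_r),\|z_k\|)$ is chosen to enforce the norm conditions required for the next step of the recursion. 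Extracting convergent subsequences gives $s_n u_n \to U$ and $s_n v_n \to V$ in $E_k^\perp$, and the candidate is $z_{k+1}:=U$; properties (i) (orthogonality to $z_1,\ldots,z_k$) and (ii) (norm bound) then follow by construction, while (iii) is verified by exhibiting appropriate translates of the rescaled sequences as witnesses of vanishing local lower Lipschitz constant at $\sum_{r=1}^{k+1} z_r$.

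The main obstacle will be showing $U = V \neq 0$, and this is where injectivity of $\hat{\Phi}_{\bw,S}$ is essential. Substituting the per-coordinate orthogonality from Part 1 into $\Phi_{i,j}(x_n)-\Phi_{i,j}(y_n)$ collapses it to $\sum_r(c_{r,n}-d_{r,n})\ip{U_{g_{1,i,j}}w_i}{z_r} + \ip{U_{g_{1,i,j}}w_i}{u_n} - \ip{U_{g_{2,i,j}}w_i}{v_n}$. If $U\neq V$, then small translates $\sum_r z_r + \alpha U$ and $\sum_r z_r + \alpha V$ would be non-equivalent points with identical $\Phi_{\bw,S}$-images, contradicting injectivity of $\hat{\Phi}_{\bw,S}$; if $U=V=0$, the perpendicular components vanish too rapidly relative to $s_n^{-1}$, and combined with the hypothesis $k<d$ (so $E_k^\perp\neq\{0\}$ is nontrivial) and the alignment of $(x_n,y_n)$, this is expected to force $[x_n]=[y_n]$ for large $n$, contradicting the vanishing-ratio hypothesis. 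This dichotomy yields $U=V\neq 0$ and supplies the required $z_{k+1}$.
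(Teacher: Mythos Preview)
Your approach is correct but you are working harder than necessary. The ``extra constant term'' $\ip{(U_{g_{1,i,j}}-U_{g_{2,i,j}})w_i}{\sum_r(a_r-1)z_r}$ vanishes immediately from \Cref{lemhelplow}(1): since $g_{1,i,j},g_{2,i,j}\in L^{i,j}(x_n)\cap L^{i,j}(y_n)\subset L^{i,j}(\sum_r z_r)$, that lemma gives $\ip{U_{g_{1,i,j}}w_i}{\sum_r a_rz_r}=\ip{U_{g_{2,i,j}}w_i}{\sum_r a_rz_r}$ for every $a$ in the box, with no need for ``auxiliary witness pairs''. The paper simply notes that $L^{i,j}(\tilde x_n)=L^{i,j}(x_n)$ and $L^{i,j}(\tilde y_n)=L^{i,j}(y_n)$ (via \Cref{lemhelplow}(5)), so the two ratios are literally the same expression in $u_n,v_n$.

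\textbf{Part 2.} There is a genuine gap. Your scaling $s_n=M/\sqrt{\|u_n\|^2+\|v_n\|^2}$ controls only the components orthogonal to $E_k=\Span(z_1,\dots,z_k)$, but the tangential differences $c_{r,n}-d_{r,n}$ remain uncontrolled: $s_n(c_{r,n}-d_{r,n})$ may be unbounded. Your collapsed expression $\sum_r(c_{r,n}-d_{r,n})\ip{U_{g_{1,i,j}}w_i}{z_r}+\ip{U_{g_{1,i,j}}w_i}{u_n}-\ip{U_{g_{2,i,j}}w_i}{v_n}$ therefore does not, after multiplication by $s_n$, converge to $\ip{U_{g_{1,i,j}}w_i}{U}-\ip{U_{g_{2,i,j}}w_i}{V}$; the first sum may dominate. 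Consequently you cannot conclude that $\sum_r z_r+\alpha U$ and $\sum_r z_r+\alpha V$ have identical $\Phi_{\bw,S}$-images, and your contradiction with injectivity does not go through. Equally, for (iii) your rescaled witnesses $\sum_r z_r+s_nu_n$ and $\sum_r z_r+s_nv_n$ differ by $s_n(u_n-v_n)$, which is \emph{not} proportional to $x_n-y_n$, so the vanishing of the original ratio says nothing about the new one.

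The paper repairs this by folding the tangential mismatch into the perturbation: it sets $e_n=\sum_r(d_{r,n}-c_{r,n})z_r+b_n$ (with $b_n$ your $v_n$) and scales by $s_n=M/\|e_n\|$. Then $\tilde x_n-\tilde y_n=s_n(a_n-e_n)=s_n(x_n-y_n)$ exactly, so the ratio is preserved verbatim; moreover $\|s_ne_n\|=M$ forces $\delta:=\lim s_ne_n\neq 0$, and injectivity plus alignment give $\alpha:=\lim s_na_n=\delta$, which lies in $E_k^\perp$ and has the right norm. You also skip the preliminary step (the paper's Claim~1) showing that the orthogonal components cannot both vanish from some index on; this is itself an application of injectivity and is needed for $s_n$ to be well-defined.
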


\begin{proof}
\begin{enumerate}
    \item 
Let $(x_n)_n, (y_n)_n$ be sequences in $\Vc$ such that 
\[\lim_{n\to \infty} x_n =\lim_{n\to \infty}y_n= \sum_{r=1}^k z_r\] and 
\[\lim_{n \to \infty} \frac{\norm{\Phi_{\bw,S}(x_n)-\Phi_{\bw,S}(y_n)}^2}{\dis([x_n],[y_n])^2}=0.\]

Claim: For any $a_1,\dots,a_k \in \left(1-\frac{1}{16k} \Delta(\sum_{r=1}^kz_r),1+\frac{1}{16k} \Delta(\sum_{r=1}^k z_r)\right)$ the sequences 
\[\tilde{x}_n=x_n+\sum_{r=1}^k (a_r-1)z_r\] and 
\[\tilde{y}_n=y_n+\sum_{r=1}^k (a_r-1)z_r\] also achieve a zero lower Lipschitz constant, i.e.
\[\lim_{n \to \infty}\frac{\norm{\Phi_{\bw,S}(\tilde{x}_n)-\Phi_{\bw,S}(\tilde{y}_n)}^2}{\dis([\tilde{x}_n],[\tilde{y}_n])^2}=0.\]

    First we denote by $u_n$ and $v_n$ the difference sequences $x_n$ and $y_n$ to their common limit $\sum_{r=1}^kz_r$, 
    \[u_n=x_n-\sum_{r=1}^kz_r=\tilde{x}_n-\sum_{r=1}^ka_rz_r\] and
    \[v_n=y_n-\sum_{r=1}^kz_r=\tilde{y}_n-\sum_{r=1}^ka_rz_r.\]
   Sequences $(u_n)_n$ and $(v_n)_n$ converge to zero. Therefore there exists $M_0 \in \N$ such that $\forall n \geq M_0$ 
    \begin{enumerate}
        \item $\norm{u_n}=\norm{x_n-\sum_{r=1}^kz_r} < \frac{1}{16}\Delta(\sum_{r=1}^kz_r)$
        \item  $\norm{u_n}=\norm{\tilde{x}_n-\sum_{r=1}^ka_rz_r} < \frac{1}{16}\Delta(\sum_{r=1}^ka_rz_r)$
        \item  $\norm{v_n}=\norm{y_n-\sum_{r=1}^kz_r} < \frac{1}{16}\Delta(\sum_{r=1}^kz_r)$
         \item  $\norm{v_n}=\norm{\tilde{y}_n-\sum_{r=1}^ka_rz_r} < \frac{1}{16}\Delta(\sum_{r=1}^ka_rz_r)$.
    \end{enumerate}
    
    Thus from part (3) of \Cref{lemhelplow}, \Cref{lemnest} and part (2) of \Cref{lemhelplow} we have that for any $n\geq M_0$ and $(i,j)\in S$ \[L^{i,j}(\tilde{x}_n)= L^{i,j}(x_n)\subset L^{i,j}(\sum_{r=1}^kz_r) =L^{i,j}(\sum_{r=1}^ka_rz_r)\]
    and
    \[L^{i,j}(\tilde{y}_n) =L^{i,j}(y_n)\subset L^{i,j}(\sum_{r=1}^kz_r) =L^{i,j}(\sum_{r=1}^ka_rz_r) .\] 
    Therefore,
    \begin{align*}
        0= &\lim_{n \to \infty}\frac{\norm{\Phi_{\bw,S}(x_n)-\Phi_{\bw,S}(y_n)}^2}{\dis([x_n],[y_n])^2}= \\
        =&\lim_{n \to \infty}\frac{ \sum_{i=1}^p \sum_{j \in S_i}|\ip{w_i}{U_{g_{1,i,j}^{-1}}u_n-U_{g_{2,i,j}^{-1}}v_n|^2}}{\norm{u_n-v_n^2}}= \\= & \lim_{n \to \infty}\frac{\norm{\Phi_{\bw,S}(\tilde{x}_n)-\Phi_{\bw,S}(\tilde{y}_n)}^2}{\dis([\tilde{x}_n],[\tilde{y}_n])^2},
    \end{align*}
where \[g_{1,i,j} \in L^{i,j}(x_n)=L^{i,j}(\tilde{x}_n)\] and   
\[g_{2,i,j} \in L^{i,j}(y_n)=L^{i,j}(\tilde{y}_n).\]
This proves the lower Lipschitz constant of $\Phi_{\bw,S}$ vanishes at $\sum_{r=1}^ka_rz_r$.

\item 
Let two sequences $(x_n)_n,(y_n)_n$ that both converge to $\sum_{r=1}^k  z_r$, and achieve lower Lipschitz bound zero for map $\Phi_{\bw,S}$.
We align sequences $(x_n)_n$ and  $(y_n)_n$ to satisfy the properties of \Cref{lemassum}.
 We denote by $a_n= P_{E_k}x_n$ and $b_n= P_{E_k}y_n$ the orthogonal projections of the sequences $(x_n)_n$ and $(y_n)_n$ respectively, on the linear subspace $E_k=\Span\{z_1,\dots,z_k\}^\bot$. 

Claim 1: First we will show that $\exists M_0$ such that $\forall n\geq M_0$, $a_n\neq 0$ or $b_n\neq 0$. Assuming otherwise, there are two sequences of vectors $x_n=\sum_{r=1}^k c_{r,n}z_r$ and $y_n=\sum_{r=1}^k d_{r,n}z_r$, where $\lim_{n \to \infty}c_{r,n}=\lim_{n \to \infty}d_{r,n}=1,~\forall r \in [k]$ that achieve lower Lipschitz bound zero. 
Recall that from part (2) of \Cref{lemhelplow} we have that $\exists M_0 \in \N$ such that $\forall n \geq M_0$ and $(i,j) \in S$
\[L^{i,j}(\sum_{r=1}^k c_{r,n}z_r)=L^{i,j}(\sum_{r=1}^k d_{r,n}z_r)=L^{i,j}(\sum_{r=1}^k z_r).\]
Then, for $g_{i,j} \in L^{i,j}(\sum_{r=1}^k z_r) $,

\begin{align*}
0&=\lim_{n \to \infty} \frac{\norm{\Phi_{\bw,S}(x_n) -\Phi_{\bw,S}(y_n)}^2}{\dis([x_n],[y_n])^2}\\
& =\lim_{n \to \infty} \frac{ \sum_{i=1}^p \sum_{j \in S_i}|\ip{U_{g_{i,j}} w_i}{x_n-y_n}|^2}{\dis([x_n],[y_n])^2}\\ 
&\geq \lim_{n \to \infty}\frac{ \sum_{i=1}^p \sum_{j \in S_i}|\ip{U_{g_{i,j}} w_i}{\sum_{r=1}^k(c_{r,n}-d_{r,n})z_r}|^2}{\norm{\sum_{r=1}^k(c_{r,n}-d_{r,n})z_r}^2}\\
&= \sum_{i=1}^p \sum_{j \in S_i}|\ip{U_{g_{i,j}} w_i}{\tilde{z}}|^2,
\end{align*}

where 
\[\tilde{z} =  \lim_{m \to \infty} \frac{\sum_{r=1}^k(c_{r,n_m}-d_{r,{n_m}})z_r}{\norm{\sum_{r=1}^k(c_{r,n_m}-d_{r,{n_m}})z_r}}\]
is a unit vector obtained as the limit of a convergent subsequence of the sequence of unit vectors
$\frac{\sum_{r=1}^k(c_{r,n}-d_{r,{n}})z_r}{\norm{\sum_{r=1}^k(c_{r,n}-d_{r,n})z_r}}$.
 Since the group $G$ is finite, we can find a positive number $\epsilon >0$ such that $\epsilon \norm{\tilde{z}} <\frac{1}{4}\Delta(\sum_{r=1}^kz_r)$ and 
$\sum_{r=1}^kz_r \nsim \sum_{r=1}^kz_r+\epsilon \tilde{z}$. In this case  
\[\Phi_{\bw,S}(\sum_{r=1}^kz_r)=\Phi_{\bw,S}(\sum_{r=1}^kz_r+\epsilon \tilde{z})\]
which contradict the injectivity property. This establishes Claim 1.
\vspace{5mm}

Now we can assume for all $n\geq M_0$, $a_n=P_{E_k}x_n\neq 0$ or $b_n=P_{E_k}y_n\neq 0$.
If need be, pass to a subsequence and/or switch the definitions of $x_n$ and $y_n$, so that  $\norm{b_n}\geq\norm{a_n}$ for all $n$. In doing so we no longer claim the normalization (\ref{eq18}). Nevertheless, both $\norm{x_n},\norm{y_n}\leq 1$.

Let $c_{r,n}, d_{r,n}$ be the unique coefficients determined by $x_n=\sum_{r=1}^k c_{r,n}z_r+a_n$, $y_n=\sum_{r=1}^k d_{r,n}z_r + b_n$. Note $\lim_{n\rightarrow\infty}c_{r,n}=\lim_{n\rightarrow\infty}d_{r,n}=1$. 

 Let $e_n= \sum_{r=1}^k (d_{r,n}-c_{r,n})z_r+b_n$ and \[s_n= \frac{\min\left(\norm{z_k}, \Delta(\sum_{r=1}^kz_r),\rho_0(\sum_{r=1}^kz_r)\right)}{16\norm{e_n}}.\] 
Note $\norm{e_n}\geq \norm{b_n}\geq \norm{a_n}$ for all $n$.
 
 Claim 2:  Sequences $\tilde{x}_n= \sum_{r=1}^k z_r +s_na_n $ and $\tilde{y}_n=\sum_{r=1}^k z_r+s_ne_n$ achieve also the lower Lipschitz constant zero at $\sum_{r=1}^k z_k$.

 Note that $\max(\norm{s_na_n},\norm{s_ne_n})\le \frac{1}{16}$. 
Pass to subsequences of $(a_n)_n$ and $(e_n)_n$  so that both $\lim_{n \to \infty}s_na_n$ and $ \lim_{n \to \infty} s_ne_n$ converge.
Let $\alpha=\lim_{n \to \infty}s_na_n$ and $\delta=\lim_{n \to \infty} s_ne_n$. Notice $\delta \neq 0$.

 The limits
\[\lim_{n \to \infty}c_{r,n}=\lim_{n \to \infty}d_{r,n}=1,~\forall r \in [k]~\text{and } \lim_{n \to \infty} a_n= \lim_{n \to \infty} e_n=0 \]
imply that $\exists m_0 \in \N$ such that $\forall n \geq m_0$, and $\forall r \in [k]$
\begin{enumerate}
   \item  $|1-c_{r,n}|<\frac{1}{16k} \Delta(\sum_{r=1}^k z_r)$
   \item  $|1-d_{r,n}|<\frac{1}{16k} \Delta(\sum_{r=1}^k z_r)$
    \item  $|c_{r,n}-d_{r,n}|<\frac{1}{16k} \Delta(\sum_{r=1}^k z_r)$
    \item $\norm{a_n} <\frac{1}{16k} \Delta(\sum_{r=1}^k z_r)$
    \item $\norm{e_n} <\frac{1}{16k} \Delta(\sum_{r=1}^k z_r)$
\end{enumerate}
From \Cref{lemhelplow} part (1), \[\Delta(\sum_{r=1}^k c_{r,n} z_r) \geq \frac{1}{4} \Delta(\sum_{r=1}^k  z_r).\]
Also \[\max(\norm{a_n}, \norm{s_na_n})< \frac{1}{16} \Delta(\sum_{r=1}^k z_r) \le \frac{1}{4} \Delta(\sum_{r=1}^k c_{r,n} z_r) \] 
and
\[\max(\norm{e_n}, \norm{s_ne_n})< \frac{1}{16} \Delta(\sum_{r=1}^k z_r) \le \frac{1}{4} \Delta(\sum_{r=1}^k d_{r,n} z_r) \] 
So, for any $(i,j)\in S$
\begin{align*}
\hspace{-10mm} L^{i,j}(x_n)&=L^{i,j}(\sum_{r=1}^k c_{r,n}z_r+ a_{n})=L^{i,j}(\sum_{r=1}^k z_r+a_{n})\\&=L^{i,j}(\sum_{r=1}^k z_r+s_{n} a_{n})=
L^{i,j}(\tilde{x}_n)\subset L^{i,j}(\sum_{r=1}^k z_r)  =L^{i,j}(\sum_{r=1}^k c_{r,n}z_r).
\end{align*}

Where the second equality comes from \Cref{lemhelplow} part 3, third equality from \Cref{lemnest2} the fifth inclusion from \Cref{lemnest}, and the last equality from  \Cref{lemhelplow} part 2.

Similarly,
\begin{align*}
\hspace{-10mm} L^{i,j}(y_n)&=L^{i,j}(\sum_{r=1}^k d_{r,n}z_r+ b_{n})=L^{i,j}(\sum_{r=1}^k (1+d_{r,n}-c_{r,n}) z_r+ b_{n})\\&=L^{i,j}(\sum_{r=1}^k z_r+ e_{n}) =
L^{i,j}(\sum_{r=1}^k z_r+ s_ne_{n})=L^{i,j}(\tilde{y}_n)\\ &\subset L^{i,j}(\sum_{r=1}^k z_r)  =L^{i,j}(\sum_{r=1}^k c_{r,n}z_r).
\end{align*}

Therefore,
\begin{align*}
        0= &\lim_{n \to \infty}\frac{\norm{\Phi_{\bw,S}(x_n)-\Phi_{\bw,S}(y_n)}^2}{\dis([x_n],[y_n])^2}= \\
        =&\lim_{n \to \infty}\frac{ \sum_{i=1}^p \sum_{j \in S_i}\ip{w_i}{U_{g_{1,i,j}^{-1}}a_n -U_{g_{2,i,j}^{-1}} e_n}^2}{\norm{a_n-e_n}^2}\\
        =& \lim_{n \to \infty}\frac{ \sum_{i=1}^p \sum_{j \in S_i}\ip{w_i}{U_{g_{1,i,j}^{-1}}s_na_n -U_{g_{2,i,j}^{-1}} s_ne_n^2}}{\norm{s_na_n-s_ne_n}^2} \\
        = & \lim_{n \to \infty}\frac{\norm{\Phi_{\bw,S}(\tilde{x}_n)-\Phi_{\bw,S}(\tilde{y}_n)}^2}{\dis([\tilde{x}_n],[\tilde{y}_n])^2}.
    \end{align*}

where
\[{g_{1,i,j}} \in L^{i,j}(x_n) \text{ and } {g_{2,i,j}} \in L^{i,j}(y_n)\]
are chosen independent of $n$ by possibly passing to subsequences since $G$ is finite. 
So,
\[\Phi_{\bw,S}(\sum_{r=1}^k z_r+\alpha)-\Phi_{\bw,S}(\sum_{r=1}^k z_r+\delta)=0.\] 
Since $\hat{\Phi}_{\bw,S}$ is injective,
\[\sum_{r=1}^k z_r+\alpha \sim \sum_{r=1}^k z_r+\delta\]
Let $g_1\in G$ denote a group element that achieves this equivalence, i.e.
\[\sum_{r=1}^k z_r+\alpha = U_{g_1}(\sum_{r=1}^k z_r+\delta)\]
Note that $g_1 \in H(\sum_{r=1}^k z_r)$ because otherwise
 \begin{align*}
	0= \norm{\sum_{r=1}^k z_r+\alpha-U_{g_1}(\sum_{r=1}^k z_r)+U_{g_1}\delta)} = \norm{\sum_{r=1}^k z_r+\alpha-U_{g_1}(\sum_{r=1}^k z_r)+U_{g_1}\delta)} \\
	 \geq \norm{\sum_{r=1}^k z_r-U_{g_1}(\sum_{r=1}^k z_r)} - \norm{\alpha-U_{g_1}\delta} \geq 
  \rho_0(\sum_{r=1}^k z_r) - \norm{\alpha}-\norm{\delta} 
	 >0
	\end{align*}
 The last inequality comes from the fact that $\norm{\alpha}< \frac{1}{4}\rho_0(\sum_{r=1}^k z_r)$, and $\norm{\delta}< \frac{1}{4}\rho_0(\sum_{r=1}^k z_r)$. 

Additionally, $\alpha=U_{g_1}\delta$ because 
 \begin{align*}
	0= \norm{\sum_{r=1}^k z_r+a-U_{g_1}(\sum_{r=1}^k z_r)+U_{g_1}\delta)} 
	 =   \norm{a-U_{g_1}\delta}.
	\end{align*}

Claim 3: The two vectors $\alpha$ and $\delta$ are equal, $\alpha=\delta$.

We prove this claim by contradiction. 
Assume that $\alpha \neq \delta$.
From \Cref{lemalign}, $\exists M_0 \in   \N$ such that $\forall n\geq M_0$ 
\[\norm{s_na_n-s_ne_n} \le \norm{s_na_n-s_nU_{g_1}e_n}.\]
Therefore,
\[0<\norm{a-\delta} =\lim_{n \to \infty}\norm{s_na_n-s_ne_n} \le \lim_{n \to \infty}\norm{s_na_n-s_nU_{g_1}e_n} = 0.\]
We conclude that $\alpha=\delta\neq 0$.
\vspace{4mm}

Set $z_{k+1}=\alpha=\delta$. Together with sequences 
$\tilde{x}_n$ and $\tilde{y}_n$, they satisfy the assertions of part 2 of this Lemma.

\end{enumerate}
\end{proof}
\begin{rem}
Our construction produces $z_{k+1}$ that has norm equal to 
    
    $\frac{1}{16}\min\left(\norm{z_k}, \Delta(\sum_{r=1}^kz_r),\rho_0(\sum_{r=1}^kz_r)\right)$.
\end{rem}

\vspace{5mm}

Now we can complete the proof of \Cref{theormainstab}.

\begin{figure}
    \centering
\begin{tikzpicture}[tdplot_main_coords, scale = 0.8]

\coordinate (P) at ({5},{0},{5});
\coordinate (Q) at ({5+0.7},{0+0.5*sqrt(2)}, {5-0.7});


\shade[ball color = lightgray,
	opacity = 0.5
] (P) circle (2cm);
\shade[ball color = yellow,
	opacity = 0.5
] (Q) circle (0.75cm);


\draw[-stealth] (0,0,0) -- (4,0,0);
\draw[-stealth] (0,0,0) -- (0,4,0);
\draw[-stealth] (0,0,0) -- (0,0,4);
\draw[dashed, gray] (0,0,0) -- (-1,0,0);
\draw[dashed, gray] (0,0,0) -- (0,-1,0);

\draw[thick, -stealth] (0,0,0) -- (P) node[label=above:{$z_1$}]{};
\draw[thick, -stealth] (P) -- (Q) node[label=right:{$z_2$}]{};


\draw[fill = lightgray!50] (P) circle (0.5pt);

\end{tikzpicture}
\begin{tikzpicture}[tdplot_main_coords, scale = 0.8]

\coordinate (P) at ({5},{0},{5});
\coordinate (Q) at ({5+0.7},{0+0.5*sqrt(2)}, {5-0.7});

\coordinate (R) at ({5+0.7-0.088},{0+0.5*sqrt(2)+0.25}, {5-0.7+0.088});

\shade[ball color = lightgray,
	opacity = 0.5
] (P) circle (2cm);
\shade[ball color = yellow,
	opacity = 0.5
] (Q) circle (0.75cm);

\shade[ball color = red,
	opacity = 0.5
] (R) circle (0.125cm);

\draw[-stealth] (0,0,0) -- (4,0,0);
\draw[-stealth] (0,0,0) -- (0,4,0);
\draw[-stealth] (0,0,0) -- (0,0,4);
\draw[dashed, gray] (0,0,0) -- (-1,0,0);
\draw[dashed, gray] (0,0,0) -- (0,-1,0);

\draw[thick, -stealth] (0,0,0) -- (P) node[label=above:{$z_1$}]{};
\draw[thick, -stealth] (P) -- (Q) node[label=left:{$z_2$}]{};

\draw[thick, -stealth] (Q) -- (R) node[label=below:{$z_3$}]{};

\draw[fill = lightgray!50] (P) circle (0.5pt);

\end{tikzpicture}
    \caption{The robotic arm method.}
   \label{robarm}
\end{figure}

\begin{proof}
Starting from vector $z_1$ and the sequences $(x_n)_n$, $(y_n)_n$ observed in \Cref{lemupper} after $d-1$ steps of algorithmic construction of part (2) of \Cref{lemsub} we get $d$ non-zero vectors $\{z_1,\dots,z_d\}$  and a pair of sequences 
$(\tilde{x}_n)_n$, $(\tilde{y}_n)_n$
such that 

(i) $\ip{z_{i}}{z_j}=0,~\forall i,j \in [d], i \neq j$;

(ii) $\norm{z_{k+1}} \le \min(\frac{1}{4}\Delta(\sum_{r=1}^k z_r),\frac{1}{4}\norm{z_k}),~\forall k \in [d-1];$
and

(iii)
$\lim_{n \to \infty} \tilde{x}_n =\lim_{n \to \infty} \tilde{y}_n = \sum_{r=1}^{d} z_r$ and \[\lim_{n \to \infty}\frac{\norm{\Phi_{\bw,S}(\tilde{x}_n)-\Phi_{\bw,S}(\tilde{y}_n)}^2}{\dis([\tilde{x}_n],[\tilde{y}_n])^2}=0.\]

 Let $\tilde{x}_n=\sum_{r=1}^d l_{r,n}z_r$ and $\tilde{y}_n=\sum_{r=1}^d t_{r,n}z_r$.
 Notice that $\lim_{n \to \infty}l_{r,n}=\lim_{n \to \infty}t_{r,n}=1,~\forall r \in [d]$.
 
Recall that from part (2) of \Cref{lemhelplow} we have that $\exists M_0 \in \N$ such that $\forall n \geq M_0$ and $(i,j) \in S$
\[L^{i,j}(\sum_{r=1}^d l_{r,n}z_r)=L^{i,j}(\sum_{r=1}^d t_{r,n}z_r)=L^{i,j}(\sum_{r=1}^d z_r).\]
Then, for $g_{i,j} \in L^{i,j}(\sum_{r=1}^d z_r) $,

\begin{align*}
0&=\lim_{n \to \infty} \frac{\norm{\Phi_{\bw,S}(\tilde{x}_n) -\Phi_{\bw,S}(\tilde{y}_n)}^2}{\dis([x_n],[y_n])^2}\\
& =\lim_{n \to \infty} \frac{ \sum_{i=1}^p \sum_{j \in S_i}|\ip{U_{g_{i,j}} w_i}{\tilde{x}_n-\tilde{y}_n}|^2}{\dis([x_n],[y_n])^2}\\ 
&\geq \lim_{n \to \infty}\frac{ \sum_{i=1}^p \sum_{j \in S_i}|\ip{U_{g_{i,j}} w_i}{\sum_{r=1}^k(l_{r,n}-t_{r,n})z_r}|^2}{\norm{\sum_{r=1}^d(l_{r,n}-t_{r,n})z_r}^2}\\
&= \sum_{i=1}^p \sum_{j \in S_i}|\ip{U_{g_{i,j}} w_i}{\tilde{z}}|^2,
\end{align*}

where 
\[\tilde{z} =  \lim_{m \to \infty} \frac{\sum_{r=1}^d (l_{r,n_m}-d_{r,{n_m}})z_r}{\norm{\sum_{r=1}^d(l_{r,n_m}-t_{r,{n_m}})z_r}}\]
is a unit vector obtained as the limit of a convergent subsequence of the sequence of unit vectors
$\frac{\sum_{r=1}^d(l_{r,n}-t_{r,{n}})z_r}{\norm{\sum_{r=1}^d(l_{r,n}-t_{r,n})z_r}}$.
 Since the group $G$ is finite, we can find a positive number $\epsilon >0$ such that $\epsilon \norm{\tilde{z}} <\frac{1}{4}\Delta(\sum_{r=1}^dz_r)$ and 
$\sum_{r=1}^dz_r \nsim \sum_{r=1}^dz_r+\epsilon \tilde{z}$. In this case  
\[\Phi_{\bw,S}(\sum_{r=1}^dz_r)=\Phi_{\bw,S}(\sum_{r=1}^dz_r+\epsilon \tilde{z})\]
which contradict the injectivity property.
\Cref{theormainstab} is now proved.

\end{proof}

\section{Dimension reduction using linear maps}

In previous section we considered an embedding $\hat{\Phi}_{\bw,S}:\hat{\Vc}\rightarrow\R^m$ that is injective on the quotient space $\hat{\Vc}$. The dimension $m$ of the space $\R^m$ may be very large. In this section we show that the nonlinear map  $\hat{\Phi}_{\bw,S}$ can be further linearly processed into a smaller dimensional space while preserving injectivity and bi-Lipschitz properties. 

The idea of using dimension reduction linear maps goes back many years. The famous Johnson-Lindenstrauss Lemma \cite{johnson1986extensions} provides a nearly isometric projection for finite metric spaces. More recently, \cite{Cahill19} shows that a dimension reduction linear map preserves Lipschitz properties for translation invariant polynomial embeddings.

The first result that combines sorted co-orbits with linear maps was shown in \cite{balan2022permutation} in the case of the group of permutations $G=S_n$ acting by left multiplication on $n\times d$ real matrices, $\Vc=\R^{n\times d}$. In that particular case, the dimension of the intermediary co-orbit space was $n(1+(d-1)n!)$ while the final target space was shown to be $2nd=2\,dim(\Vc)$. The authors of \cite{dym2022low} show that intermediate embedding can be realized in dimension $(2nd+1)n$ instead of $n(1+(d-1)n!)$ with the final target space of dimension $2nd+1$. 
A more careful analysis of such an embedding by Matthias Wellershoff \cite{Matthias2023a} 
proved that $2nd+1$ can be replaced by $2nd-d$. In the Ph.D. thesis \cite{tsoukanis2024g}, one of the authors showed that the decrease of dimension from $2nd$ to $2nd-d$ 
represents the dimension of a certain joint eigenspace of this representation. 

The exact statement of our dimension reduction result 
is included in \Cref{dimred}.

To prove \Cref{dimred} we need first to construct a few objects.
Let $D : \Vc \times \Vc
\to \R^{m}$ be the nonlinear map $D(x,y)= \Phi_{\bw,S}(x)-\Phi_{\bw,S}(y)$.
Its range $E$ is defined by $E=\Ran(D) :=\{\Phi_{\bw,S}(x)-\Phi_{\bw,S}(y): x,y  \in \Vc
\}= \Ran(\Phi_{\bw,S})- \Ran(\Phi_{\bw,S})$.

Fix $g_1,\dots,g_N$ an enumeration of the group elements. Define $\lambda_{i,j}(x): \Vc \to \R$ by $\lambda_{i,j}(x) = \ip{U_{g_i}w_j}{x}$.
Notice that $\lambda_{i,j}$ is a linear map (unlike $\Phi_{i,j}$) and also that 
\begin{align*}
	\Phi_{\bw,S}(x)-\Phi_{\bw,S}(y) &=  [\lambda_{1,\nu_1(1)}(x)- \lambda_{1,\nu_{p+1}(1)}(y), \dots, \lambda_{1,\nu_1(m_1)}(x)- \lambda_{1,\nu_{p+1}(m_1)}(y),\\ &\dots,\lambda_{p,\nu_p(1)}(x)- \lambda_{p,\nu_{2p}(1)}(y), \dots, \lambda_{p,\nu_p(m_p)}(x)- \lambda_{p,\nu_{2p}(m_p)}(y) ]  
\end{align*}
for some permutations $\nu_1,...,\nu_{2p} \in S_N$ that may {\em depend} on $x$ and $y$. Let $m_j=|S_j|=|\{i\in[N]~,~(i,j)\in S\}|$ so that $m_1+\cdots+m_p=m$.  

Now, fix permutations $\pi_1,\ldots,\pi_{2p}\in S_N$ and let $L_{\pi_1, \dots,\pi_{2p}}:\Vc \times \Vc \to \R^{m} $ denote the {\em linear} map
\begin{align*}
	L_{\pi_1, \dots,\pi_{2p}}(x,y)=& [\lambda_{1,\pi_1(1)}(x)- \lambda_{1,\pi_{p+1}(1)}(y), \dots, \lambda_{1,\pi_1(m_1)}(x)- \lambda_{1,\pi_{p+1}(m_1)}(y),\\ &\dots,\lambda_{p,\pi_p(1)}(x)- \lambda_{p,\pi_{2p}(1)}(y), \dots, \lambda_{p,\pi_p(m_p)}(x)- \lambda_{p,\pi_{2p}(m_p)}(y) ]  
\end{align*}Define 
\[F = \cup_{\pi_1, \dots, \pi_{2p} \in S_{N}} \Ran(L_{\pi_1, \dots \pi_{2p}}).\]

Notice that $F$ is a finite union of linear subspaces and that $E \subset F$. For  fixed $\pi_1,\dots \pi_{2p}$ the map $(x,y)\mapsto L_{\pi_1,\dots , \pi_{2p}}(x,y)$ is linear in  $(x,y)$, $L_{\pi_1,\dots , \pi_{2p}}(v,v) = 0$ for all $v\in V_G$, and from the rank-nullity theorem we have \[\dim(\Ran(L_{\pi_1, \dots \pi_{2p}})) \le 2d-d_G.\]
\begin{lem}\label{lem:generic}
	Assume $r,s,m$ are non-negative integers so that $r+s \le m$. For any finite collection $\{F_a\,:\,a \in [T] \}$ of $T$ linear subspaces of $\R^m$ of dimension at most $s$, a generic $r$-dimensional linear subspace $K$ of $\R^m$ , satisfies $K \cap F_a =\{0\},\ \forall a \in [T]$. Here generic means open and dense with respect to Zarisky topology.
\end{lem}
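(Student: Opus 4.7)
\textbf{Proof plan for \Cref{lem:generic}.} The plan is to parametrize $r$-dimensional subspaces of $\R^m$ by $m\times r$ matrices and reformulate the transversality condition $K\cap F_a=\{0\}$ as the non-vanishing of a polynomial in the entries of such a matrix. Concretely, I would identify an $r$-dimensional subspace $K$ with an $m\times r$ matrix $M$ of full column rank via $K=\mathrm{col}(M)$. The locus of rank-$r$ matrices is already a non-empty Zariski open subset of the irreducible affine space $\R^{m\times r}$.

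Next, for each $a\in[T]$ fix a basis of $F_a$ and collect it into an $m\times d_a$ matrix $B_a$ with $d_a=\dim(F_a)\le s$. Since $\dim(K+F_a)=\dim(K)+\dim(F_a)-\dim(K\cap F_a)$, the condition $K\cap F_a=\{0\}$ combined with $\mathrm{rank}(M)=r$ is equivalent to $[M\mid B_a]\in\R^{m\times(r+d_a)}$ having maximal column rank $r+d_a$; this is possible because $r+d_a\le r+s\le m$. Maximal rank is equivalent to the non-vanishing of at least one $(r+d_a)\times(r+d_a)$ minor of $[M\mid B_a]$, which is a polynomial condition in the entries of $M$. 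Hence
\[ U_a:=\{M\in\R^{m\times r}~:~\mathrm{rank}(M)=r \text{ and } K\cap F_a=\{0\}\} \]
is Zariski open.

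To see that each $U_a$ is non-empty, extend the columns of $B_a$ to a basis of $\R^m$ by appending $m-d_a\ge r$ further vectors, and let $M$ consist of any $r$ of these appended vectors. Then $\mathrm{col}(M)$ is complementary to a subspace containing $F_a$ inside $\R^m$, so $K\cap F_a=\{0\}$ and $\mathrm{rank}(M)=r$, giving $M\in U_a$.

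Finally, $\R^{m\times r}$ is irreducible (it is affine space), so every non-empty Zariski open subset is Zariski dense, and a finite intersection of non-empty Zariski open subsets of an irreducible variety is again non-empty and Zariski open dense. Therefore
\[ U:=\bigcap_{a\in[T]} U_a \]
is a non-empty Zariski open dense subset of $\R^{m\times r}$, and any $M\in U$ yields a subspace $K=\mathrm{col}(M)$ of dimension $r$ satisfying $K\cap F_a=\{0\}$ for all $a\in[T]$. I do not expect any genuine obstacle here; the only point requiring care is keeping the identification between subspaces and matrices consistent so that "generic" in the Grassmannian sense really follows from "generic" in the matrix parameter space.
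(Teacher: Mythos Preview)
Your proposal is correct and follows the same overall strategy as the paper: parametrize $K$ by an $m\times r$ matrix of column vectors and express $K\cap F_a=\{0\}$ as a polynomial non-vanishing condition in those entries. Two minor technical differences are worth recording. First, the paper enlarges each $F_a$ to an $(m-r)$-dimensional superspace $W_a$ and uses a \emph{single} $m\times m$ determinant $R_a$, whereas you keep $F_a$ at its original dimension $d_a$ and use the maximal-minor criterion on $[M\mid B_a]$; both encode the same rank condition. Second, and more substantively, the paper proves non-emptiness of the intersection $\bigcap_a U_a$ by an explicit inductive construction of $v_1,\ldots,v_r$ avoiding all $W_a$ simultaneously, while you show each $U_a$ is non-empty separately and then invoke irreducibility of $\R^{m\times r}$ to conclude that a finite intersection of non-empty Zariski opens is non-empty. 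Your route is cleaner and more in the spirit of standard algebraic geometry; the paper's is more elementary and constructive. Either is fine.
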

\begin{proof}
	Let $\{v_1, \dots v_r\}$ be a spanning set for $K$, and $\{w_{1,a} \dots w_{M-r,a}\}$ be a linearly independent set of vectors
	such that $F_a \subset \Span\{w_{1,a}, \dots, w_{m-r,a} \}$. Then, $\Span\{v_1,\dots,v_r \}\cap\Span\{w_{1,a}, \dots, w_{m-r,a}\}=\{0\}$ if, and only if, the set $\{v_1,\dots,v_r,w_{1,a},\dots,w_{m-r,a}\}$ is linearly independent. Define $R_a(v_1, \dots v_r)= \det[v_1|\dots v_r|w_{1,a}|\dots w_{m-r,a}]$, and note that $R_a(v_1, \dots v_r)$ is a polynomial in $rm$ variables $v_1(1),\dots,v_1(M), \dots v_r(1),\dots,v_r(m)$. Hence,
	\begin{align*}
		K\cap F_a =\{0\},\quad \forall a \in [N]	
		&\iff		R_a(v_1, \dots v_r) \not=0,\quad \forall\ a \in [N]	\\
		&\iff		\prod_{a=1}^N R_{a}(v_1,\dots,v_r) \not= 0.
	\end{align*}	
	We conclude that \[\mathbb{U} =\left\{(v_1,\dots,v_r):\prod_{a=1}^NR_a(v_1,\dots,v_r) \not=0\right\}\] is an open set with respect to Zariski topology. In order to show that $\mathbb{U}$ is generic we have to find a $\{v_1,\dots,v_r\}$ such that $\prod_a R_a(v_1,\dots,v_r)\not=0$.	
 
 Let $W_a= \Span\{w_{1,a},\dots,w_{m-r,a}\}$. Notice that each $\Span(w_{1,a},\dots,w_{m-r,a}\}$ is a linear subspace of $\R^{m}$ of  dimension $m-r$. 
 If $r \geq 1$, each $W_a$ is a proper  subspace of $\R^m$. 
 
A generic $v_1\in\R^m$ satisfies $v_1 \not= 0$ and $v_1 \notin \cup_{a=1}^N W_a$. Replace each $W_a$ with $W_a^1= \Span(W_a, \{v_1\})$, subspaces of dimension $\dim(W_a^1) = \dim(W_a)+1=m-r+1$. If $\dim(W_a^1) < m$, repeat this process inductively and obtain $v_2, \dots, v_r$  until $\dim(W_a^r) = m$.
	The procedure produces a set of vectors $(v_1,\dots,v_r)$ that satisfy the condition $\prod_a R_a(v_1,\dots,v_r)\not=0$. Hence $\mathbb{U}\neq\emptyset$. This ends the proof of \Cref{lem:generic}.
 \end{proof}

Now we apply this lemma to derive the following corollary for our setup:

\begin{cor}\label{cor2}
Consider the $(N!)^{2p}$ linear maps
 $L_{\pi_1, \dots,\pi_{2p}}:\Vc \times \Vc \to \R^{m}$ introduced before. 
 Then for a generic $\ell:\R^m\rightarrow\R^{2d-d_G}$, 
    \[ ker(\ell)\bigcap \cup_{\pi_1,...,\pi_{2p}\in S_N} Ran(L_{\pi_1, \dots,\pi_{2p}}) = \{0\} \]
\end{cor}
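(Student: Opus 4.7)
The plan is to reduce directly to \Cref{lem:generic} by identifying each linear map $\ell$ with a matrix and translating the condition on $\ker(\ell)$ into polynomial conditions on the matrix entries.

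First I would set up the parameter count. Each linear map $\ell:\R^m\rightarrow\R^{2d-d_G}$ is represented by a matrix $M\in\R^{(2d-d_G)\times m}$, giving a parameter space of dimension $m(2d-d_G)$. Generically $M$ has full row rank $2d-d_G$, so $\ker(\ell)=\ker(M)$ is a subspace of dimension $r:=m-(2d-d_G)$. The family of ``forbidden'' subspaces is $\{F_{\pi}:=\Ran(L_{\pi_1,\dots,\pi_{2p}}):\pi\in S_N^{2p}\}$, a finite collection with cardinality $T=(N!)^{2p}$. As noted just before the corollary, each $F_\pi$ has dimension at most $s:=2d-d_G$, so $r+s=m$ and the hypothesis of \Cref{lem:generic} is satisfied.

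Next I would translate the condition on $\ker(\ell)$ into polynomial inequalities. Fix $\pi\in S_N^{2p}$ and let $k_\pi=\dim F_\pi\leq 2d-d_G$; choose a matrix $P_\pi\in\R^{m\times k_\pi}$ whose columns span $F_\pi$. Then
\[
\ker(M)\cap F_\pi=\{0\}\iff MP_\pi\in\R^{(2d-d_G)\times k_\pi}\text{ has rank }k_\pi,
\]
which holds if and only if some $k_\pi\times k_\pi$ minor of $MP_\pi$ is nonzero. Let $R_\pi(M)$ denote the sum of squares of all such minors (or equivalently $\det((MP_\pi)^T(MP_\pi))$): this is a polynomial in the entries of $M$, and $R_\pi(M)\neq 0$ encodes exactly the desired condition for the index $\pi$. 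Thus the good set
\[
\mathbb{U}=\left\{M\in\R^{(2d-d_G)\times m}\,:\,\prod_{\pi\in S_N^{2p}}R_\pi(M)\neq 0\right\}
\]
is Zariski open in the matrix space.

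To establish that $\mathbb{U}$ is dense (equivalently, non-empty), I would invoke \Cref{lem:generic} with the collection $\{F_\pi\}$, obtaining an $r$-dimensional subspace $K\subset\R^m$ with $K\cap F_\pi=\{0\}$ for every $\pi$. Choose any $M_0$ whose kernel equals $K$ (for instance, take rows of $M_0$ forming a basis of $K^\perp$). Then $M_0\in\mathbb{U}$, so $\mathbb{U}$ is a non-empty Zariski open subset of the matrix space, hence open and dense. This proves the corollary.

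The only mildly delicate point is matching the two notions of genericity: \Cref{lem:generic} gives genericity in a choice of spanning vectors for $K$, whereas \Cref{cor2} asks for genericity in $\ell$ itself. The bridge is straightforward since the map $M\mapsto\ker(M)$ is a surjection from the full-rank stratum of matrices onto the Grassmannian $\mathrm{Gr}(r,m)$, and the polynomials $R_\pi(M)$ computed above already give the condition directly at the level of matrices, bypassing any explicit Grassmannian argument.
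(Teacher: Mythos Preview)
Your argument is correct and follows the paper's approach: both reduce directly to \Cref{lem:generic} with $r=m-(2d-d_G)$, $s=2d-d_G$, and $T=(N!)^{2p}$, with your version being more explicit than the paper about transferring genericity from subspaces (spanning vectors for $K$) to matrices representing $\ell$. The only omission is the trivial case $m\leq 2d-d_G$, where a generic $\ell$ is injective so $\ker(\ell)=\{0\}$; the paper disposes of this in one line before assuming $m>2d-d_G$.
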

\begin{proof}
If $m\leq 2d-d_G$ then the conclusion is satisfied for any full-rank $\ell$. Therefore assume $m>2d-d_G$.
 A  generic linear map $\ell: \R^{m} \to \R^{2d-D_G}$ is full-rank. Hence
 $\dim(\Ran(\ell))=2d-d_G$, and thus $\dim(\ker(\ell))= m-2d+d_G$. On the other hand, for a generic linear map $\ell$ 
 \Cref{lem:generic} with $r=m-2d+d_G$, $s=2d-d_G$, and $T=(N!)^{2p}$,
  implies
\[\ker(\ell) \cap \Ran(L_{\pi_1, \dots \pi_{2p}}) = \{0\} \]
for every $\pi_1,\ldots,\pi_{2p}\in S_N$.
\end{proof}

\begin{lem}\label{lem4.7}
Let $\{F_a\}_{a=1}^T$ be a finite collection of $r$-dimensional subspaces of $\R^m$, and $\ell: \R^m \to \R^s$ be a full-rank linear transformation with $m\geq s$. Let $Q_a$ denote the orthogonal projection onto the linear space $F_a$ and $Q_{\ell}$ denote the orthogonal projection onto 
 $\ker{\ell}$. Let $c_{a,\ell}=(1-\norm{Q_a Q_{\ell}}^2)^{1/2}$, and $c_{\ell}=\min_{a \in [T]} c_{a,\ell}$. Here $\norm{Q_a Q_{\ell}}$ denotes the 
 operator norm of $Q_aQ_\ell$, i.e., its largest singular value. 
Set $F=\cup_{a=1}^T F_a$. 
Suppose that $\ker(\ell) \cap F = \{0\}$.
 Then 
\begin{equation}
    \inf_{\substack{x \in F \\ \norm{x}=1}}\norm{\ell(x)}\geq c_{\ell} \sigma_{s}(\ell),
    \end{equation}
where $\sigma_{s}(\ell)$ is the smallest strictly positive singular value of $\ell$ (it is the $s^{th}$ singular value).
\end{lem}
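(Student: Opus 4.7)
The plan is to bound $\norm{\ell(x)}$ from below on each subspace $F_a$ separately and then minimize over $a\in[T]$. Fix $a$ and a unit vector $x\in F_a$, and decompose it orthogonally relative to $\ker(\ell)$ as
\[ x = Q_\ell x + (I-Q_\ell)x, \qquad (I-Q_\ell)x\in \ker(\ell)^\perp. \]
Since $\ell$ annihilates $\ker(\ell)$ and its restriction to $\ker(\ell)^\perp$ is injective with smallest strictly positive singular value $\sigma_s(\ell)$, I immediately get
\[ \norm{\ell(x)} = \norm{\ell((I-Q_\ell)x)} \geq \sigma_s(\ell)\,\norm{(I-Q_\ell)x}. \]

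The next step is to control $\norm{(I-Q_\ell)x}^2 = 1 - \norm{Q_\ell x}^2$ from below using the hypothesis $x\in F_a$. Because $x = Q_a x$, I rewrite $Q_\ell x = Q_\ell Q_a x$ and bound
\[ \norm{Q_\ell x} \leq \norm{Q_\ell Q_a}\cdot \norm{x} = \norm{Q_a Q_\ell}, \]
using that $Q_\ell Q_a$ and $Q_a Q_\ell$ are mutual adjoints and thus have equal operator norm. Substituting yields $\norm{(I-Q_\ell)x}^2 \geq 1 - \norm{Q_a Q_\ell}^2 = c_{a,\ell}^2$, hence $\norm{\ell(x)} \geq c_{a,\ell}\,\sigma_s(\ell)$. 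Taking the infimum over unit $x\in F_a$ and then the minimum over $a$ delivers the stated bound with $c_\ell = \min_a c_{a,\ell}$.

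The only subtlety is to ensure that the constant $c_\ell$ is strictly positive, i.e., $\norm{Q_a Q_\ell}<1$ for every $a$. I would argue by contradiction. If $\norm{Q_a Q_\ell}=1$, compactness of the unit sphere produces a unit vector $v$ with $\norm{Q_a Q_\ell v}=1$; setting $w = Q_\ell v$, the chain $1 = \norm{Q_a w}\leq \norm{w}\leq \norm{v}=1$ forces $\norm{w}=1$ and $\norm{Q_a w} = \norm{w}$, and the equality case of the projection inequality (namely $\norm{Q_a w}^2 + \norm{(I-Q_a)w}^2 = \norm{w}^2$) then gives $Q_a w = w$. Thus $w\in F_a\cap\ker(\ell)$ is a unit vector, contradicting the assumption $\ker(\ell)\cap F=\{0\}$. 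This extraction of an honest nonzero element of $F_a\cap\ker(\ell)$ from the spectral condition $\norm{Q_a Q_\ell}=1$ is the step most prone to slips, but it is the standard fact that $\norm{Q_a Q_\ell}$ is the cosine of the smallest principal angle between $F_a$ and $\ker(\ell)$.
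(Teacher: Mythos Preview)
Your argument is correct and essentially identical to the paper's: the paper picks a minimizer $y_\infty\in F_a$ by compactness, expands it in the right singular vectors of $\ell$, and bounds $\sum_{k\le s}\gamma_k^2 = 1-\norm{Q_\ell y_\infty}^2\ge 1-\norm{Q_aQ_\ell}^2$ exactly as you do via $Q_a y_\infty=y_\infty$; your orthogonal decomposition $x=Q_\ell x+(I-Q_\ell)x$ and the bound $\norm{\ell((I-Q_\ell)x)}\ge\sigma_s(\ell)\norm{(I-Q_\ell)x}$ are just the coordinate-free rephrasing of that SVD computation. Your final paragraph verifying $c_\ell>0$ from the hypothesis $\ker(\ell)\cap F=\{0\}$ is a useful addition that the paper leaves implicit.
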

\begin{proof}
	Notice that for each $a \in [T]$, the unit sphere of $F_a$ is a compact set. Thus
	\[\inf_{\substack{x \in F \\ \norm{x}=1}}\norm{\ell(x)}=\min_{\substack{x \in F \\ \norm{x}=1}}\norm{\ell(x)}=\norm{\ell(y_{\infty})}\]
	for some $y_{\infty} \in F_a \cap S^1(\R^m)$. 
	Let $y_{\infty}= \sum_{k=1}^m \gamma_k u_k$, where $u_j$ are the normalized right singular vectors of $\ell$ sorted by singular values $\sigma_1\geq \sigma_2\geq \cdots\geq \sigma_q > \sigma_{s+1}=\cdots=\sigma_m=0$. Notice that 
 $\sum_{k=1}^m \gamma_k^2=1$ and 
 $\sum_{k=1}^{s} \gamma_k^2 = 1-\norm{Q_\ell y_{\infty}}^2 \ge 1- \norm{Q_{a} Q_{\ell}}^2 \geq c_\ell^2$. Thus 
	\begin{align*}
		\norm{\ell(y_{\infty})}^2&= \norm{\sum_{k=1}^m \gamma_k \ell(u_k)}^2= \norm{\sum_{k=1}^s \gamma_{k} \ell(u_k)}^2\\& = \sum_{k=1}^s \gamma_k^2 \sigma_k^2 \geq c_{a,\ell}^2 \sigma_s(\ell)^2 \geq c_{\ell}^2 \sigma_s(\ell)^2
	\end{align*}
 which proves this Lemma.
\end{proof}

\begin{proof}[{\bf Proof \Cref{dimred}}]

Assume without loss of generality that $m\geq 2d-d_G$.

\Cref{cor2} shows that, a generic linear map $\ell:\R^m\rightarrow\R^{2d-d_G}$ satisfies $\ker(\ell)\cap \Ran(D) = \{0\}$. Thus, if $x,y\in\Vc$ so that $\Psi_{\bw,S,\ell}(x)=\Psi_{\bw,S,\ell}(y)$ then $\ell(D(x,y))=0$. Therefore $D(x,y)=0$. Since $\hat{\Phi}_{\bw,S}$ is injective it follows $x\sim y$.  Thus, $\hat{\Psi}_{\bw,S,\ell}$ is also injective.
 
From \Cref{theormainstab} we have that, if the map  $\hat{\Phi}_{\bw,S}$ is injective then it is also bi-Lipschitz. Let $a\leq b$ denote its bi-Lipschitz constants.
 
 Compositions of two Lipschitz maps is Lipschitz, hence $\Psi_{\bw,S,\ell}$ is Lipschitz. Furthermore, an upper Lipschitz constant of $\Psi_{\bw,S,\ell}$ is $\norm{\ell}b$, where $\norm{\ell}=\sigma_1(\ell)$ is the largest singular value of $\ell$.
 
 Finally from \Cref{cor2} and \Cref{lem4.7} with $r=m-2d+d_G$, $s=2d-d_G$, $T=(N!)^{2p}$, $F_a=Ran(L_{\pi_1, \dots,\pi_{2p}})$ we have that for a generic linear map $\ell$,  for all $x,y\in \Vc$,
 \begin{align*}
\norm{\Psi_{\bw,S,\ell}(x)-\Psi_{\bw,S,\ell}(y)} &= \norm{\ell(D(x,y))}\geq \\c_\ell \sigma_{2d-d_G}(\ell) \norm{D(x,y)}&\geq c_{\ell}\sigma_{2d-dG}(\ell)a \dis([x],[y])
 \end{align*} 
 where $a$ is the lower Lipschitz constant of $\hat{\Phi}_{\bw,S}$. Therefore the map $\hat{\Psi}_{\bw,S,\ell}$ is bi-Lipschitz with a lower Lipschitz constant $c_{\ell}\sigma_{2d-d_G}(\ell)a$.
\end{proof}

\begin{rem}
We proved that if $\hat{\Phi}_{\bw,S}$ is injective then for almost any linear map $\ell:\R^m\rightarrow\R^{2d}$,  $\hat{\Psi}_{\bw,S,\ell}$ is bi-Lipschitz. It remained an open question whether for any such nonlinear embedding $\Psi_{\bw,S,\ell}$, injectivity implies bi-Lipschitz.
However we settle this question positively into an upcoming joint paper with Matthias Wellershoff. Notice that, 
in general, if the map $f:X\rightarrow Y$ is bi-Lipschitz and the linear map $\ell:Y\rightarrow\R^q$ is so that $\ell\circ f$ is injective, then $\ell\circ f$ may not be bi-Lipschitz. Example: $f:\R\rightarrow\R^2$, $f(t)=(t,t^3)$, $\ell:\R^2\rightarrow\R$, $\ell(x,y)=y$.  
\end{rem}

\section{Universality of representation}

In previous sections we constructed the embedding  $\hat{\Phi}_{\bw,S}$ of the quotient space $\hat{\Vc}$ into an Euclidean space $\R^m$. In this section we prove \Cref{LIPTHEOR} and \Cref{cont} that show that when 
$\hat{\Phi}_{\bw,S}$ is injective, every continous or Lipschitz map   $f:\hat{\Vc} \to H$ factors through $\hat{\Phi}_{\bw,S}$.

The proof of \Cref{LIPTHEOR} is based on Kirszbraun extension theorem \cite{kirszbraun1934zusammenziehende} which is re-stated here for the reader's convenience:

\begin{theor}[Kirszbraun extension theorem\cite{kirszbraun1934zusammenziehende}]\label{Kirszbraun}
Let $E\subset H_1$ be an arbitrary subset of a Hilbert space $H_1$ and $f:E \mapsto H_2$ be a Lipschitz function to another Hilbert space $H_2$. Then there exists an extension $F:H_1\rightarrow H_2$ of $f$ to the entire space $H_1$ that has the same Lipschitz constant as the original function $f$.
\end{theor}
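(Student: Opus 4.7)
The plan is to follow the classical three-layer approach. After rescaling the norm on $H_2$ by $1/\Lip(f)$, I may assume the Lipschitz constant equals $1$. First, I would apply Zorn's lemma to the poset $\mathcal{P}$ of all $1$-Lipschitz extensions $(E', f')$ with $E\subseteq E'\subseteq H_1$, ordered by extension; every chain admits an upper bound (the union of graphs), so $\mathcal{P}$ has a maximal element $(E^*, f^*)$. If $E^* = H_1$ we are done; otherwise I would fix any $p \in H_1 \setminus E^*$ and contradict maximality by extending $f^*$ to $E^* \cup \{p\}$.

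The one-point extension at $p$ reduces to producing $y \in H_2$ with $\|y - f^*(x)\| \leq \|p - x\|$ for every $x \in E^*$; equivalently,
\[ \bigcap_{x \in E^*} \bar{B}\bigl(f^*(x),\, \|p - x\|\bigr) \neq \emptyset. \]
Since each closed ball in $H_2$ is weakly compact and convex, by the finite intersection property for weakly compact sets it suffices to verify non-emptiness for every finite subfamily. This collapses the problem to a purely finite statement.

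The core finite statement is Kirszbraun's intersection lemma: given $x_1,\ldots,x_n \in H_1$, $y_1,\ldots,y_n \in H_2$ with $\|y_i - y_j\| \leq \|x_i - x_j\|$ for all $i,j$, and radii $r_i > 0$ with $p^* \in \bigcap_i \bar{B}(x_i, r_i)$, one has $\bigcap_i \bar{B}(y_i, r_i) \neq \emptyset$. I would prove this by minimizing the coercive, convex function $\rho(y) = \max_i \|y - y_i\|/r_i$ over $H_2$, obtaining a minimizer $y^*$ with optimal value $\rho^* \geq 0$. Suppose for contradiction $\rho^* > 1$. KKT at $y^*$ over the active set $I^* = \{i : \|y^* - y_i\| = \rho^* r_i\}$ yields convex weights $\{\lambda_i\}_{i \in I^*}$ with $y^* = \sum_{i \in I^*} \lambda_i y_i$. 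Setting $\tilde{x} = \sum_{i \in I^*} \lambda_i x_i$ and applying the Hilbert-space identity $\sum_i \lambda_i \|v_i - z\|^2 = \sum_i \lambda_i \|v_i - \bar{v}\|^2 + \|\bar{v} - z\|^2$ (with $\bar{v} = \sum_k \lambda_k v_k$) together with $\sum_{i,j}\lambda_i\lambda_j\|v_i-v_j\|^2 = 2\sum_i\lambda_i\|v_i-\bar v\|^2$ on both sides, the chain
\[ (\rho^*)^2 \sum_i \lambda_i r_i^2 = \sum_i \lambda_i \|y_i - y^*\|^2 \leq \sum_i \lambda_i \|x_i - \tilde{x}\|^2 \leq \sum_i \lambda_i \|x_i - p^*\|^2 \leq \sum_i \lambda_i r_i^2 \]
forces $\rho^* \leq 1$, a contradiction. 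The middle inequality uses the contraction hypothesis summed as $\sum_{i,j}\lambda_i\lambda_j\|y_i-y_j\|^2 \leq \sum_{i,j}\lambda_i\lambda_j\|x_i-x_j\|^2$.

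The main obstacle is this finite intersection lemma: it is the sole place where the parallelogram identity (and hence the Hilbert-space structure) is genuinely used, and indeed Kirszbraun's theorem is known to fail in general Banach spaces. The Zorn and weak-compactness layers above are organizational and become routine once the finite lemma is in hand.
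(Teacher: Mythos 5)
The paper does not prove this statement at all: Kirszbraun's theorem is imported as a classical result, restated only for the reader's convenience with a citation to the original 1934 paper, and then used as a black box in the proof of Theorem 1.4. So there is no proof in the paper to compare against; what you have written is a self-contained proof of the cited result. Your argument is the standard and correct one: Zorn's lemma on the poset of $1$-Lipschitz partial extensions (the union over a chain is indeed $1$-Lipschitz, since any two points of the union lie in a common member of the chain), reduction of the one-point extension to the non-emptiness of $\bigcap_{x}\bar B(f^*(x),\|p-x\|)$, reduction to finite subfamilies via weak compactness of closed balls, and finally Kirszbraun's finite intersection lemma proved by minimizing $\rho(y)=\max_i\|y-y_i\|/r_i$ and combining the subdifferential condition $y^*=\sum_{i\in I^*}\lambda_i y_i$ with the variance identity and the contraction hypothesis. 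The chain of inequalities you display is exactly right and does force $\rho^*\le 1$. Two routine details are worth making explicit in a full write-up: attainment of the minimizer of $\rho$ in an infinite-dimensional $H_2$ (restrict to the finite-dimensional affine hull of $y_1,\dots,y_n$, or invoke weak lower semicontinuity plus coercivity), and the fact that for active indices $\|y^*-y_i\|=\rho^* r_i>0$, so the norm terms entering the subdifferential are differentiable there. Neither is a gap. Your closing remark correctly identifies the finite lemma as the only place where Hilbert-space geometry is essential.
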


\begin{proof}[Proof of \Cref{LIPTHEOR}]
\mbox{}

\begin{enumerate}
    \item   Let $t:\Phi_{\bw,S}(\Vc) \mapsto H$ be defined by $t(\Phi_{\bw,S}(x))=F([x])$. Denote $u=\Phi_{\bw,S}(x)$ and $ v=\Phi_{\bw,S}(y)$.
    Then, \begin{align*}
        \norm{t(u)-t(v)}= \norm{F([x])-F([y])} \le \Lip(F) \dis([x],[y]) \le \frac{1}{a} \Lip(F) \norm{u-v}.
    \end{align*}
    By Kirszbraun extension theorem we have that there exists $ T:\R^m \to H$, such that
    \begin{enumerate}
        \item $T\vert_{\Phi_{\bw,S}(\Vc)} =t$  
        \item $\Lip(T)= \Lip(t)$
    \end{enumerate}
    Therefore,  $F=T \circ \Phi_{\bw,S}$ and $\Lip(T)\le \frac{1}{a} \Lip(F)$. 
    
   \item Part 2 is straightforward. Let $x,y \in \Vc$ then 
   \begin{align*}
       \norm{F(x)-F(y)} &= \norm{T\circ \Phi_{\bw,S}(x)-T\circ \Phi_{\bw,S}(y)}\\
       &\le \Lip(T) \norm{\Phi_{\bw,S}(x)-\Phi_{\bw,S}(y)} \\ &\le \Lip(T) a \norm{x-y}.
   \end{align*}
   
\end{enumerate}
\end{proof}

The second universality result \Cref{cont} 
applies to the class of continuous functions (instead of Lipschitz functions).


The proof of \Cref{cont} follows from the following extension  
of Tietze's theorem  \cite{urysohn1925machtigkeit,dugundji1951extension}.

\begin{theor}[Dugundji-Tietze \cite{dugundji1951extension}]\label{tietze}
 Let $X$ be a metric space and $A$ a closed subset of $X$. Let $L$ be a locally convex topological vector space. Given $f:A \mapsto L$ a continuous map, there exists a continuous extension 
 $F: X \mapsto L$ such as $F(X)$ is a subset of the convex hull  of $f(A)$.
\end{theor}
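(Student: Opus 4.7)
The plan is to follow Dugundji's classical construction: extend $f$ off $A$ by a locally finite convex combination of values $f(a_\alpha)$, where the points $a_\alpha \in A$ are chosen to be close to the corresponding pieces of a partition of unity on $X \setminus A$. Local convexity of $L$ will be essential both for the convex-hull conclusion and, more critically, for verifying continuity at boundary points of $A$.

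First, I would note that since $A$ is closed, the complement $X \setminus A$ is open, and being an open subset of a metric space it is paracompact. For each $x \in X \setminus A$, set $r(x) = \tfrac{1}{2} d(x, A) > 0$ and consider the open cover $\{B(x, r(x))\}_{x \in X \setminus A}$ of $X \setminus A$. By paracompactness, extract a locally finite open refinement $\{U_\alpha\}_{\alpha \in I}$; for each $\alpha$, fix a center $x_\alpha$ with $U_\alpha \subset B(x_\alpha, r(x_\alpha))$ and pick $a_\alpha \in A$ satisfying $d(x_\alpha, a_\alpha) \le 2\, d(x_\alpha, A)$. Let $\{\phi_\alpha\}_{\alpha \in I}$ be a continuous partition of unity subordinate to $\{U_\alpha\}$ (available on any paracompact space). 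Define
\[
F(x) = \begin{cases} f(x), & x \in A, \\ \sum_{\alpha \in I} \phi_\alpha(x)\, f(a_\alpha), & x \in X \setminus A. \end{cases}
\]
Continuity of $F$ on $X \setminus A$ is immediate, since near any such point only finitely many $\phi_\alpha$ are nonzero, and $F$ is locally a finite sum of scalar continuous functions times constant vectors in $L$. The convex-hull conclusion is also immediate: each $F(x)$ for $x \notin A$ is by construction a convex combination of elements of $f(A)$, and $F(A)=f(A)$, so $F(X) \subset \mathrm{conv}(f(A))$.

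The main obstacle is continuity at a point $x_0 \in A$. Given any convex balanced neighborhood $W$ of $0$ in $L$ (which exists because $L$ is locally convex), continuity of $f$ at $x_0$ yields $\delta > 0$ such that $f(a) - f(x_0) \in W$ whenever $a \in A$ with $d(a, x_0) < \delta$. The key geometric lemma to establish is: there exists $\varepsilon > 0$ such that if $x \in X \setminus A$ with $d(x, x_0) < \varepsilon$ and $\phi_\alpha(x) \neq 0$, then $d(a_\alpha, x_0) < \delta$. This follows by chaining the estimates $d(a_\alpha, x_\alpha) \le 2\, d(x_\alpha, A)$, $d(x_\alpha, x) < r(x_\alpha) = \tfrac{1}{2} d(x_\alpha, A)$, and $d(x_\alpha, A) \le d(x_\alpha, x_0) \le d(x_\alpha, x) + d(x, x_0)$, giving a bound $d(a_\alpha, x_0) \le C\, d(x, x_0)$ for an absolute constant $C$. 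Once this holds, for such $x$ we can write
\[
F(x) - f(x_0) = \sum_{\alpha \in I} \phi_\alpha(x)\bigl(f(a_\alpha) - f(x_0)\bigr),
\]
which is a convex combination of elements of $W$, hence lies in $W$ by convexity of $W$. This proves $F(x) \to f(x_0)$ and completes continuity. The hard part, and the only place the hypothesis on $L$ is truly used, is exactly this step: the control of a convex combination by control of its summands, which requires a convex neighborhood basis at $0$.
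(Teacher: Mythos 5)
Your proposal is correct, and it is precisely Dugundji's classical construction (locally finite refinement of the balls $B(x,\tfrac12 d(x,A))$, nearby anchor points $a_\alpha\in A$, partition of unity, and the convex-neighborhood estimate at boundary points of $A$); the chaining bound $d(a_\alpha,x_0)\le C\,d(x,x_0)$ does go through with, e.g., $C=6$. The paper itself gives no proof of this theorem --- it is quoted as a known result with a citation to \cite{dugundji1951extension} --- so your argument coincides with the standard proof being referenced.
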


\begin{proof}[Proof of \Cref{cont}]  

Let $S= \Phi_{\bw,S}(\Vc)\subset\R^m$. Note that $S$ is a closed set as $\hat{\Phi}_{\bw,S}$ is bi-Lipschitz and $\hat{\Vc}$ is complete. Let $t:S \mapsto L$ be defined by $t(\Phi_{\bw,S}(x))= F(x)$, for all $x\in\Vc$. Note $t$ is continuous since $\hat{\Phi}_{\bw,S}$ is bi-Lipschitz on $\hat{\Vc}$. 
By \Cref{tietze} there exists a continuous extension $T:\R^m\rightarrow L$ of $t$ that satisfies the convex hull property.

\ignore{

Note that, $\Phi_{\bw,S}^{-1}: S \mapsto \Vc$ is also continuous and that 
\[\dis([t(u)],[t(v)])=\dis([F(x)],[F(y)]).\] 

\begin{lem}
\begin{enumerate}
    \item Assume that $\hat{F}$ is continuous. Then $F: \Vc \mapsto \R^D$, where $F([x])=\hat{F}([x])$ is continuous on $(V,\ip{\cdot}{\cdot})$.
    \item  Let $F:\Vc \mapsto \R^m$, such that $F$ is continuous and $G$-invariant.
    Then $\hat{F}:\hat{V} \mapsto \R^m$, where $\hat{F}([x])=F(x)$ is continuous.
\end{enumerate}
\end{lem}

\begin{proof}
    Let $\Pi: \Vc \mapsto \hat{\Vc}$, $\Pi(x)=[x]$.
   Notice that $\dis([\Pi(x)],[\Pi(x)])=\min_{\substack{u \in [x]\\v \in [y]}}\norm{u-v} \le \norm{x-y}$.
\begin{enumerate}
    \item  $\Pi$ is continuous and $F=\hat{F} \circ \Pi$ therefore, $F$ is continuous.
    \item Notice that because $G$ is compact we can extract a convergent subsequence such that $\tilde{y}_n \in [y_n]$ such that $\tilde{y}_n \to x$.
 \end{enumerate} 
\end{proof}
}

\end{proof}
\printbibliography
\end{document}